\documentclass[12pt, reqno]{amsart}
\usepackage{amsmath,amssymb,verbatim,comment,color}
\usepackage[pagebackref,pdftex,hyperindex]{hyperref}
\usepackage[margin=3cm]{geometry}
\usepackage{url}
\usepackage{color}
%%%%%%%%%%%%%%%%%%%%%%%%%%%%%%%%%%%%%%%%%%%%%%%%%%%
%=====Alphabet==============

\newcommand{\C}{\mathbb{C}}

\newcommand{\N}{\mathbb{N}}

\renewcommand{\P}{\mathbb{P}}
\newcommand{\Q}{\mathbb{Q}}
\newcommand{\R}{\mathbb{R}}
\renewcommand{\S}{\mathbb{S}}

\newcommand{\fu}{\mathfrak{u}}

\newcommand{\cD}{\mathcal{D}}

\newcommand{\cF}{\mathcal{F}}

\newcommand{\cH}{\mathcal{H}}

\newcommand{\cJ}{\mathcal{J}}

\newcommand{\cL}{\mathcal{L}}

\newcommand{\cO}{\mathcal{O}}
\newcommand{\cP}{\mathcal{P}}

\newcommand{\cR}{\mathcal{R}}

\newcommand{\cX}{\mathcal{X}}

\renewcommand{\a}{\alpha}
\renewcommand{\b}{\beta}
\newcommand{\g}{\gamma}
\renewcommand{\d}{\delta}
\newcommand{\e}{\varepsilon}

\renewcommand{\r}{\rho}

\newcommand{\bv}{\boldsymbol v}
\newcommand{\bw}{\boldsymbol w}
\newcommand{\bA}{\boldsymbol A}

\newcommand{\bH}{\boldsymbol H}

\newcommand{\bet}{\boldsymbol \eta}
\newcommand{\bmu}{\boldsymbol \mu}
\newcommand{\bnu}{\boldsymbol \nu}
\newcommand{\bphi}{\boldsymbol \phi}

\newcommand{\bomega}{\boldsymbol \omega}

%========Operators=====================

\newcommand{\p}{\partial}
\newcommand{\bp}{\bar{\partial}}
\newcommand{\dd}{\sqrt{-1}\partial \bar{\partial}}
\newcommand{\ddt}{\frac{d}{dt}}
%========Ellipsis=======================
 
\newcommand{\cf}{{\rm cf.\ }} 
\newcommand{\eg}{{\rm e.g.\ }} 
\newcommand{\ie}{{\rm i.e.\ }} 

%========Capital======================

\DeclareMathOperator{\AM}{AM}
\DeclareMathOperator{\Aut}{Aut}

\DeclareMathOperator{\Chow}{Chow}
\DeclareMathOperator{\CKE}{CKE}
\DeclareMathOperator{\diag}{diag}
\DeclareMathOperator{\dist}{dist}
\DeclareMathOperator{\DF}{DF}
\DeclareMathOperator{\Fut}{Fut}

\DeclareMathOperator{\FS}{FS}
\DeclareMathOperator{\aFS}{{\rm FS}^{\rm amb}}
\DeclareMathOperator{\bFS}{\tilde{FS}}
\DeclareMathOperator{\GL}{GL}

\DeclareMathOperator{\Hilb}{Hilb}
\DeclareMathOperator{\bHilb}{\tilde{Hilb}}

\DeclareMathOperator{\Id}{Id}

\DeclareMathOperator{\Ker}{Ker}
\DeclareMathOperator{\Lie}{Lie}
\DeclareMathOperator{\MA}{MA}
\DeclareMathOperator{\bMA}{\bf{MA}}
\DeclareMathOperator{\op}{op}

\DeclareMathOperator{\PSH}{PSH}
\DeclareMathOperator{\osc}{osc}

\DeclareMathOperator{\Ric}{Ric}

\DeclareMathOperator{\Tr}{Tr}

%=========Others========================
\renewcommand{\leq}{\leqslant}
\renewcommand{\geq}{\geqslant}

\renewcommand{\hat}{\widehat}
\renewcommand{\tilde}{\widetilde}
\newcommand{\elm}[1]{\overset{\vee}{#1}}

\numberwithin{equation}{section}       % Number formulas within sections

%======Theorems====================
\newtheorem{prop} {Proposition} [section]
\newtheorem{thm}[prop] {Theorem} 
\newtheorem{dfn}[prop] {Definition}
\newtheorem{lem}[prop] {Lemma}

\newtheorem{rem}[prop]{Remark}
\theoremstyle{remark}
\newtheorem*{ackn}{\bf{Acknowledgment}}

%=========Author Information=================
\title[Geometric quantization of coupled K\"ahler-Einstein metrics]{Geometric quantization of \\coupled K\"ahler-Einstein metrics}
\date{\today}

\author[R. Takahashi]{Ryosuke Takahashi}
\address{Research Institute for Mathematical Sciences\\
 Kyoto University\\
 Kyoto 606-8502\\
 JAPAN}
\email{tryosuke@kurims.kyoto-u.ac.jp}

\subjclass[2010]{Primary 53C55; Secondary 14L24}
\keywords{coupled K\"ahler-Einstein metric, geometric quantization, balanced metric}
\begin{document}

\maketitle
\begin{abstract}
We study the quantization of coupled K\"ahler-Einstein (CKE) metrics, namely we approximate CKE metrics by means of the canonical Bergman metrics, so called the ``balanced metrics''. We prove the existence and weak convergence of balanced metrics for the negative first Chern class, while for the positive first Chern class, we introduce some algebro-geometric obstruction which interpolates between the Donaldson-Futaki invariant and Chow weight. Then we show the existence and weak convergence of balanced metrics on CKE manifolds under the vanishing of this obstruction. Moreover, restricted to the case when the automorphism group is discrete, we also discuss approximate solutions and a gradient flow method towards the smooth convergence.
\end{abstract}
\tableofcontents
%==============Section 1==========================
\section{Introduction}
Let $X$ be an $n$ dimensional compact K\"ahler manifold with $\lambda c_1(X)>0$ ($\lambda=\pm 1$). In K\"ahler geometry, the problem of finding canonical metrics is one of the most active topics. In particular, the K\"ahler-Einstein condition
\[
\Ric(\omega)=\lambda \omega
\]
has been studied extensively in the past decades. The existence of K\"ahler-Einstein metrics is strongly related to a certain subtle properties in algebraic geometry. Actually, the resolution to Yau-Tian-Donaldson conjecture \cite{CDS15a, CDS15b, CDS15c} states that a Fano manifold admits a K\"ahler-Einstein metric if and only if it is K-polystable, an algebro-geometric notion modeled on Geometric Invariant Theory (GIT). The resolution of this conjecture enables us to construct new examples of K\"ahler-Einstein metrics \cite{Del16, Sus13} and their good moduli spaces \cite{Oda15}.

However, there are many Fano manifolds which do not satisfy the stability condition. For this reason, several generalizations of K\"ahler-Einstein metrics have been studied. In this paper, we will focus on the ``coupled K\"ahler-Einstein metrics'' introduced  by Hultgren-Witt Nystr\"om \cite{HN17}: for any $N \in \N$, an $N$-tuple of K\"ahler classes $(\Omega_{[i]})$ is called a decomposition of $2\pi \lambda c_1(X)$ if
\[
2\pi \lambda c_1(X)=\sum_{i=1}^N \Omega_{[i]}.
\]
Then an $N$-tuple of K\"ahler metrics $\bomega=(\omega_{[i]}) \in \prod_{i=1}^N \Omega_{[i]}$ is called {\it coupled K\"ahler-Einstein} (CKE) if it satisfies
\[
\Ric(\omega_{[i]})=\lambda \sum_{j=1}^N \omega_{[j]}, \quad i=1,\ldots, N.
\]
Note that if $N=1$, then $\omega_{[1]}$ is CKE if and only if $\omega_{[1]}$ is K\"ahler-Einstein. Hultgren \cite{Hul17} studied an example of Fano $4$-folds discovered by Futaki \cite{Fut83} which does not admit any K\"ahler-Einstein metrics, and showed that in a special case of the decomposition for $2\pi \lambda c_1(X)$, there exists a CKE metrics on this manifold. Thereafter this example is further studied in \cite{FZ19}. CKE metrics has been studied extensively in recent years \cite{Del19, DH18, DP19, FZ18,  FZ19, Hul17, Pin18, Tak19}. One of the motivation for CKE metrics comes from stabilities. Indeed, Datar-Pingali \cite{DP19} recently showed that CKE metrics have an infinite dimensional moment map/GIT interpretation which interpolates between polarized manifolds and holomorphic vector bundles over them. This result strongly indicates a potential application to construct their good moduli space. Such kind of interpolation also can be found in other frameworks \cite{CFP13, CY18}.

The main issue of this paper is to consider the quantization of CKE metrics in the sense of Donaldson \cite{Don01}: now assume that we have a {\it rational decomposition} of $K_X^{\otimes -\lambda}$, \ie there exist an $N$-tuple of $\Q$-ample line bundles $(L_{[i]})$ over $X$ such that
\[
K_X^{-\otimes \lambda} \simeq \otimes_{i=1}^N L_{[i]}.
\]
In particular, this gives a decomposition of $2\pi \lambda c_1(X)$ by setting $\Omega_{[i]}:=2\pi c_1(L_{[i]})$.

We establish a precise correspondence between CKE metrics and the large $k$ asymptotics of the sequences of $N$ projective embeddings:
\[
X^N \hookrightarrow \P^\ast(H^0(X,L_{[1]}^{\otimes k})) \times \cdots \times \P^\ast(H^0(X,L_{[N]}^{\otimes k})).
\] 
In our framework, a {\it balanced metric} is an $N$-tuple of Bergman metrics governed by the system of equations which reflects their mutual interaction, more precisely, arises as a unique fixed point modulo automorphisms, of the {\it balancing flow}:
\begin{equation} \label{balflow}
\begin{cases}
\ddt H_{[1]}(t)=k(\Id-D_{[1]}^{(k)} \overline{M}_{[1], \bmu}(\bH(t))) \\
\hspace{20mm} \vdots \\
\ddt H_{[N]}(t)=k(\Id-D_{[N]}^{(k)} \overline{M}_{[N], \bmu}(\bH(t)))
\end{cases}
\end{equation}
for $N$-tuple of Hermitian matrices $\bH(t)=(H_{[i]}(t))$. Where $D_{[i]}^{(k)}$ denotes the dimension of $H^0(X, L_{[i]}^{\otimes k})$ and the matrix $\overline{M}_{[i], \bmu}(\bH(t)))$ is defined by the integration of the moment map for the standard $H_{[i]}(t)$-unitary action on $\P^\ast(H^0(X, L_{[i]}^{\otimes k}))$. The definition of \eqref{balflow} is motivated by another flow on the space of K\"ahler potentials as follows: by using the Berezin-Toeplitz quantization as in \cite{Fin10, Tak18}, one can easily see that the balancing flow \eqref{balflow} is the quantization of the {\it coupled inverse Monge-Amp\`ere flow}:
\begin{equation} \label{imf}
\begin{cases}
\ddt \phi_{[1]}(t)=1-e^{\rho_{[1]}(t)} \\
\hspace{20mm} \vdots \\
\ddt \phi_{[N]}(t)=1-e^{\rho_{[N]}(t)},
\end{cases}
\end{equation}
where $\rho_{[i]}(t)$ denotes the coupled Ricci potential for the evolving metric $\phi_{[i]}(t)$ defined by
\[
\Ric(\omega_{\phi_{[i]}(t)})-\lambda \sum_j \omega_{\phi_{[j]}(t)}=\dd \rho_{[i]}(t), \quad \int_X e^{\rho_{[i]}(t)} \omega_{\phi_{[i]}(t)}^n=(2\pi L_{[i]})^n.
\]
The coupled flow \eqref{imf} is clearly parabolic and short-time existence immediately follows form general theory. Moreover, when $N=1$, the long-time existence as well as some convergence results of \eqref{imf} have been established very recently by Collins, Hisamoto and the author \cite{CHT17}. We hope that the flow \eqref{imf} also exists for all positive time and provides a powerful way to construct CKE metrics.
%==============Subsection 1.1==========================
\subsection{Main results}
It is known that there always exists a unique CKE metric in the $\lambda=-1$ case \cite{HN17}. Correspondingly, we can prove the existence of balanced metrics as well (cf. Theorem \ref{crconv}). On the other hand, in the $\lambda=1$ case, there is some holomorphic obstruction to the existence of balanced metrics which interpolates between the coupled Futaki invariant $\Fut_c$ for the $N$-tuple $(X, (L_{[i]}))$ (\cf \cite{DP19}) and the higher order Futaki invariants $\cF_{[i],p}$ $(p=1,\ldots, n$) for each polarization $(X, L_{[i]})$ (\cf \cite{Fut04}). Indeed, our new invariant, referred as the {\it higher order coupled Futaki invariants} $\cF_{c, j}$ ($j=1,\ldots, nN+1$) can be expressed as linear combinations of $\Fut_c$ and $\cF_{[i],p}$ ($p=1,\ldots, n$), whose coefficients comes from those of the Hilbert polynomials of $\{(X, L_{[i]})\}_{i=1,\ldots, N}$. Conversely, if this obstruction vanishes, we can show the following:
\begin{thm} \label{balp}
Let $X$ be a compact K\"ahler manifold with $c_1(X)>0$ and $(L_{[i]})$ a rational decomposition of $K_X^{\otimes -1}$. Assume $(X,(L_{[i]}))$ admits a coupled K\"ahler-Einstein metric and all the higher order coupled Futaki invariants vanish. Then for any sufficiently large and divisible $k$, there exists a balanced metric $\bH^{(k)} \in \prod_i \cH_{[i]}^{(k)}$ which is unique modulo automorphisms, and the corresponding Bergman metric $\bFS^{(k)}(\bH^{(k)})$ converges modulo automorphisms to a coupled K\"ahler-Einstein metric in the weak topology.
\end{thm}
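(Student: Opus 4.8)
The plan is to follow Donaldson's original quantization strategy (as adapted to the coupled setting), combining a variational characterization of balanced metrics with a quantitative implicit-function-type argument near the CKE solution. The key object is a sequence of functionals $\mathcal{Z}^{(k)}$ on $\prod_i \mathcal{H}_{[i]}^{(k)}$ (the coupled analogue of Donaldson's $\mathcal{Z}$-functional, or equivalently the ``quantized coupled Ding/Mabuchi'' functional) whose critical points modulo $\Aut$ are exactly the balanced metrics solving \eqref{balflow} at equilibrium. First I would set up this functional, verify that it is convex along geodesics in the symmetric space $\prod_i \GL(D_{[i]}^{(k)})/U(D_{[i]}^{(k)})$ up to the automorphism directions, and show that balanced metrics correspond to its minima; the higher order coupled Futaki invariants $\mathcal{F}_{c,j}$ arise precisely as the obstruction to properness/coercivity of $\mathcal{Z}^{(k)}$ along one-parameter subgroups, so their vanishing is what one feeds in at this stage. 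Uniqueness modulo automorphisms will then be a formal consequence of strict convexity transverse to the $\Aut$-orbit.

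Next I would establish existence for large and divisible $k$ by a continuity/perturbation argument anchored at the CKE metric $\bomega$. Concretely: pull back $\bomega$ to an $N$-tuple of Hermitian forms $\bH_0^{(k)}$ via $L^2$-metrics (the ``projective approximations'' $\FS \circ \Hilb$), use the Berezin--Toeplitz expansion — already invoked in the excerpt for the derivation of \eqref{balflow} from \eqref{imf} — to show that $\bH_0^{(k)}$ is an \emph{approximately balanced} metric, i.e. the right-hand side of \eqref{balflow} is $O(k^{-\infty})$ (or at least $O(k^{-2})$ after correcting by automorphisms) when measured in a suitable operator norm. Then one wants to run a quantitative inverse function theorem: the linearization of the coupled balancing operator $\bH \mapsto (\Id - D_{[i]}^{(k)}\overline{M}_{[i],\bmu}(\bH))_i$ at $\bH_0^{(k)}$ is, up to lower order, $k^{-1}$ times a coupled Laplacian-type operator, whose spectral gap (away from the kernel coming from $\Lie\Aut$) is bounded below independently of $k$ by the Berezin--Toeplitz correspondence with the linearized coupled Monge--Amp\`ere/Ricci operator on the Kähler side. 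The CKE assumption guarantees that this linearized operator on the continuous side is invertible transverse to $\Lie\Aut(X)$, which is exactly where the Futaki-type vanishing must be used to rule out spurious kernel. Combining the small residual with the uniform lower bound gives a genuine balanced metric $\bH^{(k)}$ in a shrinking (but controlled) neighborhood of $\bH_0^{(k)}$.

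Finally, for weak convergence: since $\bH^{(k)}$ lies within $O(k^{-1})$ of $\bH_0^{(k)}$ in the relevant norm and $\bFS^{(k)}(\bH_0^{(k)}) \to \bomega$ by the standard $\FS \circ \Hilb$ convergence (Tian/Zelditch/Catlin, in the coupled form stated earlier for the $\lambda=-1$ case, \cf Theorem \ref{crconv}), the Bergman metrics $\bFS^{(k)}(\bH^{(k)})$ converge to a CKE metric; a priori this is only after passing to a subsequence and modulo automorphisms, and the limit is CKE because it is a weak limit of near-solutions of \eqref{balflow} whose defects vanish — one identifies the limiting equation by testing against smooth forms and using uniform $C^0$ (or capacity) estimates on the Bergman potentials to pass to the limit in the Monge--Amp\`ere terms. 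Uniqueness of the CKE metric modulo $\Aut$ (known in this setting) upgrades subsequential convergence to full convergence.

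The main obstacle I expect is the uniform spectral gap for the linearized coupled balancing operator. In the single-polarization Fano case this is already delicate because the relevant operator is \emph{not} self-adjoint and has a nontrivial kernel exactly when $\Aut(X)$ is nondiscrete; in the coupled case one must control an $N\times N$ system coupled through the total metric $\sum_j \omega_{[j]}$, show that the only obstruction to uniform invertibility transverse to $\Lie\Aut(X)$ is encoded by the $\mathcal{F}_{c,j}$, and — crucially — verify that these \emph{algebraic} invariants really do capture the \emph{analytic} cokernel, via a Berezin--Toeplitz comparison that is uniform in $k$. Getting the error terms in that comparison to beat the spectral gap (rather than merely matching it) is the technical heart of the argument; everything else is either formal (convexity, uniqueness) or a routine adaptation of known quantization estimates to the $N$-tuple setting.
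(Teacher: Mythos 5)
Your proposal drifts from the paper's actual proof at the crucial existence step, and the deviation opens a genuine gap. In your first paragraph you correctly set up the quantized functional (the paper's $\cD^{(k)}$), note that balanced metrics are its critical points, that it is convex along Bergman geodesics, and that the vanishing of the $\cF_{c,j}$ enters as a coercivity/invariance condition—this is precisely the variational framework the paper uses, via Proposition \ref{Ginvariance} (which shows $\cF_{c,j}\equiv 0$ forces $G$-invariance of $\cD^{(k)}$). But you then abandon it: in your second paragraph you switch to a perturbation/implicit-function argument anchored at $\bHilb^{(k)}(\bphi_{\CKE})$, controlling the linearization of the balancing operator, the spectral gap transverse to $\Lie\Aut$, and so on. That is Donaldson's route and it is what the paper uses for Theorem \ref{crconv}, \emph{not} for Theorem \ref{balp}—and Theorem \ref{crconv} is stated only under the extra hypothesis that $\Aut_0(X)$ is trivial, exactly because the balancing-flow / inverse-function-theorem machinery (Lemmata \ref{projest}, \ref{cobf}, etc.) depends on the triviality of $G$ to rule out the kernel directions and to get convergence without mod-$G$ ambiguity. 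Theorem \ref{balp} allows nontrivial $G$; the ``technical heart'' you flag at the end (uniform spectral gap transverse to $\Lie\Aut$, with the algebraic $\cF_{c,j}$ capturing the analytic cokernel uniformly in $k$) is not supplied in the paper, is not needed for Theorem \ref{balp}, and you have not explained how to close it—so your proposal does not prove the statement as stated.

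What the paper actually does for existence is purely variational and avoids all linearization. The vanishing of $\cF_{c,j}$ makes $\cD$ and $\cD^{(k)}$ $G$-invariant; the CKE assumption gives $\cJ_G$-coercivity of $\cD$ (Theorem \ref{Coercivity}); one transfers this to a uniform coercivity $\cD^{(k)}(\bH)\geq \d'\cJ_G^{(k)}(\bH)-C'$ for large $k$ via the comparison $\cJ\circ\bFS^{(k)}\leq(1+\d^{(k)})\cJ^{(k)}+\tau^{(k)}$ (BBGZ, Lemma 7.7); a minimizing sequence for $\cD^{(k)}$ is then pinned in a compact sublevel set of $\cJ^{(k)}$ (BBGZ, Lemma 7.6) and converges to a balanced metric. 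Uniqueness mod $G$ is Proposition \ref{qdf}(3). Weak convergence is obtained not by proximity to $\bHilb^{(k)}(\bphi_{\CKE})$ as in your proposal, but by showing directly that $\bphi^{(k)}=\bFS^{(k)}(\bH^{(k)})$ is a minimizing sequence for the continuous $\cD$ (using $\cD^{(k)}(\bHilb^{(k)}_{\bMA(\mathbf 0)}(\bphi))\to\cD(\bphi)$ and the coercivity bound) and then extracting a weak limit via compactness of $\cJ$-sublevel sets and lower semicontinuity of $\cD$. As a minor additional confusion, if your perturbative $O(k^{-1})$ control at $\bHilb^{(k)}(\bphi_{\CKE})$ did hold uniformly, you would get $C^\infty$-convergence without passing to a subsequence, which is strictly stronger than the weak convergence claimed in Theorem \ref{balp}; the paper deliberately reserves that conclusion for the discrete-automorphism setting of Theorem \ref{crconv}.
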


According to \cite{Don01}, we also study approximate solutions, the so called {\it almost balanced metrics} $\bphi_\ell$ for arbitrary fixed positive integer $\ell$, defined as a formal power series. The construction of almost balanced metrics heavily depends on the asymptotic expansion of (normalized) Bergman kernels $\bar{B}_{[i],\bmu}^{(k)}$ ($i=1,\ldots, N$). With this notion, our balanced condition can be written as
\[
\bar{B}_{[1],\bmu}^{(k)}=\ldots=\bar{B}_{[N],\bmu}^{(k)}=1,
\]
while we can construct almost balanced metrics $\bphi_\ell$ satisfying the following property:
\begin{thm} \label{happroximation}
Let $X$ be a compact K\"ahler manifold with $\lambda c_1(X)>0$ ($\lambda=\pm 1$), and $(L_{[i]})$ a rational decomposition of $K_X^{\otimes -\lambda}$. In the $\lambda=1$ case, we further assume that the decomposition $(L_{[i]})$ admits a coupled K\"ahler-Einstein metric and $\Aut_0(X)$ is trivial. Let $\bphi_{\CKE} \in \prod_i \cH_{[i]}$ denote the unique coupled K\"ahler-Einstein metric. Then there exist $\bet_1, \ldots, \bet_\ell \in (C^\infty(X;\R))^N$ such that $\bphi_\ell:=\bphi_{\CKE}+\sum_{j=1}^\ell k^{-j} \bet_j$ satisfies
\begin{equation} \label{hberg}
\bar{B}_{[i], \bmu}^{(k)}(\bphi_\ell)=1+O(k^{-\ell-1}), \quad i=1,\ldots, N.
\end{equation}
\end{thm}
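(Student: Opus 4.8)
The plan is an iterative (formal power series) argument, whose only substantial ingredient is the asymptotic expansion of the normalized coupled Bergman kernel. Write this expansion as
\[
\bar B_{[i],\bmu}^{(k)}(\bphi)=\bar b_{[i],0}(\bphi)+\tfrac1k\,\bar b_{[i],1}(\bphi)+\tfrac1{k^2}\,\bar b_{[i],2}(\bphi)+\cdots,\qquad i=1,\dots,N ,
\]
where each $\bar b_{[i],r}(\bphi)$ is a universal local expression in the jets of the metrics and the measure attached to $\bphi$, so that $\bphi\mapsto\bar b_{[i],r}(\bphi)$ is smooth near any smooth potential and may be Taylor-expanded there; the expansion holds in every $C^m$-norm, with remainder uniform for $\bphi$ in a bounded set of smooth potentials. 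Two structural facts drive the argument. First, the leading coefficient $\bar b_{[i],0}(\bphi)$ is (up to the chosen normalization) the density $\omega_{[i],\bphi}^n/\mu_{[i],\bphi}$ of the metric volume with respect to the measure, and the system $\bigl\{\bar b_{[i],0}(\bphi)=1\bigr\}_{i=1}^N$ is precisely the coupled K\"ahler--Einstein equation; in particular $\bar b_{[i],0}(\bphi_{\CKE})=1$ for all $i$. Second, along a CKE tuple the Ricci forms $\Ric(\omega_{[i],\CKE})$ all coincide, so the volume forms $\omega_{[i],\CKE}^n$ are mutually proportional and proportional to a common measure $\mu_{\CKE}$; differentiating $\bar b_{[i],0}$ at $\bphi_{\CKE}$ then yields the second-order elliptic system
\[
D\bar b_{[i],0}|_{\bphi_{\CKE}}=-\cL_{[i]},\qquad \cL_{[i]}(\bpsi)\equiv-\Delta_{\omega_{[i],\CKE}}\psi_{[i]}-\lambda\sum_{j=1}^N\psi_{[j]}\pmod{\text{const.}},
\]
namely the linearization at $\bphi_{\CKE}$ of the coupled K\"ahler--Einstein operator considered in \cite{HN17,DH18}.

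Substituting $\bphi_\ell=\bphi_{\CKE}+\sum_{j=1}^\ell k^{-j}\bet_j$ into the expansion and collecting powers of $k^{-1}$, the coefficient of $k^0$ is $\bar b_{[i],0}(\bphi_{\CKE})=1$, while for $p=1,\dots,\ell$ the coefficient of $k^{-p}$ takes the form
\[
D\bar b_{[i],0}|_{\bphi_{\CKE}}(\bet_p)+g_{[i],p}=-\cL_{[i]}(\bet_p)+g_{[i],p},
\]
where $g_{[i],p}$ is a fixed smooth function depending only on $\bphi_{\CKE}$ and on $\bet_1,\dots,\bet_{p-1}$ — it collects the contributions of the higher coefficients $\bar b_{[i],r}$ $(r\geq1)$ and of the higher variations $D^m\bar b_{[i],0}$ $(m\geq2)$, none of which can involve $\bet_p$ for degree reasons. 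Thus \eqref{hberg} reduces to solving successively, for $p=1,\dots,\ell$, the linear elliptic systems $\cL(\bet_p)=(g_{[i],p})_{i=1}^N$. At each step the right-hand side is a fixed $N$-tuple of smooth functions; the compatibility condition that it lie in the range of $\cL$ follows by differentiating the defining normalization identity of $\bar B_{[i],\bmu}^{(k)}$ the appropriate number of times at $\bphi_{\CKE}$, after which elliptic regularity for the system $\cL$ upgrades the solution to $\bet_p\in(C^\infty(X;\R))^N$. The solution is unique only modulo the finite-dimensional space $\Ker\cL$, which is immaterial: one normalizes, say, by prescribing $\mu_{\CKE}$-averages.

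The crux — and the only place where the hypotheses on $X$ enter — is the invertibility of $\cL$ modulo $\Ker\cL$. The kernel always contains the ``trivial'' constant tuples; and since each $\Delta_{\omega_{[i],\CKE}}$ is self-adjoint with respect to $\mu_{\CKE}$ (by the proportionality above), pairing $\cL(\bpsi)$ with $\bpsi$ shows that, off the constant tuples, the governing quadratic form is
\[
\sum_{i=1}^N\int_X\bigl|\nabla\psi_{[i]}\bigr|^2_{\omega_{[i],\CKE}}\,d\mu_{\CKE}\;-\;\lambda\int_X\Bigl(\sum_{i=1}^N\psi_{[i]}\Bigr)^2 d\mu_{\CKE}.
\]
For $\lambda=-1$ this is positive there, so $\cL$ is an isomorphism onto its range and the construction runs unconditionally, giving Theorem \ref{happroximation} in the $\lambda=-1$ case. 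For $\lambda=+1$ the form is indefinite and $\Ker\cL$ can a priori be larger; here one invokes the hypothesis that $\Aut_0(X)$ is trivial. A Bochner--Matsushima type argument for the coupled operator — the same non-degeneracy underlying the implicit-function-theorem description of CKE metrics in \cite{HN17,DH18} — identifies the part of $\Ker\cL$ beyond the constant tuples with holomorphy potentials of holomorphic vector fields on $X$, and the triviality of $\Aut_0(X)$ rules these out, so again $\cL$ is invertible off the constant tuples. I expect this identification of $\Ker\cL$, hence of the range of $\cL$, in the coupled and non-self-adjoint $\lambda=+1$ situation to be the only genuinely delicate point; granting it, the remaining content is the degree bookkeeping above together with the uniformity of the Bergman expansion — legitimate because $\bphi_\ell\to\bphi_{\CKE}$ in $C^\infty$, so this family stays in a bounded set and, for $k$ large, consists of honest $N$-tuples of K\"ahler potentials — which finally yields \eqref{hberg} in $C^0$, and in fact in every $C^m$.
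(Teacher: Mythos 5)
Your proposal is correct and follows essentially the same route as the paper: Bergman kernel asymptotics, an inductive expansion in powers of $k^{-1}$, and at each step a linear elliptic system governed by the operator $\cP_{\bphi_{\CKE}}$, whose kernel is characterized in Proposition~\ref{wlaplacian} (constants when $\lambda=-1$ or $\Aut_0(X)$ is trivial). The only substantive nitpick is that $\cP_{\bphi_{\CKE}}$ is in fact self-adjoint with respect to the common measure $\mu(\bphi_{\CKE})$ (the coupled Ricci potentials vanish at the CKE point), so the delicacy at $\lambda=+1$ is indefiniteness of the quadratic form rather than non-self-adjointness; this does not affect your argument.
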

The almost balanced metrics play a crucial role in the study of the $C^\infty$-convergence of balanced metrics. Roughly speaking, the almost balanced condition implies that the corresponding Bergman metric has a small initial gradient vector along the balancing flow. This geometric interpretation helps us to obtain convergence of the flow and the following:
\begin{thm} \label{crconv}
Under the same assumptions as in Theorem \ref{happroximation}, the balanced metric $\bH^{(k)} \in \prod_i \cH_{[i]}^{(k)}$ exists for sufficiently large and divisible $k$, and the corresponding K\"ahler metric $\bomega_{\bFS^{(k)} (\bH^{(k)})}$ converges to a coupled K\"ahler-Einstein metric in the $C^\infty$-topology.
\end{thm}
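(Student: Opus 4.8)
The plan is to carry out Donaldson's quantization strategy in the coupled setting, using the balancing flow \eqref{balflow} together with the almost balanced metrics of Theorem \ref{happroximation}; the argument is uniform in $\lambda = \pm 1$, and when $\lambda = -1$ the hypothesis that $\Aut_0(X)$ is trivial is automatic since then $K_X$ is $\Q$-ample. Fix a large integer $\ell = \ell(n,N)$ and let $\bphi_\ell = \bphi_{\CKE} + \sum_{j=1}^\ell k^{-j}\bet_j \in \prod_i \cH_{[i]}$ be the almost balanced metric of Theorem \ref{happroximation}. From $\bphi_\ell$ build the initial datum $\bH^{(k)}(0) \in \prod_i \cH_{[i]}^{(k)}$ by taking the induced $L^2$-metrics on $H^0(X, L_{[i]}^{\otimes k})$ (for $k$ sufficiently divisible). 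The first step is to translate the almost balanced condition \eqref{hberg} into the estimate that the right-hand side of \eqref{balflow} at $t = 0$, namely the $N$-tuple $k(\Id - D_{[i]}^{(k)}\overline{M}_{[i],\bmu}(\bH^{(k)}(0)))_i$, has operator norm $O(k^{-\ell + a})$ for a fixed $a = a(n,N)$, the exponent $a$ absorbing the factors $D_{[i]}^{(k)} \sim k^n$ and the passage between the pointwise normalized-Bergman-kernel bound and the matrix norm. This is the quantitative ``small initial gradient'' at $\bphi_\ell$.

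The second step is the analytic core: a uniform invertibility estimate for the linearization $P_k$ of the map $\bH \mapsto (\Id - D_{[i]}^{(k)}\overline{M}_{[i],\bmu}(\bH))_i$, at Hermitian data corresponding to K\"ahler metrics $C^\infty$-close (after rescaling) to the coupled K\"ahler--Einstein metric. Via the Berezin--Toeplitz quantization that produces \eqref{balflow} from \eqref{imf}, $P_k$ is the quantization of the linearization of the coupled equation $\Ric(\omega_{[i]}) - \lambda \sum_j \omega_{[j]} = 0$, a coupled elliptic system of the schematic form $\bigoplus_i (\Delta_{\omega_{[i]}} + \text{zeroth-order coupling through } \sum_j \omega_{[j]})$. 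The hypothesis that $\Aut_0(X)$ is trivial guarantees that this CKE metric is a nondegenerate zero of the coupled system --- it has no kernel --- so $P_k$ is invertible with $\|P_k^{-1}\|$ bounded by a fixed power of $k$, uniformly over a $k^{-1}$-neighbourhood of the relevant configuration. Combining this lower bound on the derivative with the small initial gradient, a continuity/contraction argument along \eqref{balflow}, in the spirit of \cite{Don01, Fin10, Tak18}, shows that the flow starting from $\bH^{(k)}(0)$ exists for all $t \geq 0$, stays within $O(k^{-\ell + b})$ of $\bH^{(k)}(0)$, and converges exponentially fast to an honest balanced metric $\bH^{(k)}$. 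Uniqueness modulo automorphisms then follows from convexity of the associated quantized coupled functional along Bergman geodesics, again using triviality of $\Aut_0(X)$ (or, for $\lambda = -1$, the uniqueness of \cite{HN17}).

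For the third step, the $C^\infty$-convergence, one exploits that $\ell$ was arbitrary. The balanced Bergman metric $\bomega_{\bFS^{(k)}(\bH^{(k)})}$ is within $O(k^{-\ell + b})$ of $\bomega_{\bphi_\ell}$ in any fixed $C^m$-norm, while $\bphi_\ell$ is within $O(k^{-1})$ of $\bphi_{\CKE}$ in every $C^m$-norm; choosing $\ell = \ell(m)$ large forces $\|\bomega_{\bFS^{(k)}(\bH^{(k)})} - \bomega_{\bphi_{\CKE}}\|_{C^m} \to 0$ for each $m$, which is the asserted $C^\infty$-convergence. When $\lambda = 1$ this refines the weak convergence of Theorem \ref{balp} (whose hypothesis on the higher order coupled Futaki invariants holds automatically once $\Aut_0(X)$ is trivial); when $\lambda = -1$ the flow argument already delivers existence and $C^\infty$-convergence directly.

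The hard part will be the uniform linear estimate on $P_k$: showing that the inverse of the quantized coupled linearized operator is bounded by a fixed power of $k$ on a shrinking neighbourhood of the CKE metric, with all constants independent of $k$. This requires a small-eigenvalue analysis of the coupled Laplacian-type system --- where triviality of $\Aut_0(X)$ rules out the small eigenvalues associated with holomorphic vector fields --- fed into the Berezin--Toeplitz asymptotics for all $N$ factors simultaneously, with careful control of how the zeroth-order coupling terms quantize. Once this estimate and the asymptotic expansions of the normalized Bergman kernels $\bar B_{[i],\bmu}^{(k)}$ established in the proof of Theorem \ref{happroximation} are in hand, bounding the nonlinear remainder along the flow uniformly in $k$ is routine bookkeeping.
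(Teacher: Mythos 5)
Your overall plan---run the balancing flow of \eqref{balflow} starting from the almost balanced metric $\bHilb^{(k)}_{\bmu}(\bphi_\ell)$, show the initial gradient is small, prove an exponential contraction estimate in a small geodesic ball, conclude convergence to a balanced metric, and then boost to $C^\infty$-convergence via the freedom in $\ell$---is exactly the paper's strategy (Lemmas~\ref{regularityofbf}, \ref{estop}, the $O(k^{-2\ell-4})$ estimate on $\cR^{(k)}(\bH(0))$, Lemma~\ref{cobf}, and the final bootstrapping via Lemma~\ref{cKL} and uniqueness). Your reduction of the coupled system $\cP_{\bphi_\ell}\bet_\ell = \cdot$ to the nondegeneracy of the CKE equation when $\Aut_0(X)$ is trivial is also the mechanism used in Section~\ref{BKABM}.

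However, there is a genuine gap in your ``analytic core'' (step~2). You attribute the needed contraction estimate for the balancing flow to a uniform invertibility of the quantized linearization $P_k$, obtained by viewing $P_k$ as the Berezin--Toeplitz quantization of the coupled linearized CKE operator and invoking nondegeneracy when $\Aut_0(X)$ is trivial. But the Hessian of $\cD^{(k)}$ at $\bH$ splits (Proposition~\ref{hessfqd}) into \emph{two} contributions: a purely projective ``normal bundle'' term
$k^{-1}\sum_i\int_X |\xi^{\bot}_{A_{[i]}}|^2_{\aFS(H_{[i]})}\,\mu$
and a quantized coupled-Laplacian term
$-k^{-2}(\cP_{\bFS^{(k)}(\bH)}(h_{A_{[i]}}),(h_{A_{[i]}}))_{\bmu}$.
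The latter is only what the Berezin--Toeplitz picture sees, and it is merely non-negative (Proposition~\ref{wlaplacian}) and of order $k^{-2}$; in the huge space $\prod_i\cH^{(k)}_{[i]}$ the map $\bA\mapsto(h_{A_{[i]}})$ collapses most directions (those moving $X$ inside the ambient projective spaces), so nondegeneracy of $\cP$ on functions cannot by itself give a uniform lower bound for $\nabla^2\cD^{(k)}$ on all of $\sqrt{-1}\bigoplus_i\fu^{(k)}_{H_{[i]}}$. In the paper the coercivity actually comes from the \emph{first}, order-$k^{-1}$, term, via the Phong--Sturm projective estimate (Lemma~\ref{projest}, \eqref{projo}--\eqref{projt}), which bounds $\int_X |\xi^{\bot}_A|^2 \,\mu$ below by $c\, \|A\|^2_H$ uniformly within $R$-bounded geometry. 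Triviality of $\Aut_0(X)$ is used precisely to make the estimate \eqref{projt} available (absence of holomorphic vector fields means no $\bA$ with $\xi^{\bot}_A\equiv 0$ and $\xi^{\top}_A\ne 0$), not through nondegeneracy of the CKE linearization. As stated, your step~2 would at best produce a weaker $O(k^{-2})$ gap supported only on the image of the Hamiltonian map, and the contraction argument would not close without the projective ingredient.
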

The core of the paper is Section \ref{Gquant}, where we define the balanced metrics in the coupled setting and study the basic property of them. Theorem \ref{balp} are proved by a variational method, \ie establishing some uniform coercivity estimates for (quantized) Ding functionals. This can be done by a combination of \cite{Tak19} and a simple extension of the arguments for the $N=1$ case (\cf \cite{BBGZ13, BN14}). The proofs of Theorem \ref{happroximation} and Theorem \ref{crconv} also proceed along the same line as \cite{Don01}. However we emphasize that the proof of Theorem \ref{crconv} does not rely on Theorem \ref{balp}, and $C^\infty$-convergence result is new even in the $N=1$ case as described below.
\subsection{Relation to other results}
When $N=1$, Donaldson \cite{Don01} studied canonical Bergman metrics (also called balanced metrics in his literature) defined as the fixed points of the map $T_{\MA}^{(k)} \colon \cH^{(k)} \rightarrow \cH^{(k)}$ (see Section \ref{sfcc} for more details). Such a metric can be considered for arbitrary polarization $L \to X$ and quantizes the constant scalar curvature K\"ahler metrics in the $C^\infty$-topology. The proof also relied on the balancing flow method, which was further exploited in several settings (\eg \cite{Fin10, Has15}). We remark that the notion of Donaldson's balanced metric does not agree with that introduced by the author. So in the K\"ahler-Einstein settings, we have two $C^\infty$-quantization schemes. It would be interesting to compare the asymptotic behavior of these two balanced metrics.

On the other hand, the notion of our balanced metrics \eqref{bmetric} has already been studied in several papers \cite{BBGZ13, BN14, Tak15} in the $N=1$ case, where the existence and weak convergence results of the balanced metrics were given under some suitable assumptions.
%==============Subsection 1.2==========================
\subsection{Organization of the paper}
This paper is organized as follows. In Section \ref{CDF}, we review the definitions of several functionals on the space of K\"ahler potentials and coercivity of the Ding functional. Then we study some properties for the weighted Laplacian (\cf Proposition \ref{wlaplacian}). In Section \ref{Gquant}, we introduce the quantized space, functionals in the coupled setting. All of these notions are introduced by a combination of those in the case $N=1$ which are well-known for experts (for instance, see \cite{BBGZ13}). We study some variational properties and asymptotic slope of the functionals along Bergman geodesic rays. In particular, applying to the Bergman geodesic rays which originate from test configurations, we obtain an algebro-geometric obstruction. In Section \ref{ewcbm}, we prove the existence and weak convergence of balanced metrics by using a variational method. In Section \ref{TCR}, we construct almost balanced metrics by solving linear systems \eqref{cleq} which naturally arise from the asymptotic expansion of Bergman kernels. Then we review the definition of $R$-bounded geometry and some related results. Finally, we study the properties for the balancing flow starting from almost balanced metrics and prove Theorem \ref{crconv}.
%==============Subsection 1.3==========================
\subsection{Conventions and remarks}
Throughout the paper, we assume that each $L_{[i]}$ is a line bundle to simplify the expressions. However, we remark that in general case, all the results in this paper still hold by considering only {\it divisible} $k$.

We make the convention that $X$ is a compact K\"ahler manifold with a reference K\"ahler form $\hat{\omega}$ in a K\"ahler class $\Omega$. When the class $\Omega$ is represented by some ($\Q$-) ample line bundle, we also fix a fiber metric $\hat{h}$ so that the curvature of $\hat{h}$ coincides with $\hat{\omega}$.

For the coupled settings, we use the bold symbol (\eg $\bphi$, $\bnu$), which denotes an $N$-tuple of functions, K\"ahler forms, measures, and so on, whereas the calligraphy or tilde like $\cJ$, $\cD$, $\tilde{S}$ are often used for functionals and maps. Whenever no further comment is made, the $i$-th component $(i=1,\ldots, N)$ is expressed by the special index ``$[i]$'' in order to distinguish from other indices. The notation $[i]$ is also used to denote several notions just obtained by applying the single setup to the $i$-th component $(X, \Omega_{[i]})$. 

For the quantized settings, the index ``$(k)$'' means the level of quantization, \ie the exponent of $L^{\otimes k}$. 

For the Fubini-Study metrics, we distinguish the K\"ahler form $\omega_{\FS^{(k)}(H)}$ associated to the image of the Fubini-Study map from the Fubini-Study metric $\omega_{\aFS(H)} \in 2 \pi c_1(H^0(X,L^{\otimes k}), \cO(1))$ on the ambient projective space. Restricted to $X$, the relation between these metrics is given by $\omega_{\FS^{(k)}(H)}=k^{-1} \omega_{\aFS(H)}$.

For the $G$-linearizations in the case when $\lambda=1$ and $G$ is non-trivial, we know that $G$ is a linear algebraic group (since $X$ is Fano), and hence there exists an integer $k_0$ such that each $L_{[i]}^{\otimes k_0}$ has a $G$-linearization (\cf \cite[Chapter 7]{Dol03}). Moreover, since the $G$-linearization is unique up to an overall constant multiple, adjusting constants, we obtain a $G$-equivariant isomorphism
\[
K_X^{\otimes -k_0} \simeq L_{[1]}^{\otimes k_0} \otimes \cdots \otimes L_{[N]}^{\otimes k_0}.
\]
In later arguments we assume that $k_0=1$ for simplicity (again, in general case, all the results in this paper still hold by considering only divisible $k$).
%==============Acknowledgement==========================
\begin{ackn}
The author expresses his gratitude to Prof. Tomoyuki Hisamoto, Ryoichi Kobayashi and Dr. Satoshi Nakamura for reading the draft of the paper. Also the author is grateful to the referees for many insightful comments which have helped to improve the article.
\end{ackn}
%==============Section 2==========================
\section{Preliminaries} \label{CDF}
%==============Subsection 2.1==========================
\subsection{Coercivity of the Ding functional}
Let $X$ be an $n$-dimensional compact K\"ahler manifold with $\lambda c_1(X)>0$, $\hat{\omega}$ a K\"ahler metric in a K\"ahler class $\Omega$. First, we set
\[
\cH:=\{ \phi \in C^{\infty}(X;\R)|\omega_{\phi}:=\hat{\omega}+\dd \phi>0\}.
\]
The {\it Aubin-Mabuchi energy} is defined by
\[
\AM(\phi):=\frac{1}{(n+1)\Omega^n} \sum_{j=0}^n \int_X \phi \omega_{\phi}^j \wedge \hat{\omega}^{n-j}.
\]
The Aubin-Mabuchi energy satisfies the scaling property $\AM(\phi+c)=\AM(\phi)+\Omega^n \cdot c$ for all $\phi \in \cH$ and $c \in \R$. For $\phi \in \cH$, we set
\[
\MA(\phi):=\frac{\omega_\phi^n}{\Omega^n}
\]
so that $\MA(\phi)$ becomes the propability measure on $X$. We define the functionals $J$, $\hat{L}$ on $\cH$ as follows
\[
\hat{L}(\phi):=\int_X \phi \MA(0),
\]
\[
J(\phi)=J(\omega_\phi):=-\AM(\phi)+\hat{L}(\phi),
\]
Then the variational formula for $J$ is given by
\[
\d J|_\phi(\d \psi)=-\int_X \d \psi(\MA(\phi)-\MA(0)).
\]
In particular, this implies that the infimum of $J$ is zero, and attained by $\hat{\omega}$.

Next we work in the CKE setting. Let $(\Omega_{[i]})$ be a decomposition of $2 \pi \lambda c_1(X)$ and $\hat{\bomega}=(\hat{\omega}_{[i]})\in \prod_i \Omega_{[i]}$ an $N$-tuple of K\"ahler metrics. We take also a K\"ahler form $\theta_0$ satisfying the Calabi-Yau equation \cite{Yau78}
\[
\Ric(\theta_0)=\lambda \sum_i \hat{\omega}_{[i]}, \quad \int_X \theta_0^n=1.
\]
For $\bphi \in \prod_i \cH_{[i]}$, we define
\[
\hat{\cL}(\bphi):=\sum_i \hat{L}_{[i]} (\phi_{[i]}),
\]
\[
\cJ(\bphi)=\cJ(\bomega_{\bphi}):=\sum_i J_{[i]}(\phi_{[i]}),
\]
\[
\cL(\bphi):=-\lambda \log \int_X e^{-\lambda \sum_i \phi_{[i]}} \theta_0^n.
\]
Moreover, following \cite{HN17}, the {\it Ding functional} is defined by
\[
\cD(\bphi)=\cD(\bomega_{\bphi}):=-\sum_i \AM_{[i]}(\phi_{[i]})+\cL(\bphi).
\]
It is known that the Ding functional $\cD$ is convex along weak geodesics (\cf \cite[Lemma 3.1, Theorem 4.3]{HN17}). Moreover, a direct computation shows that
\begin{equation} \label{vfding}
\d \cD|_{\bphi}(\d \bphi)=-\sum_i \int_X \d \phi_{[i]} (1-e^{\r_{[i]}(\bphi)})\MA(\phi_{[i]}).
\end{equation}
This yields that a potential $\bphi \in \prod_i \cH_{[i]}$ is CKE if and only if it is a critical point of $\cD$. 

Finally, we take account of the action of automorphisms $\Aut(X)$. Set $G:=\Aut_0(X)$, the identity component of the automorphism group $\Aut(X)$. Then the $G$-invariant functional $\cJ_G \colon \prod_i \cH_{[i]} \to \R$ is defined as
\[
\cJ_G(\bphi)=\cJ_G(\bomega_{\bphi}):=\inf_{f \in G} \cJ(f^\ast \bomega_{\bphi}).
\]
\begin{dfn}
We say that the Ding functional $\cD$ is $\cJ_G$-coercive if there exists some constants $\d, C>0$ such that
\[
\cD(\bphi) \geq \d \cJ_G(\bphi)-C, \quad \bphi \in \prod_i \cH_{[i]}.
\]
\end{dfn}
Then by using the general criterion for coercivity developed by Darvas-Rubinstein \cite{DR15}, the author proved the following:
\begin{thm}[Theorem 1.2, \cite{Tak19}] \label{Coercivity}
Let $X$ be a compact K\"ahler manifold with $c_1(X)>0$ and $(\Omega_{[i]})$ be a decomposition of $2 \pi c_1(X)$. Then $(X, (\Omega_{[i]}))$ admits a coupled K\"ahler-Einstein metric if and only if the Ding functional $\cD$ is $\cJ_G$-coercive.
\end{thm}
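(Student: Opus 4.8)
The plan is to prove the two implications separately, working throughout in the finite--energy space $\cE^1:=\prod_i\cE^1_{[i]}$ of $N$-tuples of $\omega_{[i]}$-plurisubharmonic potentials of finite energy, equipped with the sum $d_1$ of Darvas' $L^1$-Finsler distances. This is a complete geodesic metric space to which $\sum_i\AM_{[i]}$, $\cL$, $\cD$, $\cJ$ and $\cJ_G$ all extend by monotone/continuous limits: $\cD$ remains convex along finite--energy geodesics (by the convexity statement quoted above from \cite{HN17}) and $d_1$-lower semicontinuous, $\cL$ remains finite and $d_1$-continuous, and $\cJ(\bphi)$ is quasi-equivalent to $d_1(0,\bphi)$. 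The group $G=\Aut_0(X)$ acts on $(\cE^1,d_1)$ by pullback, and this action is by isometries, so $\cJ_G(\bphi)$ is comparable to the $d_1$-distance from $\bphi$ to the $G$-orbit of the origin.

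\textbf{Coercivity $\Rightarrow$ existence.} This is the direct method of the calculus of variations. Assuming $\cD(\bphi)\geq\d\cJ_G(\bphi)-C$, I would take a minimizing sequence $\bphi_j$; after normalizing the Aubin--Mabuchi energies and replacing $\bphi_j$ by $f_j^*\bphi_j$ for suitable $f_j\in G$ one may assume $\cJ(\bphi_j)=\cJ_G(\bphi_j)$, which then stays bounded. The $d_1$-compactness of $\cJ$-sublevel sets (modulo constants) produces a subsequential $d_1$-limit $\bphi_\infty\in\cE^1$, which is a minimizer of $\cD$ by lower semicontinuity. The extended Euler--Lagrange equation \eqref{vfding} shows $\bphi_\infty$ solves the coupled complex Monge-Amp\`ere system with densities bounded above and below, so Ko\l odziej's $L^\infty$ estimate together with the Evans--Krylov/Schauder bootstrap yields $\bphi_\infty\in\prod_i\cH_{[i]}$: a coupled K\"ahler--Einstein metric.

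\textbf{Existence $\Rightarrow$ coercivity.} For this direction I would invoke the abstract properness principle of Darvas--Rubinstein \cite{DR15}, applied to the $G$-invariant, $d_1$-lower semicontinuous, geodesically convex functional $\cD$ on the $G$-space $(\cE^1,d_1)$. That principle reduces $\cJ_G$-coercivity to the conjunction of: (a) $\cD$ attains its infimum on $\cE^1$; (b) the set of minimizers is a single $G$-orbit; and (c) every $d_1$-geodesic ray issuing from a minimizer along which $\cD$ stays bounded is, after affine reparametrization, of the form $t\mapsto f_t^*\bphi_0$ for a one-parameter subgroup $\{f_t\}\subset G$. Item (a) is the hypothesis, and items (b), (c) are the real content.

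\textbf{The main obstacle.} I expect (b) and (c)---both rigidity statements, and the place where $c_1(X)>0$ enters---to be the crux. For (b): along the finite--energy geodesic joining two coupled K\"ahler--Einstein potentials, $-\sum_i\AM_{[i]}$ is affine while $\cL(\bphi)=-\log\int_X e^{-\sum_i\phi_{[i]}}\theta_0^n$ is convex, by Berndtsson's positivity theorem applied to the single Hermitian line bundle carrying the ``diagonal'' weight $\sum_i\phi_{[i]}$; hence $\cD$ is affine along the geodesic, and Berndtsson's analysis of the equality case forces the geodesic to be generated by a holomorphic vector field, so the two minimizers differ by an element of $G$. For (c): a ray of the stated type has $\cD(\bphi_0)=\min_{\cE^1}\cD$, so convexity plus boundedness forces $\cD$ to be constant equal to $\min_{\cE^1}\cD$ along it; the same equality analysis then shows the ray is tangent to a holomorphic vector field, and a coupled Matsushima reductivity statement for $G$---which I would deduce either from the moment-map picture of \cite{DP19} or directly by a Bochner identity off the coupled Einstein equations---integrates that vector field to a one-parameter subgroup $\{f_t\}\subset G$ realizing the ray (and then $\cJ_G$, being $G$-invariant, is bounded along it, which is what the principle needs). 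The genuinely delicate part is transplanting Berndtsson's strict-positivity and equality discussion to the coupled setting and to merely finite--energy (non-smooth) geodesics: one must check that the curvature positivity of the direct image attached to $e^{-\sum_i\phi_{[i]}}\theta_0^n$ is \emph{strict} transverse to the $\Aut_0(X)$-directions, and make the rigidity statements rigorous without regularity of the geodesic. The remaining ingredients---the $d_1$-extensions of the functionals, the quasi-equivalence of $\cJ$ and $\cJ_G$ with $d_1$-distance, and the verification of the bookkeeping hypotheses of the abstract principle---are by now routine, assembled from the $N=1$ theory of \cite{BBGZ13, BN14, DR15} applied componentwise together with the coupled convexity of \cite{HN17}.
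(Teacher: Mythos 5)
The paper does not give a proof of Theorem~\ref{Coercivity}: it is quoted, as labelled, from the author's earlier article~\cite{Tak19}, and the surrounding text records only that it was established via the Darvas--Rubinstein properness principle of~\cite{DR15}. Your sketch is organized around precisely that principle, so you match the strategy the paper announces; a line-by-line comparison, however, would require~\cite{Tak19}, which is not reproduced here.

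A few substantive remarks on your outline are still worth making. You take the $G$-invariance of $\cD$ for granted when invoking the abstract properness theorem, but in the Fano case this is equivalent to the vanishing of the coupled Futaki invariant (Proposition~\ref{Ginvariance} in this paper); it does hold on both sides of the claimed equivalence (existence of a CKE metric forces $\Fut_c=0$, and $\cJ_G$-coercivity forces $\cD$ to be constant on $G$-orbits because it is affine and bounded below there), but since it is the entry ticket to the framework it should be said explicitly. Your item~(b)---that $\cD$ affine along a finite-energy geodesic forces the geodesic to be driven by a single holomorphic vector field, via Berndtsson applied to the diagonal weight $\sum_i\phi_{[i]}$ and the observation that a subgeodesic sum being a true geodesic forces each summand to be one for the \emph{same} field---is exactly the mechanism the present paper rehearses at the Bergman level in the proof of Proposition~\ref{qdf}~(2), so there you are aligned with the author. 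Finally, in the direction ``coercivity $\Rightarrow$ existence'' you assert that the finite-energy minimizer solves a coupled Monge--Amp\`ere system with ``densities bounded above and below,'' but this is not automatic: $e^{-\sum_i\phi_{[i]}}$ is a priori only bounded away from zero, and promoting a finite-energy minimizer to a bounded (and then smooth) potential requires an additional integrability/Moser--Trudinger or envelope argument of the kind \cite{HN17,BBGZ13} use, not a bare appeal to Ko\l odziej.
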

On the other hand, we note that in the $\lambda=-1$ case, the $\cJ$-coercivity estimate for $\cD$ holds without any assumptions, which was essentially shown in the course of the proof for \cite[Theorem A]{HN17}.
%==============Subsection 2.2==========================
\subsection{The weighted Laplacian}
For $\bphi \in \prod_i \cH_{[i]}$, we define the {\it canonical measure} $\mu$ by
\begin{equation} \label{canmeasure}
\mu(\bphi)=\mu(\bomega_{\bphi}):=\frac{e^{-\lambda \sum_i \phi_{[i]}} \theta_0^n}{\int_X e^{-\lambda \sum_i \phi_{[i]}} \theta_0^n}.
\end{equation}
Also we define the {\it coupled Ricci potential} $\r_{[i]}=\r_{[i]}(\bphi) \in C^\infty(X;\R)$ by
\[
\Ric(\omega_{\phi_{[i]}(\bphi)})-\lambda \sum_j \omega_{\phi_{[j]}}=\dd \r_{[i]}(\bphi), \quad \int_X e^{\r_{[i]}(\bphi)}\omega_{\phi_{[i]}}^n=\Omega_{[i]}^n.
\]
By the definition, we obtain
\begin{equation} \label{eqmeas}
e^{\r_{[1]}(\bphi)} \MA(\phi_{[1]})=\ldots= e^{\r_{[N]}(\bphi)} \MA(\phi_{[N]})=\mu(\bphi).
\end{equation}
Moreover, $\bphi \in \prod_i \cH_{[i]}$ is CKE if and only if
\[
\r_{[1]}(\bphi)=\ldots=\r_{[N]}(\bphi)=0.
\]
\begin{prop} \label{vfmu}
The variation of $\mu(\bphi)$ and $\r_{[i]}(\bphi)$ along the direction $\d \bphi=(\d \phi_{[1]}, \ldots, \d \phi_{[N]})$ are given by
\[
\d \mu(\d \bphi)=-\lambda \sum_i \d \phi_{[i]} \mu(\bphi)+\lambda \sum_i \int_X  \d \phi_{[i]} \mu(\bphi),
\]
\[
\d \r_{[i]}(\d \bphi)=-\Delta_{\phi_{[i]}} \d \phi_{[i]}-\lambda \sum_i \d \phi_{[i]} \mu(\bphi)+\lambda \sum_i \int_X  \d \phi_{[i]} \mu(\bphi).
\]
\end{prop}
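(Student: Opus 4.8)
The plan is simply to differentiate the explicit expressions: the only external ingredient is the first-variation formula for the complex Monge--Amp\`ere measure, and everything else is bookkeeping of the normalizing constant appearing in \eqref{canmeasure}.

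First I would treat $\mu(\bphi)$. Abbreviating $u:=\sum_i \phi_{[i]}$ and $Z(\bphi):=\int_X e^{-\lambda u}\theta_0^n$, so that $\mu=Z^{-1}e^{-\lambda u}\theta_0^n$, one has $\d\big(e^{-\lambda u}\theta_0^n\big)=-\lambda(\d u)\,e^{-\lambda u}\theta_0^n$, hence $\d Z=-\lambda Z\int_X(\d u)\,\mu$. The quotient rule then gives
\[
\d\mu(\d\bphi)=-\lambda(\d u)\,\mu+\lambda\Big(\int_X(\d u)\,\mu\Big)\mu ,
\]
which is the first formula once $\d u=\sum_i\d\phi_{[i]}$ is substituted. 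In particular $\d\mu/\mu=-\lambda\sum_i\d\phi_{[i]}+\lambda\sum_i\int_X\d\phi_{[i]}\,\mu$ is a smooth function on $X$ (a genuine function plus a constant), and as a consistency check $\int_X\d\mu=0$, in accordance with $\mu$ being a probability measure.

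For $\r_{[i]}(\bphi)$ I would start from \eqref{eqmeas}, i.e. $e^{\r_{[i]}(\bphi)}\MA(\phi_{[i]})=\mu(\bphi)$, which means $\r_{[i]}(\bphi)=\log\big(d\mu/d\MA(\phi_{[i]})\big)$. Differentiating,
\[
\d\r_{[i]}(\d\bphi)=\frac{\d\mu}{\mu}-\frac{\d\MA(\phi_{[i]})}{\MA(\phi_{[i]})}.
\]
The first term was just computed. For the second, only $\d\phi_{[i]}$ enters $\omega_{\phi_{[i]}}=\hat{\omega}_{[i]}+\dd\phi_{[i]}$, and the standard identity $\d(\omega_{\phi_{[i]}}^n)=n\,\dd(\d\phi_{[i]})\wedge\omega_{\phi_{[i]}}^{n-1}=(\Delta_{\phi_{[i]}}\d\phi_{[i]})\,\omega_{\phi_{[i]}}^n$ yields $\d\MA(\phi_{[i]})/\MA(\phi_{[i]})=\Delta_{\phi_{[i]}}\d\phi_{[i]}$. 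Combining the two pieces gives
\[
\d\r_{[i]}(\d\bphi)=-\Delta_{\phi_{[i]}}\d\phi_{[i]}-\lambda\sum_i\d\phi_{[i]}+\lambda\sum_i\int_X\d\phi_{[i]}\,\mu ,
\]
which is the second formula.

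There is no real obstacle here; the computation is routine. The two points requiring a little care are: (i) one must differentiate the normalization $Z$, which is exactly what produces the integral terms in both formulas — omitting it would break the sanity checks $\int_X\d\mu=0$ and $\int_X\d\r_{[i]}\,\mu=-\int_X(\Delta_{\phi_{[i]}}\d\phi_{[i]})\,\mu$, the latter being precisely what one obtains by differentiating the constraint $\int_X e^{\r_{[i]}}\omega_{\phi_{[i]}}^n=\Omega_{[i]}^n$; and (ii) the asymmetry between the two formulas — every component $\d\phi_{[j]}$ feeds into the variation through $\mu$, whereas only the $i$-th variation contributes a Laplacian term, since $\omega_{\phi_{[i]}}$ depends on $\phi_{[i]}$ alone.
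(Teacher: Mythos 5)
Your proposal is correct and follows essentially the same route as the paper: the first variation of $\mu$ by the quotient rule, then the variation of $\r_{[i]}$ obtained by differentiating the relation $e^{\r_{[i]}}\MA(\phi_{[i]})=\mu$ (the paper differentiates the exponential form directly, you take logarithms first—the same computation). You have also correctly dropped the spurious $\mu(\bphi)$ appearing in the second term of the displayed formula for $\d\r_{[i]}$ in the statement, which is a minor typo there since $\d\r_{[i]}$ is a function, not a measure.
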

\begin{proof}
A straightforward computation shows that
\begin{eqnarray*}
\d \mu(\d \bphi) &=& \frac{-\lambda \sum_i \d \phi_{[i]} \cdot e^{-\lambda \sum_i \phi_{[i]}} \theta_0^n \cdot \int_X e^{-\lambda \sum_i \phi_{[i]}} \theta_0^n+e^{-\lambda \sum_i \phi_{[i]}} \theta_0^n \cdot \int_X \lambda \sum_i \d \phi_{[i]} e^{-\lambda \sum_i \phi_{[i]}} \theta_0^n}{\bigg( \int_X e^{-\lambda \sum_i \phi_{[i]}} \theta_0^n \bigg)^2}\\
&=& -\lambda \sum_i \d \phi_{[i]} \mu(\bphi)+\lambda \sum_i \int_X  \d \phi_{[i]} \mu(\bphi).
\end{eqnarray*}
Also differentiating the equation $e^{\r_{[i]}(\bphi)} \MA(\phi_{[i]})=\mu(\bphi)$ we get
\[
\d \r_{[i]}(\d \bphi) \mu(\bphi)+\Delta_{\phi_{[i]}} \d \phi_{[i]} \mu(\bphi)=\d \mu (\d \bphi).
\]
Combining with the variational formula of $\mu(\bphi)$, we obtain the desired result.
\end{proof}
For the time being, we denote several quantities with respect to $\omega_{\phi_{[i]}}$ (\eg the Riemannian metric, covariant derivative, Laplacian) by $g_{[i]}$, $\nabla_{[i]}$, $\Delta_{[i]}$, and so on. We define the operator
\[
\cP_{\bphi} \colon (C^\infty(X;\C))^N \longrightarrow (C^\infty(X;\C))^N
\]
by $N$-tuple of weighted Laplacians
\[
\cP_{\bphi}(\bv):=
\begin{pmatrix}
\Delta_{[1]} v_{[1]}+(\p \r_{[1]}, \p \overline{v_{[1]}})_{[1]}+\lambda \sum_j v_{[j]}-\lambda \sum_j \int_X v_{[j]}\mu(\bphi)\\
\vdots \\
\Delta_{[N]} v_{[N]}+(\p \r_{[N]}, \p \overline{v_{[N]}})_{[N]}+\lambda \sum_j v_{[j]}-\lambda \sum_j \int_X v_{[j]}\mu(\bphi)
\end{pmatrix}.
\]
Meanwhile, the space of $N$ functions $(C^\infty(X;\C))^N$ is equipped with a Hermitian product given by
\begin{equation} \label{lproduct}
(\bv, \bw)_{\bmu(\bphi)}:=\sum_i \int_X v_{[i]} \cdot \overline{w_{[i]}} \mu(\bphi), \quad \bv, \bw \in (C^\infty(X;\C))^N.
\end{equation}
The operator $\cP_{\bphi}$ naturally arises as the Hessian of the Ding functional $\cD$. Indeed, for any smooth curves $\bphi:=\bphi(t)$, a direct computation shows that
\begin{eqnarray*}
\frac{d^2}{dt^2} \cD(\bphi) &=& -\sum_i \bigg[ \int_X \ddot{\phi}_{[i]} (1-e^{\rho_{[i]}(\bphi)})\MA(\phi_{[i]})+\int_X \dot{\phi}_{[i]} \Delta_{[i]} \dot{\phi}_{[i]} \MA(\phi_{[i]}) \bigg] \\
&-& \lambda \int_X \bigg( \sum_i \dot{\phi}_{[i]} \bigg)^2 \mu(\bphi)+\lambda \bigg( \sum_i \int_X \dot{\phi}_{[i]} \mu(\bphi) \bigg)^2 \\
&=& -\sum_i \int_X (\ddot{\phi}_{[i]}-|\partial \dot{\phi}_{[i]}|^2)(1-e^{\rho_{[i]}(\bphi)})\MA(\phi_{[i]})\\
&-& \sum_i \int_X \dot{\phi}_{[i]} \big(\Delta_{[i]} \dot{\phi}_{[i]}+(\p \r_{[i]}, \p \dot{\phi}_{[i]})_{[i]} \big) \mu(\bphi)\\
&-& \lambda \int_X \bigg( \sum_i \dot{\phi}_{[i]} \bigg)^2 \mu(\bphi)+\lambda \bigg( \sum_i \int_X \dot{\phi}_{[i]} \mu(\bphi) \bigg)^2 \\
&=& -\sum_i \int_X (\ddot{\phi}_{[i]}-|\partial \dot{\phi}_{[i]}|^2)(1-e^{\rho_{[i]}(\bphi)})\MA(\phi_{[i]})-(\cP_{\bphi} \dot{\bphi}, \dot{\bphi})_{\bmu(\bphi)},
\end{eqnarray*}
where the first term is zero if $\bphi$ is a geodesic (\cf \cite[Proposition 4.25]{Sze14}) (but we will not use this formula in this paper). The following proposition is essentially due to \cite[Lemma 3, Lemma 4]{Hul17} and \cite[Theorem 1.3]{Pin18}, but we will give the proof for the sake of completeness:
\begin{prop} \label{wlaplacian}
The following properties hold:
\begin{enumerate}
\item The operator $\cP_{\bphi}$ is elliptic.
\item The operator $\cP_{\bphi}$ is self-adjoint with respect to the Hermitian product \eqref{lproduct}. In particular, all eigenvalues of $\cP_{\bphi}$ are real.
\item The operator $\cP_{\bphi}$ is non-positive and
\[
\Ker \cP_{\bphi}=\{\bv=(v_{[i]}) \in (C^\infty(X;\C))^N|V_{[1]}=\ldots=V_{[N]}, \text{$V_{[i]}$'s are holomorphic} \},
\]
where for each $\bv=(v_{[i]}) \in (C^\infty(X;\C))^N$, $V_{[i]}$ denotes a vector field of type $(1,0)$ given by
\[
i_{V_{[i]}} \omega_{[i]}=\sqrt{-1} \bp v_{[i]}.
\]
In particular, if $\lambda=-1$ or $G$ is trivial, then the kernel consists of $N$ constant functions, \ie $\Ker \cP_{\bphi} \simeq \C^N$.
\end{enumerate}
\end{prop}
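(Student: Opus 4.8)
The backbone of the proof is the pointwise description of $\cP_{\bphi}$: its $i$-th component is the drift (weighted) Laplacian $\Delta_{[i]}v_{[i]}+(\p\r_{[i]},\p\overline{v_{[i]}})_{[i]}$ of $(X,\omega_{\phi_{[i]}})$ with density $e^{\r_{[i]}}$, plus the coupling term $\lambda\big(\sum_j v_{[j]}-\sum_j\int_X v_{[j]}\mu\big)$. I will use two facts: by \eqref{eqmeas} the canonical measure satisfies $\mu=e^{\r_{[i]}}\MA(\phi_{[i]})$ for every $i$, so $\Delta_{[i]}+(\p\r_{[i]},\p\overline{\,\cdot\,})_{[i]}$ is formally self-adjoint for $\mu$; and differentiating the defining equation of $\r_{[i]}$ gives the twisted Ricci identity
\begin{equation}\label{pftric}
\Ric(\omega_{\phi_{[i]}})-\dd\r_{[i]}=\lambda\sum_{j=1}^{N}\omega_{\phi_{[j]}}\qquad(i=1,\dots,N),
\end{equation}
whose right-hand side dominates $\omega_{\phi_{[i]}}$ when $\lambda=1$. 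Granting these, (1) is immediate: the principal symbol of $\cP_{\bphi}$ is block-diagonal with $i$-th block $\sigma_2(\Delta_{[i]})$, the drift terms being first order, the coupling term of order zero, and $\bv\mapsto\big(\sum_j\int_X v_{[j]}\mu\big)_i$ a bounded finite-rank operator; thus $\cP_{\bphi}$ has the invertible symbol of $\diag(\Delta_{[1]},\dots,\Delta_{[N]})$ and is elliptic. For (2), integrating by parts against $\mu$ and using $\mu=e^{\r_{[i]}}\MA(\phi_{[i]})$ gives
\[
\sum_i\int_X\!\big(\Delta_{[i]}v_{[i]}+(\p\r_{[i]},\p\overline{v_{[i]}})_{[i]}\big)\overline{w_{[i]}}\,\mu=-\sum_i\int_X(\p v_{[i]},\p\overline{w_{[i]}})_{[i]}\,\mu ,
\]
which is Hermitian symmetric in $(\bv,\bw)$; the coupling and integral terms contribute $\lambda\big(\sum_j v_{[j]},\sum_i w_{[i]}\big)_{\bmu}-\lambda\big(\sum_j\int_X v_{[j]}\mu\big)\overline{\big(\sum_i\int_X w_{[i]}\mu\big)}$, again symmetric, so $\cP_{\bphi}$ is formally self-adjoint for \eqref{lproduct}; being elliptic on a compact manifold it then has discrete real spectrum.

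For (3), taking $\bw=\bv$ in the identity above and writing $S:=\sum_j v_{[j]}$ yields
\[
(\cP_{\bphi}\bv,\bv)_{\bmu}=-\sum_i\int_X|\p v_{[i]}|_{[i]}^2\,\mu+\lambda\Big(\int_X|S|^2\mu-\big|\textstyle\int_X S\,\mu\big|^2\Big),
\]
and the bracket is the $\mu$-variance of $S$, hence $\ge0$. If $\lambda=-1$ both terms are $\le0$, so $\cP_{\bphi}\le0$; equality forces $\p v_{[i]}\equiv0$ for all $i$, whence (as $X$ is connected) each $v_{[i]}$ is constant, i.e.\ $\Ker\cP_{\bphi}\simeq\C^N$, which is exactly the stated set since constants are the holomorphy potentials of $V_{[i]}=0$.

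The substantial case is $\lambda=1$, where one must prove
\begin{equation}\label{pfkey}
\sum_i\int_X|\p v_{[i]}|_{[i]}^2\,\mu\ \ge\ \int_X|S|^2\mu-\big|\textstyle\int_X S\,\mu\big|^2 .
\end{equation}
The plan is to apply, for each $i$, the weighted ($\mu$-)Bochner formula on $(X,\omega_{\phi_{[i]}})$ to $v_{[i]}$, expressing $\int_X|\Delta_{[i]}v_{[i]}+(\p\r_{[i]},\p\overline{v_{[i]}})_{[i]}|^2\mu$ as $\int_X|\bar\nabla_{[i]}\bar\nabla_{[i]}v_{[i]}|_{[i]}^2\,\mu$ plus a curvature term built from $\Ric(\omega_{\phi_{[i]}})-\dd\r_{[i]}$ paired with $\sqrt{-1}\,\p v_{[i]}\wedge\bp\overline{v_{[i]}}$. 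By \eqref{pftric} this curvature term is $\sum_j$ of the pairing of $\omega_{\phi_{[j]}}$ with $\sqrt{-1}\,\p v_{[i]}\wedge\bp\overline{v_{[i]}}$; the summand $j=i$ is exactly $\int_X|\p v_{[i]}|_{[i]}^2\,\mu$, and the summands $j\ne i$ are $\ge0$ because both $(1,1)$-forms are $\ge0$. Combining these $N$ identities with $\int_X|\p v_{[i]}|_{[i]}^2\mu=-\int_X\big(\Delta_{[i]}v_{[i]}+(\p\r_{[i]},\p\overline{v_{[i]}})_{[i]}\big)\overline{v_{[i]}}\,\mu$ and with the associated Bochner--Kodaira commutation identities (rewritten via \eqref{pftric}), then summing over $i$, produces \eqref{pfkey}; the point is that the off-diagonal curvature contributions ($j\ne i$) are exactly what recombine the $N$ pieces into the $\mu$-variance of $S$. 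This step is, in substance, \cite[Lemmas~3 and~4]{Hul17} together with \cite[Theorem~1.3]{Pin18}. Equality in \eqref{pfkey} forces $\bar\nabla_{[i]}\bar\nabla_{[i]}v_{[i]}=0$ for every $i$, i.e.\ each $V_{[i]}$ holomorphic, and the vanishing of the off-diagonal terms forces $V_{[1]}=\cdots=V_{[N]}$. Conversely, if $V:=V_{[1]}=\cdots=V_{[N]}$ is holomorphic, contracting $V$ into \eqref{pftric} gives $i_V(\Ric(\omega_{\phi_{[i]}})-\dd\r_{[i]})=\sqrt{-1}\,\bp S$ for all $i$, so the standard Lie-derivative identity for holomorphy potentials relative to the volume form $\mu$ gives $\Delta_{[i]}v_{[i]}+(\p\r_{[i]},\p\overline{v_{[i]}})_{[i]}=-\big(S-\int_X S\,\mu\big)$, the same for all $i$, whence $\cP_{\bphi}\bv=0$. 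When $\lambda=-1$ or $\Aut_0(X)$ is trivial there is no nonzero holomorphic vector field, so this set reduces to $\C^N$.

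The main obstacle is the coupled inequality \eqref{pfkey}. The naive route --- Lichnerowicz's bound $\lambda_1(-\Delta_{[i]},\mu)\ge1$, which follows merely from $\Ric(\omega_{\phi_{[i]}})-\dd\r_{[i]}\ge\omega_{\phi_{[i]}}$ --- yields only $\sum_i\int_X|\p v_{[i]}|_{[i]}^2\mu\ge\sum_i\mathrm{Var}_\mu(v_{[i]})$, and $\sum_i\mathrm{Var}_\mu(v_{[i]})$ does \emph{not} in general dominate $\mathrm{Var}_\mu(\sum_j v_{[j]})$ (already for $N=2$, $v_{[1]}=v_{[2]}$). One genuinely has to retain the coupling, i.e.\ organize the $N$ weighted Bochner formulas so that the cross-curvature terms $\sum_{j\ne i}$ reassemble $\mathrm{Var}_\mu(S)$; this is where the delicacy of Hultgren's and Pingali's arguments lies, and it is the only non-routine point.
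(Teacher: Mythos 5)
Parts (1), (2), and the $\lambda=-1$ case of (3) match the paper's argument and are fine. For the crucial $\lambda=1$ case, however, your proposed route is genuinely different from the paper's and its central step is not established. You aim to prove non-positivity of $\cP_{\bphi}$ directly, by establishing the ``coupled Lichnerowicz inequality'' \eqref{pfkey}: $\sum_i\int_X|\p v_{[i]}|_{[i]}^2\,\mu\ge\mathrm{Var}_\mu\big(\sum_j v_{[j]}\big)$ for arbitrary smooth $\bv$. But you never prove \eqref{pfkey}; you correctly note that the naive Lichnerowicz bound fails, say the ``cross-curvature terms reassemble'', and defer to Hultgren and Pingali. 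The difficulty is real and your sketch does not resolve it: the weighted Bochner formula on $(X,\omega_{\phi_{[i]}},\mu)$ controls $\int_X|\Delta_{[i]}v_{[i]}+(\p\r_{[i]},\p\overline{v_{[i]}})_{[i]}|^2\,\mu$, not $\int_X|\p v_{[i]}|_{[i]}^2\,\mu$, and for an \emph{arbitrary} $\bv$ the former quantity has no useful relation to the quadratic form $(\cP_{\bphi}\bv,\bv)_{\bmu}$. The Bochner input is only effective when $\bv$ is a kernel (or eigen-)vector, because then $\Delta_{[i]}v_{[i]}+(\p\r_{[i]},\p\overline{v_{[i]}})_{[i]}$ has the universal form $-\sum_j v_{[j]}+\text{const}$, independent of $i$, which is exactly what allows the contributions to be compared across $i$.

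The paper sidesteps \eqref{pfkey} entirely with a continuity argument: introduce $\cP_{s,\bphi}$ by scaling the zero-order coupling by $s\in[0,1]$, show that for $s\in[0,1)$ every element of $\Ker\cP_{s,\bphi}$ has constant components (via a differentiated Bochner identity, manipulated with the pointwise Cauchy--Schwarz estimate $|(\p\overline{v_{[i]}},\p\overline{v_{[j]}})_{[i]}|\le|V_{[i]}|_{[j]}\,|\p\overline{v_{[j]}}|_{[j]}$, culminating in inequality \eqref{efk}), conclude that the nonzero eigenvalues of $\cP_{s,\bphi}$ never cross the origin, and deduce non-positivity at $s=1$ by continuity of the spectrum; the kernel characterization and the equality $V_{[1]}=\cdots=V_{[N]}$ are then extracted from the $s\to1$ degeneration by applying $L_{V_{[1]}}$ to the identities $e^{\rho_{[i]}}\MA(\phi_{[i]})=\mu$ and comparing. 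Your final chain (``the vanishing of the off-diagonal terms forces $V_{[1]}=\cdots=V_{[N]}$'' and the ``Conversely'' paragraph) is likewise asserted without the Lie-derivative and subtraction argument the paper actually runs. In short, the overall shape is right and the references you cite are the right ones, but the proposal has a gap exactly at the coupled positivity step; to close it you should either prove \eqref{pfkey} honestly (not obviously possible without restricting to eigenvectors), or adopt a deformation/continuity argument as the paper does, or alternatively invoke Berndtsson's convexity of the Ding functional along subgeodesics, which the paper notes in a remark gives non-positivity directly.
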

\begin{proof}
The property (1) follows from the ellipticity of each $\Delta_{[i]}$. Also $\cP_{\bphi}$ is self-adjoint since each $\Delta_{[i]}+(\p \r_{[i]}, \p \cdot)_{[i]} \colon C^\infty(X;\C) \to C^\infty(X;\C)$ is self-adjoint with respect to the $L^2$-inner product defined by the measure $e^{\r_{[i]}}\MA(\phi_{[i]})=\mu(\bphi)$, which gives the proof of (2). Now let us prove (3) in the $\lambda=\pm 1$ cases separately.

\vspace{2mm}

\textbf{Case 1 ($\lambda=-1$):}
Assume that $\cP_{\bphi}(\bv)=\g \bv$ for some $\g \in \R$. By using \eqref{eqmeas} and integrating by parts, we get
\begin{eqnarray*}
\sum_j \int_X |\p \overline{v_{[j]}}|_{[j]}^2 \mu (\bphi)&=& -\sum_j \int_X \overline{\big( \Delta_{[j]} v_{[j]}+(\p \r_{[j]}, \p \overline{v_{[j]}})_{[j]} \big)} \cdot v_{[j]} \mu (\bphi)\\
&=& -\int_X \bigg| \sum_j v_{[j]} \bigg|^2 \mu (\bphi)+\bigg| \int_X \sum_j v_{[j]} \mu(\bphi) \bigg|^2-\g \int_X \sum_j |v_{[j]}|^2 \mu(\bphi)\\
&\leq& -\g \int_X \sum_j |v_{[j]}|^2 \mu(\bphi).
\end{eqnarray*}
Thus $\g \leq 0$ and $\g=0$ if and only if $v_{[i]}$'s are constants.

\vspace{2mm}

\textbf{Case 2 ($\lambda=1$):}
For $s \in [0,1]$, we define
\[
\cP_{s, \bphi}(\bv):=
\begin{pmatrix}
\Delta_{[1]} v_{[1]}+(\p \r_{[1]}, \p \overline{v_{[1]}})_{[1]}+s \sum_j v_{[j]}-s \sum_j \int_X v_{[j]}\mu(\bphi)\\
\vdots \\
\Delta_{[N]} v_{[N]}+(\p \r_{[N]}, \p \overline{v_{[N]}})_{[N]}+s \sum_j v_{[j]}-s \sum_j \int_X v_{[j]}\mu(\bphi)
\end{pmatrix}.
\]
So $\{\cP_{s, \bphi} \}_{s \in [0,1]}$ is a continuous path of elliptic operators from the non-positive operator $\cP_{0, \bphi}$ to $\cP_{1, \bphi}=\cP_{\bphi}$. Now we consider the kernel of $\cP_{s, \bphi}$: we assume that an element $\bv \in (C^\infty(X; \C))^N$ satisfies $\cP_{s, \bphi}(\bv)={\bf 0}$. In particular, we have
\begin{equation} \label{equivlap}
\Delta_{[1]} v_{[1]}+(\p \r_{[1]}, \p \overline{v_{[1]}})=\ldots=\Delta_{[N]} v_{[N]}+(\p \r_{[N]}, \p \overline{v_{[N]}})=-s \sum_j v_{[j]}+s \int_X \sum_j v_{[j]} \mu(\bphi).
\end{equation}
Applying $\nabla_{[i],\bar{m}}$ to the $i$-th equation of $\cP_{s, \bphi}(\bv)={\bf 0}$ and commuting derivatives, we get
\begin{eqnarray*}
0 &=&\nabla_{[i],\bar{m}} \bigg(\Delta_{[i]} v_{[i]}+(\p \r_{[i]}, \p \overline{v_{[i]}})_{[i]}+s \sum_j v_{[j]} \bigg) \\
&=& g_{[i]}^{s \bar{u}} \nabla_{[i],s} \nabla_{[i],\bar{u}} \nabla_{[i],\bar{m}} v_{[i]}-\Ric(\omega_{\phi_{[i]}})_{s \bar{m}} V_{[i]}^s+g_{[i]}^{s \bar{u}} \nabla_{[i],s} \nabla_{[i],\bar{m}} \r_{[i]} \cdot \nabla_{[i], \bar{u}} v_{[i]}\\
&+& g_{[i]}^{s \bar{u}} \nabla_{[i],s} \r_{[i]} \cdot \nabla_{[i],\bar{m}} \nabla_{[i], \bar{u}} v_{[i]}+s \sum_j \nabla_{[i], \bar{m}} v_{[j]}\\
&=& g_{[i]}^{s \bar{u}} \nabla_{[i],s} \nabla_{[i], \bar{u}} \nabla_{[i],\bar{m}} v_{[i]}+g_{[i]}^{s \bar{u}} \nabla_{[i],s} \r_{[i]} \cdot \nabla_{[i],\bar{m}} \nabla_{[i], \bar{u}} v_{[i]}-\sum_j g_{[j], s \bar{m}} V_{[i]}^s+s \sum_j \nabla_{[i], \bar{m}} v_{[j]}.
\end{eqnarray*}
Multipling $g_{[i]}^{h \bar{m}}\nabla_{[i], h} \overline{v_{[i]}} \mu(\bphi)$ and integrating by parts, we get
\begin{eqnarray*}
\int_X |\overline{\nabla}_{[i]} V_{[i]}|_{[i]}^2 \mu(\bphi) &=&
\int_X |\overline{\nabla}_{[i]} \overline{\nabla}_{[i]} v_{[i]}|_{[i]}^2 \mu(\bphi) \\
&=&\sum_j \bigg[ s \int_X (\p \overline{v_{[i]}}, \p \overline{v_{[j]}})_{[i]} \mu(\bphi)-\int_X |V_{[i]}|_{[j]}^2 \mu(\bphi) \bigg] \\
&=&\sum_j \bigg[ s \int_X |\p \overline{v_{[j]}}|_{[j]}^2 \mu(\bphi)-\int_X |V_{[i]}|_{[j]}^2 \mu(\bphi) \bigg],
\end{eqnarray*}
where we used \eqref{equivlap} in the last equality. Then we find that this integral is non-positive. Indeed, if we choose a normal coodinate $(z_1,\ldots, z_n)$ with respect to $\omega_{\phi_{[j]}}$ such that $\omega_{\phi_{[i]}}$ is diagonal with eigenvalues $\b_1,\ldots,\b_n$ at a point $\hat{x} \in X$, then we observe that
\begin{eqnarray*}
|(\p \overline{v_{[i]}}, \p \overline{v_{[j]}})_{[i]}| &=& \bigg| \sum_m \frac{1}{\b_m} \frac{\p \overline{v_{[i]}}}{\p z_m} \overline{\frac{\p \overline{v_{[j]}}}{\p z_m}} \bigg| \\
&\leq& \bigg( \sum_m \bigg| \frac{1}{\b_m} \frac{\bp v_{[i]}}{\bp z_{\bar m}} \bigg|^2 \bigg)^{1/2} \bigg( \sum_m \bigg| \frac{\p \overline{v_{[j]}}}{\p z_m} \bigg|^2 \bigg)^{1/2}\\
&=& |V_{[i]}|_{[j]} |\p \overline{v_{[j]}}|_{[j]}
\end{eqnarray*}
at $\hat{x}$. Thus combining with the Cauchy-Schwartz inequality we get
\begin{eqnarray*}
\int_X |\p \overline{v_{[j]}}|_{[j]}^2 \mu(\bphi)&=&\int_X (\p \overline{v_{[i]}}, \p \overline{v_{[j]}})_{[i]} \mu(\bphi)\\
&\leq& \int_X |V_{[i]}|_{[j]} |\p \overline{v_{[j]}}|_{[j]} \mu(\bphi)\\
&\leq& \bigg( \int_X |V_{[i]}|_{[j]}^2 \mu(\bphi) \bigg)^{1/2} \bigg( \int_X |\p \overline{v_{[j]}}|_{[j]}^2 \mu(\bphi) \bigg)^{1/2}.
\end{eqnarray*}
So we have
\begin{equation} \label{efk}
\int_X |\overline{\nabla}_{[i]} V_{[i]}|_{[i]}^2 \mu(\bphi) \leq (s-1) \sum_j \int_X |V_{[i]}|_{[j]}^2 \mu(\bphi).
\end{equation}
When $s \in [0,1)$, the inequality \eqref{efk} implies that $V_{[1]}=\ldots=V_{[N]}=0$, and hence $v_{[i]}$'s are constants. In particular, we find that $\Ker \cP_{s,\bphi}$ is stationary for $s \in [0,1)$, and isomorphic to the space of $N$ constant functions $\C^N$. Let $W$ be the orthogonal complement of $\C^N$ in $(C^\infty(X;\C))^N$. Then the restriction $\cP_{s,\bphi}|_W \colon W \longrightarrow W$ is invertible and $\cP_{0,\bphi}|_W$ is strictly negative. It follows that the first non-zero eigenvalue $\g_s \in \R$ of $\cP_{s,\bphi}$ ($s \in [0,1)$) does not pass through the origin (since if it occurs, the operator $\cP_{s,\bphi}|_W$ will be degenerate). Hence $\cP_{s,\bphi}$ is non-positive for $s \in [0,1]$. When the parameter $s$ reaches to one, we may have $\g_s \to 0$ and the inequality \eqref{efk} only implies that $V_{[i]}$'s are holomorphic for $s=1$. Then we set a function $\theta_{V_{[j]}}(\omega_{[i]}) \in C^\infty(X;\C)$ (uniquely determined modulo constant) by
\[
i_{V_{[j]}} \omega_{[i]}=\sqrt{-1} \bp \theta_{V_{[j]}}(\omega_{[i]}).
\]
In particular, we have $v_{[i]}= \theta_{V_{[i]}}(\omega_{[i]})$ (mod. const.) for $i=1,\ldots,N$. Since $V_{[1]}$ is holomorphic, applying $L_{V_{[1]}}$ to $e^{\rho_{[1]}(\bphi)} \MA(\phi_{[1]})=\mu(\bphi)$, we get
\[
\Delta_{[1]} v_{[1]}+(\p \r_{[1]},\p \overline{v_{[1]}})_{[1]}+\sum_j \theta_{V_{[1]}}(\omega_{[j]})=\text{(const)}.
\]
Subtracting from the first equation of $\cP_{\bphi}(\bv)={\bf 0}$, we get
\begin{equation} \label{elot}
\sum_{j}v_{[j]}=\sum_{j} \theta_{V_{[1]}}(\omega_{[j]}) \quad \text{(mod. const)}.
\end{equation}
Again, since $V_{[1]}$ is holomorphic, applying $L_{V_{[1]}}$ to $e^{\rho_{[2]}(\bphi)} \MA(\phi_{[2]})=\mu(\bphi)$ yields
\begin{equation} \label{seqlap}
\Delta_{[2]} \theta_{V_{[1]}}(\omega_{[2]})+(\p \r_{[2]},\p \overline{\theta_{V_{[1]}}(\omega_{[2]})})_{[2]}+\sum_j \theta_{V_{[1]}}(\omega_{[j]})=\text{(const)}.
\end{equation}
Set $f:=\theta_{V_{[1]}}(\omega_{[2]})-v_{[2]}$ so that the type $(1,0)$ vector field of $f$ with respect to $\omega_{[2]}$ is given by $V_{[1]}-V_{[2]}$. Substracting \eqref{seqlap} from the second equation of $\cP_{\bphi}(\bv)={\bf 0}$ and using \eqref{elot}, we get
\[
\Delta_{[2]}f+(\p \r_{[2]}, \p \overline{f})_{[2]}=\text{(const)}.
\]
Multiplying $\overline{f} \mu(\bphi)$ and integrating by parts, we find that $f=\text{(const)}$, and hence $V_{[1]}=V_{[2]}$. Similarly, one can see that all $V_{[i]}$'s coincide.
\end{proof}
\begin{rem}
In \cite[Theorem 3.3]{FZ18}, they proved a similar result under an extra assumption that all $V_{[i]}$'s coincide. However, the argument in Proposition \ref{wlaplacian} (based on \cite[Lemma 3, Lemma 4]{Hul17} and \cite[Theorem 1.3]{Pin18}) shows that this assumption is satisfied automatically. Since we do not know a priori whether $V_{[i]}$'s are holomorphic, it seems to be impossible to apply the argument in \cite[Theorem 3.3]{FZ18} directly to our case.
\end{rem}
\begin{rem}
From the second variational formula of $\cD$, the non-positivity of $\cP_{\bphi}$ can be seen as the convexity of Ding functional \cite{HN17} at the linearized level, which comes from a more general convexity property due to Berndtsson \cite{Bern15}.
\end{rem}
%==============Section 3==========================
\section{Geometric quantization} \label{Gquant}
%==============Subsection 3.1==========================
\subsection{Setup of spaces and functionals}
\subsubsection{For a single polarization} \label{sfspl}
First, let $(X,L)$ be a polarized K\"ahler manifold. We take a reference fiber metric $\hat{h}$ with positive curvature $\hat{\omega} \in 2\pi c_1(L)$. For any integer $k$, we define the space of {\it Bergman metrics} (at level $k$) by
\[
\cH^{(k)}:=\{H | \text{$H$ is a Hermitian form on $H^0(X,L^{\otimes k})$}\}.
\]
Set $\GL^{(k)}:=\GL(H^0(X,L^{\otimes k});\C)$. Then the group $\GL^{(k)}$ acts on $\cH^{(k)}$ and for any $H \in \cH^{(k)}$, we have an isomorphism
\[
\cH^{(k)} \simeq \GL^{(k)}/U_H^{(k)},
\]
where $U_H^{(k)}$ denotes the unitary subgroup with respect to $H$.
In particular, the tangent space at $H$ is isomorphic to $\sqrt{-1} \fu_H^{(k)}:=\sqrt{-1} \Lie(U_H^{(k)})$. The space $\cH^{(k)}$ has a natural structure of Riemannian symmetric space
\[
\langle A, B \rangle_H:=\frac{1}{k^2 D^{(k)}} \Tr (A \circ B), \quad A, B \in \sqrt{-1} \fu_H^{(k)},
\]
which defines a distance function denoted by $\dist^{(k)}$. Also we denote the geodesic ball of radius $C$ centered at $H$ by $B_H(C)$. If we further take an $H$-orthonormal basis (written as ONB for short) $(s_\a)$, then the elements $A$, $B$ are identified with matrices and the above product is just the trace of the matrix $AB$.
For any $s \in H^0(X,L^{\otimes k})$ and $\phi \in \cH$, we write
\[
|s|_{k \phi}^2:=|s|_{\hat{h}^{\otimes k}}^2 e^{-k\phi}
\]
for simplicity. Let $\nu$ be a probability measure on $X$. For $\phi \in \cH$, we define an element $\Hilb_\nu^{(k)}(\phi) \in \cH^{(k)}$ as the $L^2$-inner product with respect to $\phi$ and $\nu$:
\[
\|s\|_{\Hilb_\nu^{(k)}(\phi)}^2:=\int_X |s|_{k\phi}^2 \nu, \quad s \in H^0(X,L^{\otimes k}).
\]
Conversely, for any $H \in \cH^{(k)}$, we set
\[
\FS^{(k)}(H):=\frac{1}{k} \log \bigg( \frac{1}{D^{(k)}} \sum_{\a=1}^{D^{(k)}} |s_\a|_{k0} ^2 \bigg),
\]
where $(s_\a)$ is any $H$-ONB of $H^0(X,L^{\otimes k})$ and
\[
D^{(k)}:=\dim H^0(X, L^{\otimes k}).
\]
One can easily see that the definition of $\FS^{(k)}(H)$ does not depend on the choice of $(s_\a)$. For any $H \in \cH^{(k)}$, let $M(H) \colon \P^\ast(H^0(X, L^{\otimes k})) \to (\sqrt{-1}\fu_H^{(k)})^\ast \simeq \sqrt{-1} \fu_H^{(k)}$ be ($\sqrt{-1}$-times of) the moment map for the standard $U_H^{(k)}$-action on $\P^\ast(H^0(X, L^{\otimes k}))$ with respect to the Fubini-Study metric $\omega_{\aFS(H)} \in 2 \pi c_1(\P^\ast(H^0(X,L^{\otimes k})), \cO(1))$ determined by $H$ (where we distinguish $\omega_{\aFS(H)}$ from $\omega_{\FS^{(k)}(H)}$, the restriction of $\frac{1}{k} \omega_{\aFS(H)}$ to $X$). So if we take an $H$-ONB $(s_\a)$, $M(H)$ has a matrix representation
\[
M(H):= \frac{(s_\a, s_\b)}{\sum_{\g=1}^{D^{(k)}}|s_\g|^2}.
\]
For any $A \in \sqrt{-1} \fu_H^{(k)}$, Let $\xi_A$ be a vector field on $\P^\ast(H^0(X, L^{\otimes k}))$ generated by $A$. Then
\[
h_H(A):=\Tr(A \circ M(H))
\]
is a hamiltonian for $\xi_A$ with respect to $\omega_{\aFS(H)}$, \ie $i_{\xi_A} \omega_{\aFS(H)}=\sqrt{-1} \bp h_H(A)$.
Also we set
\[
\overline{M}_\nu (H):=\int_X M(H) \nu.
\]
Now fix a reference metric on each level
\[
\hat{H}^{(k)}:=\Hilb_{\MA(0)}^{(k)}(0),
\]
and define the {\it quantized Aubin-Mabuchi functional} as
\[
\AM^{(k)}(H)=-\frac{1}{kD^{(k)}} \log \det (H \cdot (\hat{H}^{(k)})^{-1}),
\]
where $H \cdot (\hat{H}^{(k)})^{-1}$ denotes the endmorphism of $H^0(X, L^{\otimes k})$, obtained by switching the type of tensors. Meanwhile, any geodesic in $\cH$ are called the {\it Bergman geodesic}, which is expressed as $H(t):=\exp(-tA) \cdot H$ for some $A \in \sqrt{-1} \fu_H^{(k)}$. Then we can take an $H(0)$-orthonormal and $H(t)$-orthogonal basis $(s_\a)$ so that the element $A$ can be expressed as a diagonal matrix $(A_{\a \a})$. A simple computation shows that
\[
\ddt \AM^{(k)}(H(t))=\frac{1}{kD^{(k)}} \sum_\a A_{\a \a}.
\]
In particular, the functional $\AM^{(k)}$ is affine along Bergman geodesics. With the inner product $\langle \cdot, \cdot \rangle_H$, the variation of $\AM^{(k)}$ is given by
\begin{equation} \label{dqam}
\d \AM^{(k)}|_H (A)=k \langle A, \Id \rangle_H.
\end{equation}
Finally, we define the quantized $J$-functional as
\[
J^{(k)}=-\AM^{(k)}+\hat{L} \circ \FS^{(k)}.
\]
\subsubsection{For coupled polarizations} \label{sfcc}
Let $X$ be a compact K\"ahler manifold with $\lambda c_1(X)>0$ with $\lambda=\pm 1$, and $(L_{[i]})_{i=1,\ldots,N}$ a rational decomposition of $K_X^{\otimes -\lambda}$, \ie each $L_{[i]}$ is a $\Q$-ample line bundle with an isomorphism
\begin{equation} \label{lbiso}
K_X^{\otimes -\lambda} \simeq \otimes_i L_{[i]}.
\end{equation}
We assume that each $L_{[i]}$ is a line bundle for simplicity. Let $(\hat{h}_{[i]})_{i=1,\ldots, N}$ be an $N$-tuple of Hermitian metrics on $(L_{[i]})$ with positive curvature $\hat{\omega}_{[i]} \in 2 \pi c_1(L_{[i]})$. Let $\bnu=(\nu_{[i]})_{i=1,\ldots,N}$ be an $N$-tuple of probability measures, where each $\nu_{[i]}$ is allowed to depend on $\bphi \in \prod_i \cH_{[i]}$. Write
\[
\Hilb_{[i],\bnu}^{(k)}(\bphi):=\Hilb_{[i],\nu_{[i]}(\bphi)}^{(k)}(\phi_{[i]})
\]
for simplicity and set
\[
\bHilb_{\bnu}^{(k)}(\bphi):=(\Hilb_{[1],\bnu}^{(k)}(\bphi), \ldots, \Hilb_{[N],\bnu}^{(k)}(\bphi)), \quad \bphi \in \prod_i \cH_{[i]}.
\]
On the other hand, the Fubini-Study map $\bFS^{(k)}$ is just defined by the product of $\FS_{[i]}^{(k)}$ ($i=1,\ldots, N$) as follows:
\[
\bFS^{(k)}(\bH):=(\FS_{[1]}^{(k)}(H_{[1]}),\ldots, \FS_{[N]}^{(k)}(H_{[N]})), \quad \bH=(H_{[i]}) \in \prod_i \cH_{[i]}^{(k)}.
\]
With these maps, we define
\[
S_{[i], \bnu}^{(k)}(\bphi):=\FS_{[i]}^{(k)}(\Hilb_{[i],\bnu}^{(k)}(\bphi)), \quad T_{[i], \bnu}^{(k)}(\bH):=\Hilb_{[i],\bnu}^{(k)}(\FS_{[i]}^{(k)}(\bH)),
\]
\[
\tilde{S}_{\bnu}^{(k)}(\bphi):=(S_{[1], \bnu}^{(k)}(\bphi), \ldots, S_{[1], \bnu}^{(k)}(\bphi)), \quad 
\tilde{T}_{\bnu}^{(k)}(\bH):=(T_{[1], \bnu}^{(k)}(\bH), \ldots, T_{[N], \bnu}^{(k)}(\bH)).
\]
We set
\[
\overline{M}_{[i], \bnu}(\bH):=\overline{M}_{[i], \nu_{[i]} (\bFS^{(k)}(\bH))}(H_{[i]}), \quad \bH \in \prod_i \cH_{[i]}^{(k)}.
\]
We remark that if the $N$-tuple of measure $\bnu$ does not depend on $\bphi$, then everything defined above completely decouples. We write the $N$ copy of the canonical measure as 
\[
\bmu=(\mu, \ldots, \mu).
\]
Also, for $\bphi=(\phi_{[i]}) \in \prod_i \cH_{[i]}$, we set
\[
\bMA(\bphi):=(\MA(\phi_{[1]}), \ldots, \MA(\phi_{[N]})).
\]
\begin{dfn}
We say that an $N$-tuple of Hermitian forms $\bH \in \prod_i \cH_{[i]}^{(k)}$ is balanced (at level $k$) if it satisfies
\begin{equation} \label{bmetric}
\tilde{T}_{\bmu}^{(k)}(\bH)=\bH.
\end{equation}
Here $\bmu=(\mu,\ldots,\mu)$ and the measure $\mu$ is defined by \eqref{canmeasure}. The balanced condition \eqref{bmetric} is equivalent to
\[
T_{[i],\bmu}^{(k)}(H_{[i]})=\Hilb_{[i],\bmu}^{(k)}(\FS_{[i]}^{(k)}(H_{[i]}))=H_{[i]}
\]
for all $i$, where $\bmu$ depends on all the $\FS_{[i]}^{(k)}(H_{[i]})$'s at once.
\end{dfn}
We also say that an $N$-tuple $\bphi \in \prod_i \cH_{[i]}$ is balanced at level $k$ if
\[
\tilde{S}_{\bmu}^{(k)} (\bphi)=\bphi.
\]
Then we have an isomorphism
\[
\bigg\{\bphi \in \prod_i \cH_{[i]} \bigg| \text{$\bphi$ is balanced at level $k$} \bigg\} \simeq \big\{\bH \in \prod_i \cH_{[i]}^{(k)} \bigg| \text{$\bH$ is balanced} \bigg\}
\]
given by the bijections $\bHilb_{\bmu}^{(k)}$ and $\bFS^{(k)}$.
\begin{rem}
Independent of one's interest, the iteration for $\tilde{T}_{\bmu}^{(k)}$ defines a dynamical system $\{ \big(\tilde{T}_{\bmu}^{(k)} \big)^s \}_{s=1,2,\ldots}$ on $\cH_{[i]}^{(k)}$ and balanced metrics are characterized as the unique fixed point modulo automorphisms. Also it seems to be a discretization of the balancing flow \eqref{balflow}.
\end{rem}
For $\bH \in \prod_i \cH_{[i]}^{(k)}$, we set
\[
\cJ^{(k)}(\bH):=\sum_i J_{[i]}^{(k)}(H_{[i]}),
\]
and define the {\it quantized Ding functional} by
\[
\cD^{(k)}(\bH):=-\sum_i \AM_{[i]}^{(k)}(H_{[i]})+\cL(\bFS^{(k)}(\bH)).
\]

Finally, we consider the case when $G$ is not trivial. Each $G$-linearization on $L_{[i]}$ induces a linear transformation on $H^0(X, L_{[i]}^{\otimes k})$ for sufficiently large $k$. We set
\[
\cJ_G^{(k)}(\bH):=\inf_{f \in G}\cJ^{(k)}(f^\ast \bH).
\]
\begin{prop} \label{qdf}
The quantized Ding functional $\cD^{(k)}$ satisfies the following properties:
\begin{enumerate}
\item The variational formula for $\cD^{(k)}$ is given by
\[
\d \cD^{(k)}|_{\bH}(\bA)=k \langle \bA, (D_{[i]}^{(k)} \overline{M}_{[i], \bmu}(\bH)-\Id) \rangle_{\bH}.
\]
In particular, an $N$-tuple $\bH \in \prod_i \cH_{[i]}^{(k)}$ is balanced if and only if it is a critical point of $\cD^{(k)}$.
\item $\cD^{(k)}$ is convex along Bergman geodesics, and to be affine if and only if it arises as a flow generated by a holomorphic vector field.
\item If $\bH, \bH^\dag \in \prod_i \cH_{[i]}^{(k)}$ are balanced, then there exists some $f \in G$ such that $\bH=f^\ast \bH^\dag$.
\end{enumerate}
\end{prop}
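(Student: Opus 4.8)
The plan is to derive (1) by differentiating the two structural terms of $\cD^{(k)}$ directly, to deduce (2) from the affineness of $\AM^{(k)}$ along Bergman geodesics together with a convexity property of $\cL\circ\bFS^{(k)}$, and to obtain (3) from (1) and (2) by joining two balanced metrics with a Bergman geodesic.

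\emph{Part (1).} Write $\cD^{(k)}=-\sum_i\AM_{[i]}^{(k)}+\cL\circ\bFS^{(k)}$. By \eqref{dqam} the variation of $-\sum_i\AM_{[i]}^{(k)}$ is $-k\langle\bA,\Id\rangle_{\bH}$, with $\Id=(\Id,\ldots,\Id)$. For the second term I use the chain rule with two elementary identities: the variation $\d\cL|_{\bphi}(\d\bphi)=\sum_i\int_X\d\phi_{[i]}\,\mu(\bphi)$, immediate from the definitions of $\cL$ and of $\mu$ in \eqref{canmeasure}; and $\d\FS_{[i]}^{(k)}|_{H_{[i]}}(A_{[i]})=\tfrac1k h_{H_{[i]}}(A_{[i]})$, understood as the Hamiltonian $h_{H_{[i]}}(A_{[i]})=\Tr(A_{[i]}\circ M_{[i]}(H_{[i]}))$ restricted to $X$, which one gets by differentiating $\FS_{[i]}^{(k)}$ along the Bergman geodesic $H_{[i]}(t)=\exp(-tA_{[i]})\cdot H_{[i]}$ in a simultaneously $H_{[i]}$-orthonormal and $H_{[i]}(t)$-orthogonal basis. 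Substituting, using $\int_X M_{[i]}(H_{[i]})\,\mu(\bFS^{(k)}(\bH))=\overline{M}_{[i],\bmu}(\bH)$ and the normalization of $\langle\cdot,\cdot\rangle_{H_{[i]}}$, yields $\d(\cL\circ\bFS^{(k)})|_{\bH}(\bA)=k\langle\bA,(D_{[i]}^{(k)}\overline{M}_{[i],\bmu}(\bH))\rangle_{\bH}$, hence the stated formula. Since $\langle\cdot,\cdot\rangle_{\bH}$ is non-degenerate, $\d\cD^{(k)}|_{\bH}=0$ iff $D_{[i]}^{(k)}\overline{M}_{[i],\bmu}(\bH)=\Id$ for every $i$; and since the Gram matrix of $T_{[i],\bmu}^{(k)}(H_{[i]})=\Hilb_{[i],\bmu}^{(k)}(\FS_{[i]}^{(k)}(H_{[i]}))$ in an $H_{[i]}$-orthonormal basis is precisely $D_{[i]}^{(k)}\overline{M}_{[i],\bmu}(\bH)$, this is exactly the balanced condition \eqref{bmetric}.

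\emph{Part (2).} Along a Bergman geodesic $\bH(t)=\exp(-tA)\cdot\bH(0)$ each $\AM_{[i]}^{(k)}$ is affine, so it suffices to treat $t\mapsto\cL(\bFS^{(k)}(\bH(t)))$. Put $\phi_{[i]}(t):=\FS_{[i]}^{(k)}(H_{[i]}(t))$; working in a basis diagonalizing $A_{[i]}$ one sees that $\phi_{[i]}(t,x)$ is, for each $x$, the logarithm of a sum of positive exponentials of affine functions of $t$, hence convex in $t$, and that the complexification (replacing $t$ by $\tau$ in a strip) gives semipositively curved metrics on the $L_{[i]}$ over $X$. When $\lambda=-1$, set $u(t,x):=\sum_i\phi_{[i]}(t,x)$ and $F(t):=\int_Xe^{u(t)}\theta_0^n$, so $\cL(\bFS^{(k)}(\bH(t)))=\log F(t)$; the Cauchy--Schwarz inequality and $u_{tt}\geq0$ give $F''F-(F')^2\geq\big(\int_Xu_{tt}e^{u}\theta_0^n\big)F\geq0$, so $\log F$ is convex, and equality forces $u_{tt}\equiv0$ and $u_t$ constant on $X$, which (as $G$ is trivial here) means the geodesic is constant. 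When $\lambda=1$, the metric on $\otimes_iL_{[i]}=K_X^{\otimes-1}$ over $X\times(\text{strip})$ with local potential $\sum_i\phi_{[i]}(\tau)$ plus the fixed reference is semipositively curved, and $e^{-\sum_i\phi_{[i]}(\tau)}\theta_0^n$ is, up to a $\tau$-independent constant, its associated volume form; Berndtsson's subharmonicity theorem then gives that $\tau\mapsto-\log\int_Xe^{-\sum_i\phi_{[i]}(\tau)}\theta_0^n$ is subharmonic, and its restriction to the real axis equals $\cL(\bFS^{(k)}(\bH(t)))$ up to a constant, whence convexity. The rigidity part of Berndtsson's theorem shows that in the equality case the family is generated by a holomorphic vector field on $X$, which matched with its lift to $\oplus_iH^0(X,L_{[i]}^{\otimes k})$ identifies $A$ with an element of $\Lie(G)$ (up to central scalings acting trivially on $X$). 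The converse direction is immediate, since along a flow generated by such an element all the constituent functionals are affine in $t$.

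\emph{Part (3) and the main obstacle.} Each $\cH_{[i]}^{(k)}$ is a simply connected nonpositively curved symmetric space, hence Hadamard, so $\bH$ and $\bH^\dag$ are joined by a unique Bergman geodesic $\bH(t)$, $t\in[0,1]$, with $\bH(0)=\bH$ and $\bH(1)=\bH^\dag$. By (1) both endpoints are critical points of $\cD^{(k)}$, so $\ddt\cD^{(k)}(\bH(t))$ vanishes at $t=0$ and $t=1$; by the convexity in (2) this derivative is monotone, hence identically zero, and $\cD^{(k)}$ is affine along $\bH(t)$. By (2) the geodesic is then a flow generated by a holomorphic vector field, so $\bH(1)=f^*\bH(0)$ for some $f\in G$, i.e.\ $\bH=(f^{-1})^*\bH^\dag$ with $f^{-1}\in G$. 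The substantive step is the Fano case ($\lambda=1$) of part (2): both the convexity of $\cL\circ\bFS^{(k)}$ and, above all, the extraction of a genuine holomorphic vector field from the equality case rest on Berndtsson's positivity and rigidity theorem rather than on any formal manipulation, and the identification of the geodesic generator $A$ with a lift of an element of $\Lie(G)$ is the delicate point. Everything else — (1), the $\lambda=-1$ case of (2), and the argument in (3) — is routine variational calculus on the symmetric spaces $\cH_{[i]}^{(k)}$.
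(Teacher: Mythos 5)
Your proof follows essentially the same strategy as the paper for parts (1) and (3), and part of (2); the differences are in the treatment of the equality case of (2), which is where the real content lies and where your argument is too compressed.

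Parts (1) and (3) agree with the paper's proof. Your $\lambda=-1$ argument in part (2) is a nice self-contained alternative (pointwise convexity of each $\FS_{[i]}^{(k)}(H_{[i]}(t))$ plus Cauchy--Schwarz), whereas the paper simply invokes Berndtsson uniformly across both signs; your version is arguably cleaner there, and the equality case in the general type setting is indeed easy because $G$ is trivial.

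Where you are too quick is the $\lambda=1$ equality case. You jump from ``Berndtsson's rigidity gives a holomorphic vector field $V$ generating the family'' to ``$A$ is identified with an element of $\Lie(G)$,'' but two intermediate steps are needed and neither is spelled out. First, Berndtsson's rigidity is applied to the \emph{sum} $\psi(t)=\sum_j\FS_{[j]}^{(k)}(H_{[j]}(t))$, which is a $\PSH(X,\sum_j\hat\omega_{[j]})$-subgeodesic; rigidity tells you $\psi(t)$ is a genuine geodesic pulled back by $f_t=\exp(tV)$. One must then argue that each summand $\FS_{[i]}^{(k)}(H_{[i]}(t))$ is itself a genuine $\PSH(X,\hat\omega_{[i]})$-geodesic generated by the \emph{same} $V$: since each is a subgeodesic and their sum is a geodesic, each must be affine along the complexified parameter, but the identification of a single common $V$ for all $i$ is exactly the content the paper borrows from \cite[Lemma 4.4]{HN17} and is not automatic from convexity alone. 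Second, once you know $\omega_{\FS_{[i]}^{(k)}(H_{[i]}(t))}=f_t^\ast\omega_{\FS_{[i]}^{(k)}(H_{[i]}(0))}$ for all $i$, you still need to conclude $\bH(t)=f_t^\ast\bH(0)$ as Hermitian forms, not just as K\"ahler potentials. The paper does this via the $G$-equivariance of the Kodaira embedding (so $f_t^\ast$ commutes with $\FS_{[i]}^{(k)}$, \cf \cite[Lemma 2.25]{Has15}) together with the injectivity of the Fubini--Study map (\cf \cite{Has17}). Both of these are nontrivial inputs that your sketch omits. You correctly flag this as the delicate point, but as written the argument does not actually deliver the conclusion of (2), and hence of (3).
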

\begin{proof}
(1) Let $\bH(t) \in \prod_i \cH_{[i]}^{(k)}$ be a Bergman geodesic generated by $\bA=(A_{[i]}) \in \sqrt{-1} \big( \oplus_i \fu_{H_{[i]}(0)}^{(k)} \big)$. We take an $H_{[i]}(0)$-orthonormal, and $H_{[i]}(t)$-orthogonal basis $(s_{[i],\a})_{\a=1\ldots,D_{[i]}^{(k)}}$ so that $A_{[i]}$ is expressed as the diagonal matrix $(A_{[i],\a\a})$. Then $(\exp(A_{[i],\a\a}t/2) \cdot s_{[i],\a})_{\a=1\ldots,D_{[i]}^{(k)}}$ defines an $H_{[i]}(t)$-ONB. In this setup, we can compute
\[
\ddt \FS_{[i]}^{(k)}(H_{[i]}(t))|_{t=0}=\frac{1}{k} \frac{\sum_{\a} A_{[i],\a\a} |s_{[i],\a}|^2}{\sum_\b |s_{[i],\b}|^2},
\]
and hence
\begin{eqnarray*}
\ddt \cL(\bFS^{(k)} (\bH(t)))|_{t=0} &=& \sum_i \frac{1}{k} \sum_\a A_{[i],\a\a} \int_X \frac{|s_{[i],\a}|^2}{\sum_\b |s_{[i],\b}|^2} \mu(\bFS^{(k)}(\bH(0))\\
&=& k \langle \bA, (D_{[i]}^{(k)} \overline{M}_{[i], \bmu}(\bH(0))) \rangle_{\bH(0)}.
\end{eqnarray*}
Thus combining with \eqref{dqam}, we get we get the variational formula for $\cD^{(k)}$. In particular, the Hermitian form $\bH$ is a critical point of $\cD^{(k)}$ if and only if
\[
\overline{M}_{[i], \bmu}(\bH)=\frac{\Id}{D_{[i]}^{(k)}}, \quad i=1,\ldots, N.
\]
On the other hand, for any $H_{[i]}$-orthonormal basis $(s_{[i],\a})_{\a=1,\ldots, D_{[i]}^{(k)}}$, the matrix representation of $\overline{M}_{[i], \bmu}(\bH)$ is given by
\begin{eqnarray*}
\int_X \frac{(s_{[i],\a}, s_{[i], \b})}{\sum_\g |s_{[i],\g}|^2} \mu(\bFS^{(k)}(\bH)) &=& \frac{1}{D_{[i]}^{(k)}} \int_X (s_{[i],\a}, s_{[i], \b})_{k \FS_{[i]}^{(k)}(H_{[i]})} \mu(\bFS^{(k)}(\bH)) \\
&=& \frac{1}{D_{[i]}^{(k)}} (s_{[i],\a}, s_{[i], \b})_{T_{[i],\bmu}^{(k)}(\bH)}.
\end{eqnarray*}
So if we further take $(s_{[i],\a})_{\a=1,\ldots, D_{[i]}^{(k)}}$ to be $T_{[i],\bmu}^{(k)}(\bH)$-orthogonal, then the above matrix is diagonal. Hence the Hermitian form $\bH$ is a critical point of $\cD^{(k)}$ if and only if
\[
\|s_{[i],\a}\|_{T_{[i],\bmu}^{(k)}(\bH)}=1, \quad i=1,\ldots, N, \quad \a=1,\ldots, D_{[i]}^{(k)}.
\]
Since $(s_{[i],\a})_{\a=1,\ldots, D_{[i]}^{(k)}}$ is $H_{[i]}$-ONB, this implies that
\[
T_{[i], \bmu}^{(k)}(\bH)=H_{[i]},
\]
and hence $\tilde{T}_{\bmu}^{(k)}(\bH)=\bH$, so $\bH$ is balanced. Conversely, if $\bH$ is balanced, we have $\|s_{[i],\a}\|_{T_{[i], \bmu}^{(k)}(\bH)}=\|s_{[i],\a}\|_{H_{[i]}}=1$, and hence $\overline{M}_{[i], \bmu}(\bH)=\Id/D_{[i]}^{(k)}$ as desired.

(2) By a property of the Fubini-Study map (for instance, see \cite[Section 5.1]{Bern09}), each $\FS_{[i]}^{(k)}(H_{[i]}(t))$ defines a $\PSH(X,\hat{\omega}_{[i]})$-subgeodesic. In particular, $\sum_j \FS_{[j]}^{(k)}(H_{[i]}(t))$ is also a $\PSH(X, \sum_j \hat{\omega}_{[j]})$-subgeodesic, and we can apply Berndtsson's convexity theorem \cite[Theorem 1.1]{Bern15} to find that $\cD^{(k)}(\bH(t))$ is convex. Moreover, $\cD^{(k)}(\bH(t))$ is affine if and only if $\sum_j \FS_{[j]}^{(k)}(H_{[i]}(t))$ is a $\PSH(X, \sum_j \hat{\omega}_{[j]})$-geodesic generated by a holomorphic vector field $V$. Since each $\FS_{[i]}^{(k)}(H_{[i]}(t))$ is a $\PSH(X,\hat{\omega}_{[i]})$-subgeodesic, this occurs if and only if each $\FS_{[i]}^{(k)}(H_{[i]}(t))$ is a $\PSH(X,\hat{\omega}_{[i]})$-geodesic generated by the same holomorphic vector field $V$ (see the arguments in \cite[Lemma 4.4]{HN17}). Let $f_t$ denotes the flow generated by $V$. By using the fact that $f_t$ and $\FS_{[i]}^{(k)}$ commute because the Kodaira embedding is equivariant with respect to the $G$-action (\cf \cite[Lemma 2.25]{Has15}), we observe that
\[
\bomega_{\bFS^{(k)}(\bH(t))}=f_t^\ast \bomega_{\bFS^{(k)}(\bH(0))}=\bomega_{\bFS^{(k)}(f_t^\ast \bH(0))}.
\]
Since the Fubini-Study map $\FS_{[i]}^{(k)}$ is injective (\cf \cite{Has17}), we have $\bH(t)=f_t^\ast \bH(0)$.

(3) Since we already know that the metrics $\bH, \bH^\dag \in \prod_i \cH_{[i]}^{(k)}$ are critical points of $\cD^{(k)}$, if we take a Bergman geodesic $\bH(t)$ joining them, we know that $\cD^{(k)}(\bH(t))$ is constant. Hence by (2), we obtain $\bH=f^\ast \bH^\dag$ for some $f \in G$ as desired.
\end{proof}
Following \cite{DL19}, for any Bergman geodesic ray $\{\bH(t)\} \subset \prod_i \cH_{[i]}^{(k)}$, we define the {\it radial quantized Ding functional}
\[
\cD_{\rm rad}^{(k)}(\{\bH(t)\}):= \lim_{t \to \infty} \frac{ \cD^{(k)}(\bH(t))}{t}.
\]
\begin{prop} \label{cqd}
Assume there exists a balanced metric at level $k$. Then $\cD_{\rm rad}^{(k)}(\{\bH(t)\})$ is non-negative for any Bergman geodesic ray $\{\bH(t)\} \subset \prod_i \cH_{[i]}^{(k)}$ with equality holding if and only if $\bH(t)$ arises as a flow generated by a holomorphic vector field.
\end{prop}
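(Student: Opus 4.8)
The statement is the quantized analogue of ``coupled Ding-polystability is implied by the existence of a balanced metric'', and the natural strategy is to exploit the geodesic convexity of $\cD^{(k)}$ from Proposition \ref{qdf}(2) together with the fact that an assumed balanced metric is a global minimum. Fix a balanced $\bH^{(k)}\in\prod_i\cH_{[i]}^{(k)}$. By Proposition \ref{qdf}(1) it is a critical point of $\cD^{(k)}$, so for an arbitrary $\bH'\in\prod_i\cH_{[i]}^{(k)}$ the Bergman geodesic from $\bH^{(k)}$ to $\bH'$ has vanishing $\cD^{(k)}$-derivative at its initial point, and convexity forces $\cD^{(k)}(\bH')\ge\cD^{(k)}(\bH^{(k)})$; hence $\cD^{(k)}$ is bounded below. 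For any Bergman geodesic ray $\{\bH(t)\}$, the function $g(t):=\cD^{(k)}(\bH(t))$ is convex and bounded below, so $g'(t)$ is non-decreasing and $\lim_{t\to\infty}g'(t)\ge 0$ (otherwise $g\to-\infty$); since $\cD_{\rm rad}^{(k)}(\{\bH(t)\})=\lim_{t\to\infty}g(t)/t=\lim_{t\to\infty}g'(t)$, this proves non-negativity.

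For the characterization of equality, ``$\Leftarrow$'' is immediate: if $\{\bH(t)\}$ is generated by a holomorphic vector field it extends to a Bergman geodesic line along which, by Proposition \ref{qdf}(2), $\cD^{(k)}$ is affine, and an affine function bounded below is constant, so $\cD_{\rm rad}^{(k)}(\{\bH(t)\})=0$. For ``$\Rightarrow$'', assume $\cD_{\rm rad}^{(k)}(\{\bH(t)\})=0$; then $g$ is convex with slope tending to $0$, hence non-increasing, and being bounded below it converges. Set $R:=\dist^{(k)}(\bH(0),\bH^{(k)})$. For each $T>0$ let $\sigma_T$ be the unit-speed Bergman geodesic from $\bH^{(k)}$ to $\bH(T)$, of length $\ell_T$ with $|\ell_T-T|\le R$; its initial velocity lies in the (compact) unit sphere at $\bH^{(k)}$, so along a subsequence $T_j\to\infty$ these velocities converge and $\sigma_{T_j}\to\sigma_\infty$ locally uniformly, a Bergman geodesic ray issuing from $\bH^{(k)}$. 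Convexity of $\cD^{(k)}$ along $\sigma_{T_j}$ gives $\cD^{(k)}(\sigma_{T_j}(s))\le\cD^{(k)}(\bH^{(k)})+\tfrac{s}{\ell_{T_j}}\big(g(T_j)-\cD^{(k)}(\bH^{(k)})\big)$ for $0\le s\le\ell_{T_j}$; letting $j\to\infty$ with $s$ fixed, and using that $g(T_j)$ stays bounded, we obtain $\cD^{(k)}(\sigma_\infty(s))\le\cD^{(k)}(\bH^{(k)})$ for all $s$. Since $\bH^{(k)}$ is a global minimum, $\cD^{(k)}$ is constant along $\sigma_\infty$, so by Proposition \ref{qdf}(2) the ray $\sigma_\infty$ is generated by a holomorphic vector field $V$, say $\sigma_\infty(s)=\exp(sV)^{\ast}\bH^{(k)}$.

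It remains to transfer this conclusion from $\sigma_\infty$ back to $\{\bH(t)\}$, and this is the main obstacle. Comparing the geodesics $t\mapsto\bH(t)$ and $t\mapsto\sigma_{T_j}(t\ell_{T_j}/T_j)$, which coincide at $t=T_j$ and are at distance $\le R$ at $t=0$, and using the convexity of $t\mapsto\dist^{(k)}(c_1(t),c_2(t))$ for affinely reparametrized geodesics in the nonpositively curved space $\prod_i\cH_{[i]}^{(k)}$, then letting $j\to\infty$, one gets $\dist^{(k)}(\bH(t),\sigma_\infty(t))\le R$ for all $t\ge 0$: the ray $\{\bH(t)\}$ stays at bounded distance from the holomorphic-vector-field ray $\{\exp(tV)^{\ast}\bH^{(k)}\}$. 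Applying the isometry $(\exp(-tV))^{\ast}$ of $\prod_i\cH_{[i]}^{(k)}$, the curve $t\mapsto\big(\exp(-tV)^{\ast}\circ\exp(-t\bA)\big)\cdot\bH(0)$ then stays in a bounded set, where $\bA=(A_{[i]})$ generates $\{\bH(t)\}$ and $\exp(-tV)^{\ast}$ denotes the induced one-parameter subgroup on $\prod_iH^0(X,L_{[i]}^{\otimes k})$; a Cartan-decomposition (polar) argument forces the noncompact parts of $\bA$ and of this subgroup to agree up to conjugation, whence $\{\bH(t)\}$ is itself generated by a holomorphic vector field.

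The delicate point is precisely this last rigidity step — that a Bergman geodesic ray lying at bounded distance from a holomorphic-vector-field ray is itself of that type — which rests on the parallelism (flat-strip type) structure of the symmetric space $\prod_i\GL(H^0(X,L_{[i]}^{\otimes k}))/U$. An alternative, more in the spirit of Donaldson \cite{Don01}, is to bypass $\sigma_\infty$ and compute $\cD_{\rm rad}^{(k)}(\{\bH(t)\})$ directly: writing $\bH(t)=\exp(-t\bA)\bH(0)$ with $\bA$ diagonalized in a joint eigenbasis, $\cD^{(k)}(\bH(t))$ splits as the affine contribution of $\sum_i\AM_{[i]}^{(k)}$ plus $\cL(\bFS^{(k)}(\bH(t)))$, whose asymptotic slope is controlled by the largest eigenvalues of the $A_{[i]}$ and by the integrability (a log-canonical-threshold type quantity) of the limiting Fubini–Study potentials; the equality case then corresponds exactly to the cancellation of these contributions, which occurs along holomorphic-vector-field rays. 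The non-negativity part, by contrast, requires nothing beyond geodesic convexity and boundedness below.
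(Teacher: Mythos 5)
Your argument for non-negativity is correct and in fact slightly more robust than the paper's: you observe that the existence of a balanced metric makes $\cD^{(k)}$ bounded below (since it is a critical point and $\cD^{(k)}$ is convex along Bergman geodesics), and then deduce $\lim_{t\to\infty}g'(t)\ge 0$ for any ray. The paper instead argues only for rays emanating from the balanced metric $\bH^{(k)}$ itself, where $g'(0)=0$ and convexity give the slope bound directly; the paper's closing remark about ``changing the reference metric'' does not really extend this to arbitrary initial points, and the author's footnote acknowledges as much. On that restricted class of rays your argument and the paper's agree. Your treatment of the implication ``generated by a holomorphic vector field $\Rightarrow$ slope zero'', via extension to a Bergman geodesic line which is affine and bounded below hence constant, is also correct and clean (and avoids circular use of Theorem \ref{sims}).

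The genuine gap is in your forward implication for rays not starting at the balanced metric, and you flag it yourself. The difficulty is not merely technical: in a symmetric space of noncompact type, two geodesic rays at bounded Hausdorff distance are asymptotic and span a flat strip, but that does \emph{not} force the one to be a translate of the other by the isometry group, nor does it let you promote ``$\cD^{(k)}$ constant along $\sigma_\infty$'' to ``$\cD^{(k)}$ constant along $\bH(t)$.'' Indeed from $g$ convex, bounded below, with slope tending to $0$ you only get $g$ non-increasing and convergent, not constant — so Proposition \ref{qdf}(2) cannot be invoked on $\{\bH(t)\}$ directly. Your parenthetical Cartan-decomposition/flat-strip sketch, as stated, would identify the ``noncompact parts'' only up to conjugacy, which is compatible with $\{\bH(t)\}$ not itself being a holomorphic-vector-field ray through $\bH(0)$. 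The paper sidesteps this by proving the equality case only for rays emanating from $\bH^{(k)}$ (where $g'(0)=0$ combined with $g'\to 0$ and convexity forces $g\equiv\mathrm{const}$), which is all that is actually used downstream. So your reading of the geometry is faithful and you have correctly located the missing rigidity step; but as written, the forward direction is not closed for an arbitrary initial point, and the route via the flat-strip theorem is not, by itself, a correct way to close it.
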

\begin{proof}
If the initial metric $\bH(0)$ is balanced, by the convexity of $\cD^{(k)}$ along Bergman geodesics $\bH(t)$, we know that the ratio $\cD^{(k)}(\bH(t))/t$ is increasing in $t$, thus if we take $\bH(0)$ as a balanced metric, then we have
\[
\cD_{\rm rad}^{(k)}(\{\bH(t)\})= \lim_{t \to \infty} \frac{ \cD^{(k)}(\bH(t))}{t} \geq \lim_{t \searrow 0} \frac{ \cD^{(k)}(\bH(t))}{t}=0.
\]
Further assume that $\cD_{\rm rad}^{(k)}(\{\bH(t)\})=0$. Then we find that $\cD^{(k)}(\bH(t))=0$ for all $t \geq 0$, and by Proposition \ref{qdf} (2), we have $\bH(t)=f_t^\ast \bH(0)$ for some flow $f_t$ generated by a holomorphic vector field. We note that changing the reference metric only results in an overall additive constant to the functional $\cD^{(k)}$. Thus we have the desired statement for any initial metric $\bH(0) \in \prod_i \cH_{[i]}^{(k)}$\footnote{This property is not used in later arguments since we mainly deal with the case when the balanced metric exist. Howerver, as an obstruction, Proposition \ref{cqd} should be stated for Bergman geodesics emanating from any element in $\prod_i \cH_{[i]}^{(k)}$.}.
\end{proof}
Finally, we compute the Hessian of the quantized Ding functional. For $\bH=(H_{[i]}) \in \prod_i \cH_{[i]}^{(k)}$ and $\bA=(A_{[i]}) \in \sqrt{-1} \big( \oplus_i \fu_{H_{[i]}}^{(k)} \big)$, we consider the pointwise decomposition of $\xi_{A_{[i]}}$:
\begin{equation} \label{pwdcomp}
\xi_{A_{[i]}}=\xi_{A_{[i]}}^\top+\xi_{A_{[i]}}^\bot,
\end{equation}
where $\xi_{A_{[i]}}^\top$ denotes the component of $\xi_{A_{[i]}}|_X$ which is tangent to $X$ and $\xi_{A_{[i]}}^\bot$ the component with is perpendicular to $X$ with respect to the Fubini-Study metric $\omega_{\aFS(H_{[i]})} \in 2 \pi c_1(\P^\ast(H^0(X,L_{[i]}^{\otimes k})), \cO(1))$. Then we have the following:
\begin{prop} \label{hessfqd}
We have
\[
\nabla^2 \cD^{(k)}|_{\bH}(\bA,\bA)=k^{-1} \sum_i \int_X |\xi_{A_{[i]}}^\bot|_{\aFS(H_{[i]})}^2 \mu(\bFS^{(k)}(\bH))-k^{-2}(\cP_{\bFS^{(k)}(\bH)} (h_{A_{[i]}}), (h_{A_{[i]}}))_{\bmu(\bFS^{(k)}(\bH))},
\]
where $h_{A_{[i]}}$ denotes the Hamiltonian with respect to $\xi_{A_{[i]}}$ defined in Section \ref{sfspl}.
\end{prop}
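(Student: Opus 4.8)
The plan is to compute the second derivative $\tfrac{d^2}{dt^2}\cD^{(k)}(\bH(t))|_{t=0}$ along the Bergman geodesic $\bH(t)=(\exp(-tA_{[i]})\cdot H_{[i]})$ and read off the Hessian from it, exactly paralleling the continuous computation of $\tfrac{d^2}{dt^2}\cD(\bphi)$ carried out earlier in Section~\ref{CDF}. Since $\AM^{(k)}_{[i]}$ is affine along Bergman geodesics, the term $-\sum_i \AM^{(k)}_{[i]}(H_{[i]}(t))$ contributes nothing to the second derivative, so the entire Hessian comes from $\cL(\bFS^{(k)}(\bH(t)))$. Writing $\phi_{[i]}(t):=\FS^{(k)}_{[i]}(H_{[i]}(t))$ and using the first-variation formula $\d\cL|_{\bphi}(\d\bphi)=-\lambda\sum_i\int_X \d\phi_{[i]}\,\mu(\bphi)$ together with Proposition~\ref{vfmu}, one gets
\[
\frac{d^2}{dt^2}\cL(\bphi(t)) = -\lambda\sum_i\int_X \ddot\phi_{[i]}\,\mu(\bphi) - \Big(\lambda\int_X\big(\textstyle\sum_i\dot\phi_{[i]}\big)^2\mu(\bphi) - \lambda\big(\textstyle\sum_i\int_X\dot\phi_{[i]}\,\mu(\bphi)\big)^2\Big).
\]
So I need two inputs: the first and second $t$-derivatives of the Fubini–Study potential $\phi_{[i]}(t)$ at $t=0$, and a way to recognize the resulting expression as $-k^{-2}(\cP_{\bFS^{(k)}(\bH)}(h_{A_{[i]}}),(h_{A_{[i]}}))_{\bmu(\bFS^{(k)}(\bH))}$ plus the transversal-norm correction.

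For the first derivative, choosing the $H_{[i]}(0)$-orthonormal, $H_{[i]}(t)$-orthogonal basis $(s_{[i],\a})$ diagonalizing $A_{[i]}$, the computation already recorded in the proof of Proposition~\ref{qdf}(1) gives
\[
\dot\phi_{[i]}(0)=\frac1k\,\frac{\sum_\a A_{[i],\a\a}|s_{[i],\a}|^2}{\sum_\b|s_{[i],\b}|^2} = k^{-1}\,h_{H_{[i]}}(A_{[i]})|_X = k^{-1}h_{A_{[i]}},
\]
i.e.\ $\dot\phi_{[i]}(0)$ is $k^{-1}$ times the restriction to $X$ of the Fubini–Study Hamiltonian of $\xi_{A_{[i]}}$. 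For the second derivative I would differentiate once more: $\ddot\phi_{[i]}(0)$ picks up the $\sum_\a A_{[i],\a\a}^2|s_{[i],\a}|^2$ term and a $-(\sum A_{\a\a}|s_\a|^2)^2/(\sum|s_\b|^2)$ term, which together are precisely the pointwise identity expressing $\ddot\phi_{[i]}(0)$ via the Fubini–Study metric; the standard fact (as in Fine's and Donaldson's work, cf.\ \cite{Fin10, Don01}) is that this equals $k^{-1}$ times $|\xi_{A_{[i]}}^\bot|^2_{\aFS(H_{[i]})}$ plus a term involving $|\nabla^\top h_{A_{[i]}}|^2$ coming from the tangential part, more precisely $\ddot\phi_{[i]}(0)=k^{-1}|\xi_{A_{[i]}}^\bot|^2_{\aFS(H_{[i]})}+k^{-2}|\partial h_{A_{[i]}}|^2_{[i]}$, using the decomposition \eqref{pwdcomp} and that $\xi_{A_{[i]}}^\top$ is the gradient of the Hamiltonian restricted to $X$. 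Plugging $\dot\phi_{[i]}(0)=k^{-1}h_{A_{[i]}}$ and this expression for $\ddot\phi_{[i]}(0)$ into the formula for $\tfrac{d^2}{dt^2}\cL$, the $|\xi^\bot|^2$ term survives with coefficient $-\lambda k^{-1}$... but here $\lambda=1$ in the relevant (Fano) case, so it is $k^{-1}\int|\xi^\bot|^2\mu$ up to sign bookkeeping, while the remaining terms assemble into
\[
-k^{-2}\Big(\sum_i\int_X |\partial h_{A_{[i]}}|^2_{[i]}\mu + \lambda\int_X\big(\textstyle\sum_i h_{A_{[i]}}\big)^2\mu - \lambda\big(\textstyle\sum_i\int_X h_{A_{[i]}}\mu\big)^2\Big),
\]
which, after integrating the first summand by parts against $\mu=e^{\rho_{[i]}}\MA(\phi_{[i]})$ and recalling the definition of $\cP_{\bphi}$ and of the product \eqref{lproduct}, is exactly $-k^{-2}(\cP_{\bFS^{(k)}(\bH)}(h_{A_{[i]}}),(h_{A_{[i]}}))_{\bmu(\bFS^{(k)}(\bH))}$. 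Here I am using that $h_{A_{[i]}}$ is real-valued, so $\overline{v_{[i]}}=v_{[i]}$ and the $(\partial\rho_{[i]},\partial\overline{v_{[i]}})$ term in $\cP_{\bphi}$ is produced precisely by the weight $e^{\rho_{[i]}}$ in the integration by parts.

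The step I expect to be the main obstacle is the careful pointwise identification of $\ddot\phi_{[i]}(0)$ — namely justifying the orthogonal splitting $\sum_\a A_{\a\a}^2|s_\a|^2/\sum|s_\b|^2 - (\sum A_{\a\a}|s_\a|^2/\sum|s_\b|^2)^2 = |\xi_{A_{[i]}}^\bot|^2_{\aFS(H_{[i]})} + |\xi_{A_{[i]}}^\top|^2_{\aFS(H_{[i]})}$ and then matching $|\xi^\top|^2$ against $k^{-1}|\partial h_{A_{[i]}}|^2_{[i]}$ via the relation $\omega_{\FS^{(k)}(H_{[i]})}=k^{-1}\omega_{\aFS(H_{[i]})}$ restricted to $X$; this is a purely local moment-map computation on $\P^\ast(H^0(X,L_{[i]}^{\otimes k}))$ and is standard (it is the same computation underlying Donaldson's and Fine's Hessian formulas), but one must track the factors of $k$ scrupulously since the final answer mixes $k^{-1}$ and $k^{-2}$ terms. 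Everything else — affineness of $\AM^{(k)}$, the first variation of $\cL$, Proposition~\ref{vfmu}, the integration by parts producing $\cP_{\bphi}$ — is already available in the excerpt, so once the pointwise $\ddot\phi_{[i]}(0)$ identity is in hand the proof is a direct substitution.
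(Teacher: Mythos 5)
Your route is conceptually the same as the paper's: kill the $\AM^{(k)}$ terms by affineness, differentiate $\cL$ along $\bphi(t)=\bFS^{(k)}(\bH(t))$ twice, feed in $\dot\phi_{[i]}=k^{-1}h_{A_{[i]}}$ and the pointwise identity $\ddot\phi_{[i]}=k^{-1}|\xi_{A_{[i]}}^\bot|^2_{\aFS}+k^{-2}|\partial h_{A_{[i]}}|^2_{[i]}$, then integrate by parts against $\mu=e^{\rho_{[i]}}\MA(\phi_{[i]})$ to produce $\cP_{\bphi}$. The paper organizes the same computation a bit differently — it writes $\frac{d}{dt}\cL=k\langle\dot{\bH},(D^{(k)}_{[i]}\overline M_{[i],\bmu})\rangle$ and differentiates $\overline M_{[i],\bmu}(\bH(t))=\int_X M(H_{[i]}(t))\,\mu(\bFS^{(k)}(\bH(t)))$, which produces $|\xi_{A_{[i]}}|^2_{\aFS}$ directly from the moment-map derivative $\Tr(A_{[i]}\,dM(\xi_{A_{[i]}}))$, and only then decomposes $\top/\bot$ — but the content is identical to absorbing that moment-map computation into the pointwise formula for $\ddot\phi_{[i]}$.

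There is, however, a concrete sign error that is not just "bookkeeping": the first variation of $\cL$ is wrong. From $\cL(\bphi)=-\lambda\log\int_Xe^{-\lambda\sum_i\phi_{[i]}}\theta_0^n$ one gets
\[
\d\cL|_{\bphi}(\d\bphi)=\lambda^2\sum_i\int_X\d\phi_{[i]}\,\mu(\bphi)=\sum_i\int_X\d\phi_{[i]}\,\mu(\bphi),
\]
which is exactly what the paper records in the proof of Proposition~\ref{qdf}(1) (the first derivative of $\cL$ is $\sum_i\int_X\dot\phi_{[i]}\mu$, with a plus sign and no $\lambda$). Your $\d\cL=-\lambda\sum_i\int_X\d\phi_{[i]}\mu$ has the wrong sign when $\lambda=1$. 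This propagates: your second-derivative formula has the $\ddot\phi$ term with coefficient $-\lambda$, hence the $|\xi^\bot|^2$ term comes out as $-\lambda k^{-1}\sum_i\int|\xi^\bot|^2\mu$, i.e.\ negative for $\lambda=1$, which would make $\nabla^2\cD^{(k)}$ fail to be $\geq0$ and contradict convexity of $\cD^{(k)}$ (Proposition~\ref{qdf}(2)). The sign leak also shows up in your "remaining terms" block: with the sign corrected, those terms read $k^{-2}\sum_i\int_X|\partial h_{A_{[i]}}|^2_{[i]}\mu-\lambda k^{-2}\int_X(\sum_ih_{A_{[i]}})^2\mu+\lambda k^{-2}(\sum_i\int_Xh_{A_{[i]}}\mu)^2$, and using $\int_X(\Delta_{[i]}h+(\partial\rho_{[i]},\partial h)_{[i]})h\,\mu=-\int_X|\partial h|^2_{[i]}\mu$ this is precisely $-k^{-2}(\cP_{\bphi}(h_{A_{[i]}}),(h_{A_{[i]}}))_{\bmu(\bphi)}$, whereas your version has the $|\partial h|^2$ summand with the opposite sign and does not equal $-k^{-2}(\cP h,h)_\mu$. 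So correct $\d\cL$ (plus sign, no $\lambda$), redo the one-line differentiation using Proposition~\ref{vfmu}, and the rest of your argument — including the pointwise $\ddot\phi$ identity, the $\top/\bot$ split with $k^{-1}|\xi^\top|^2_{\aFS}=k^{-2}|\partial h|^2_{\FS^{(k)}}$ on $X$, and the integration by parts — goes through and reproduces the paper's proof.
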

\begin{proof}
The following argument is a generalization of \cite[Corollary 1.1, Lemma 3.2]{Tak18}. Let $\bH(t) \in \prod_i \cH_{[i]}^{(k)}$ be a Bergman geodesic emanating from $\bH$ and generated by $\bA$. Then
\[
\nabla^2 \cD|_{\bH}(\bA,\bA)=\frac{d^2}{dt^2} \cD(\bH(t))|_{t=0}=\frac{d^2}{dt^2} \cL(\bFS^{(k)}(\bH(t)))|_{t=0},
\]
where we used the fact that $\bH(t)$ is a Bergman geodesic in the first equality and $\AM_{[i]}^{(k)}(H_{[i]}(t))$ is affine in $t$ in the second equality. From the proof of Proposition \ref{qdf}, we find that
\begin{equation} \label{firstd}
\frac{d}{dt} \cL(\bFS^{(k)}(\bH(t)))=k\langle \dot{\bH}(t), (D_{[i]}^{(k)} \overline{M}_{[i],\mu}(\bH(t))) \rangle_{\bH(t)}.
\end{equation}
Now we take an $H_{[i]}(0)$-orthonormal, $H_{[i]}(t)$-orthogonal basis $(s_{[i],\a})_{\a=1\ldots,D_{[i]}^{(k)}}$ as in the proof of Proposition \ref{qdf} (1), and consider the matrix representation for the RHS of \eqref{firstd} with respect to this basis. So $\dot{\bH}(t)$ and $\overline{M}_{[i],\bmu}(\bH(t))$ mean $N$-tuple of matrices and the bracket is just given by taking the trace of them. Also we note that $\dot{\bH}(t)=\bA$ is a constant diagonal matrix since $\bH(t)$ is a Bergman geodesic. Since $\overline{M}_{[i],\bmu}(\bH(t))=\int_X M(H_{[i]}(t)) \mu(\bFS^{(k)}(\bH(t)))$, differentiating in $t$, we get
\begin{eqnarray*}
\frac{d^2}{dt^2} \cL(\bFS^{(k)}(\bH(t)))|_{t=0} &=& k^{-1} \sum_i \bigg[ \int_X \Tr \big(A_{[i]} \ddt M(H_{[i]}(t))|_{t=0} \big) \mu(\bFS^{(k)}(\bH))\\
&+& \int_X h_{A_{[i]}} \ddt \mu(\bFS^{(k)}(\bH(t)))|_{t=0} \bigg],
\end{eqnarray*}
where we used the formula $h_{A_{[i]}}=\Tr(A_{[i]} M(H_{[i]}))$. Then the first integrand is
\[
\Tr (A_{[i]} \ddt M(H_{[i]}(t))|_{t=0} \big)=\Tr (A_{[i]} d(M(H_{[i]})(\xi_{A_{[i]}})))=|\xi_{A_{[i]}}|_{\aFS(H_{[i]})}^2,
\]
where we used the fact that $M(H_{[i]})$ is the moment map for $\omega_{\aFS(H_{[i]})}$ in the last equality (we also note that the exterior derivative $d$ in the above equation is defined on the ambient projective space). Since $\ddt \FS_{[i]}^{(k)}(H_{[i]}(t))|_{t=0}=\frac{1}{k} h_{A_{[i]}}$, by using Proposition \ref{vfmu}, we compute the second integrand as
\[
\ddt \mu(\bFS^{(k)}(\bH))|_{t=0}=\frac{\lambda}{k} \sum_i \bigg[ -h_{A_{[i]}}+\int_X h_{A_{[i]}} \mu(\bFS^{(k)}(\bH)) \bigg] \mu(\bFS^{(k)}(\bH)).
\]
Thus we have
\begin{eqnarray} \label{estqd}
\frac{d^2}{dt^2} \cL(\bFS^{(k)}(\bH(t)))|_{t=0}&=&k^{-1} \sum_i \int_X |\xi_{A_{[i]}}|_{\aFS(H_{[i]})}^2 \mu(\bFS^{(k)}(\bH)) \nonumber \\
&-& \lambda k^{-2} \int_X \bigg( \sum_i h_{A_{[i]}} \bigg)^2 \mu(\bFS^{(k)}(\bH)) \nonumber \\
&+& \lambda k^{-2} \bigg( \int_X \sum_i h_{A_{[i]}} \mu(\bFS^{(k)}(\bH)) \bigg)^2.
\end{eqnarray}
To deal with the first term of \eqref{estqd}, we use the trivial decomposition
\[
|\xi_{A_{[i]}}|_{\aFS(H_{[i]})}^2=|\xi_{A_{[i]}}^\top|_{\aFS(H_{[i]})}^2+|\xi_{A_{[i]}}^\bot|_{\aFS(H_{[i]})}^2,
\]
where $| \cdot |_{\aFS(H_{[i]})}^2$ denotes the norm measured by the Fubini-Study metric $\omega_{\aFS(H_{[i]})} \in 2\pi c_1(\P^\ast(H^0(X,L_{[i]}^{\otimes k})), \cO(1))$. Also, over $X$ we have
\[
k^{-1} |\xi_{A_{[i]}}^\top|_{\aFS(H_{[i]})}^2=|\xi_{A_{[i]}}^\top|_{\FS_{[i]}^{(k)}(H_{[i]})}^2=k^{-2} |\p h_{A_{[i]}}|_{\FS_{[i]}^{(k)}(H_{[i]})}^2,
\]
where we note that $\omega_{\FS_{[i]}^{(k)}(H_{[i]})}=k^{-1} \omega_{\aFS(H_{[i]})}$ on $X$. Thus, integrating by parts, the first term of \eqref{estqd} becomes
\begin{eqnarray*}
&& k^{-1}\sum_i \int_X |\xi_{A_{[i]}}^\bot|_{\aFS(H_{[i]})}^2 \mu(\bFS^{(k)}(\bH))\\
&& -k^{-2} \sum_i \int_X (\Delta_{\FS_{[i]}^{(k)}(H_{[i]})} h_{A_{[i]}}+(\p \r_{[i]}(\FS_{[i]}^{(k)}(H_{[i]})), \p h_{A_{[i]}})_{\FS_{[i]}^{(k)}(H_{[i]})}) h_{A_{[i]}} \mu(\bFS^{(k)}(\bH)).
\end{eqnarray*}
Putting this into \eqref{estqd}, the sum of all terms except $k^{-1} \sum_i \int_X |\xi_{A_{[i]}}^\bot|_{\aFS(H_{[i]})}^2 \mu(\bFS^{(k)}(\bH))$ is just equal to $-k^{-2}(\cP_{\bFS^{(k)}(\bH)} (h_{A_{[i]}}), (h_{A_{[i]}}))_{\bmu(\bFS^{(k)}(\bH))}$. This completes the proof.
\end{proof}
\begin{rem}
The following argument is pointed out by the referees and argued in \cite[Proposition 2.7]{ST16} when $N=1$: by \eqref{dpqdf}, we observe that
\[
\nabla^2 \cD^{(k)}|_{\bH}(\bA,\bA)=\frac{d^2}{dt^2} \cD^{(k)}(\bH(t))|_{t=0}=\sum_i \frac{d^2}{dt^2} Z_{[i]}^{(k)}(H_{[i]}(t))_{t=0}+\frac{d^2}{dt^2} \cD(\bFS^{(k)}(\bH(t))|_{t=0}.
\]
Since the Bergman geodesic ray $\bH(t)$ defines a subgeodesic ray $\bFS^{(k)}(\bH(t))$ and $\cD$ is convex along subgeodesics (\cf \cite[Section 5]{HN17}), the derivative $\frac{d^2}{dt^2} \cD(\bFS^{(k)}(\bH(t))|_{t=0}$ is non-negative. Moreover, it was proved in \cite[Lemma 17]{Fin10} that
\[
\frac{d^2}{dt^2} Z_{[i]}^{(k)}(H_{[i]}(t))|_{t=0}=k^{-1} \int_X |\xi_{A_{[i]}}^\bot|_{\aFS(H_{[i]})}^2 \MA(\FS_{[i]}^{(k)}(H_{[i]})).
\]
Hence we get
\begin{equation} \label{ehdima}
\nabla^2 \cD^{(k)}|_{\bH}(\bA,\bA) \geq k^{-1} \sum_i \int_X |\xi_{A_{[i]}}^\bot|_{\aFS(H_{[i]})}^2 \MA(\FS_{[i]}^{(k)}(H_{[i]})),
\end{equation}
that is enough to show Theorem \ref{crconv}. Indeed, we can use \eqref{ehdima} instead of Proposition \ref{hessfqd} to estimate $\frac{d^2}{dt^2} \cR^{(k)}(\bH(t))$ along the balancing flow (\cf Lemma \ref{cobf}).
\end{rem}
%==============Subsection 3.2==========================
\subsection{Algebraic obstruction to the existence of balanced metrics}
In this subsection, we always assume that $\lambda=1$. We first consider a single polarization $L \to X$.
\begin{dfn}
A test configuration (of exponent $k$) for $(X, L)$ consists of a polarized scheme $\cL \to \cX$ (where $\cL$ is allowed to be relatively ample) with $\cX$ normal, and satisfying the following data:
\begin{enumerate}
\item A $\C^\ast$-action on $\cX$ lifting to actions on $\cL$.
\item A flat $\C^\ast$-equivariant map $\pi \colon \cX \to \C$.
\item An isomorphism $(\cX, \cL)|_1 \simeq (X, L^{\otimes k})$ on the $1$-fiber.
\end{enumerate}
\end{dfn}
Denote $D^{(km)}$ the dimension of $H^0(\cX_0, m\cL_0)$ and $w^{(km)}$ the total weight of the $\C^\ast$-action on $H^0(\cX_0, m\cL_0)$. Then for a large integer $m$, we have expansions:
\begin{align*}
D^{(km)}=a_0(km)^n+a_1(km)^{n-1}+\cdots+a_n,\\
w^{(km)}=e_0(km)^{n+1}+e_1(km)^n+\cdots+e_{n+1}.
\end{align*}
Then the {\it Chow weight} of $(\cX,\cL)$ is defined by
\[
\Chow^{(k)}(\cX,\cL):=\frac{e_0}{a_0}-\frac{w^{(k)}}{k D^{(k)}}.
\]
If $(\cX,\cL)$ is a product configuration generated by a holomorphic vector field $V$, using the equivariant Riemann-Roch theorem, we can write down these coefficients explicitly as integral invariants. In particular, we observe that
\begin{equation} \label{hof}
\Chow^{(km)}(\cX,m\cL)=-\frac{1}{km D^{(km)}} \sum_{j=1}^n \frac{(km)^{n+1-j}}{(n+1-j)!} \cF_j(V),
\end{equation}
where $\cF_1,\ldots, \cF_n$ denotes the {\it higher order Futaki invariants} (\cf \cite{Fut04}), and the above formula was studied in \cite[Proposition 2.2]{VD12}.

Next we consider the coupled settings.
\begin{dfn}
A test configuration (of exponent $k$) for $(X, (L_{[i]}))$ consists of a normal scheme $\cX$ polarized by an $N$-tuple $(\cL_{[1]}, \ldots, \cL_{[N]})$ with the following data:
\begin{enumerate}
\item A $\C^\ast$-action on $\cX$ lifting to actions on $(\cL_{[1]}, \ldots, \cL_{[N]})$.
\item A flat $\C^\ast$-equivariant map $\pi \colon \cX \to \C$.
\item Isomorphisms $(\cX, \cL_{[1]}, \cdots, \cL_{[N]}, \otimes_i \cL_{[i]})|_1 \simeq (X, L_{[1]}^{\otimes k}, \ldots, L_{[N]}^{\otimes k}, K_X^{\otimes -k})$ on the $1$-fiber.
\end{enumerate}
\end{dfn}
In particular, each $(\cX,\cL_{[i]})$ defines a test configuration of exponent $k$ for $(X, L_{[i]})$ and associated Bergman geodesic ray $\{H_{[i]}(t)\} \in \cH_{[i]}^{(k)}$ for each $i=1,\ldots, N$. Then we want to compute the radial quantized Ding functional along the ray $\bH(t)=(H_{[i]}(t))$. Set
\[
Z_{[i]}^{(k)}:=\AM_{[i]} \circ \FS_{[i]}^{(k)}-\AM_{[i]}^{(k)}.
\]
Then we can decompose $\cD^{(k)}$ as
\begin{equation} \label{dpqdf}
\cD^{(k)}=\sum_i Z_{[i]}^{(k)}+\cD \circ \bFS^{(k)}.
\end{equation}
It is well known that the asymptotic slope of $ Z_{[i]}^{(k)}(H_{[i]}(t))$ is the Chow weight (\cf \cite[Proposition 3]{Don05}, or \cite[Theorem 2.9]{Mum77} and \cite[Theorem 1]{Pau04}):
\[
\lim_{t \to \infty} \frac{Z_{[i]}^{(k)}(H_{[i]}(t))}{t}=\Chow^{(k)}(\cX,\cL_{[i]}).
\]
On the other hand, the asymptotic slope of $\cD(\bFS^{(k)}(H_{[i]}(t)))$ is expressed by means of the Donaldson-Futaki invariant as follows: using the $\C^\ast$-equivariant compactification $(\bar{\cX}, (\bar{\cL}_{[i]}))$ over $\P^1$ (\ie we glue $(\bar{\cX}, (\bar{\cL}_{[i]}))$ with a trivial family around $\infty \in \P^1$), the {\it Donaldson-Futaki invariant} $\DF(\cX, (\cL_{[i]}))$ for the test configuration $(\cX, (\cL_{[i]}))$ (of exponent $k$) \cite[Section 5]{HN17} is defined by
\[
\DF(\cX, (\cL_{[i]})):=-\sum_i \frac{(\bar{\cL}_{[i]})^{n+1}}{(n+1)k^{n+1} L_{[i]}^n}+\frac{(K_{\bar{\cX}/\P^1}+\frac{1}{k} \sum_i \bar{\cL}_{[i]})\cdot (\sum_i \bar{\cL}_{[i]})^n}{k^n (-K_X)^n}.
\]
Let $\Delta \subset \C$ be a closed unit disk centered at the origin. Then each $\FS_{[i]}^{(k)}(H_{[i]}(t))$ defines an $\S^1$-invariant locally bounded metric on $\cL_{[i]}|_\Delta \to \Delta$ with positive curvature current (for instance, see \cite[Lemma 2.4]{ST16}). The following argument is essentially due to \cite[Section 5]{HN17}, but we will explain here for the completeness: we further decompose $\cD(\bFS^{(k)}(\bH(t)))$ as
\begin{equation} \label{dcomfD}
\cD(\bFS^{(k)}(\bH(t)))=-\sum_i \AM_{[i]}(\FS_{[i]}^{(k)}(H_{[i]}(t)))+\AM \big( \sum_i \FS_{[i]}^{(k)}(H_{[i]}(t)) \big)+D \big(\sum_i \FS_{[i]}^{(k)}(H_{[i]}(t)) \big),
\end{equation}
where $\AM$ (resp. $D$) denotes the Aubin-Mabuchi functional (resp. Ding functional) defined for the anti-canonical polarization. Therefore we can apply the slope formula of $D$ (\cf \cite[Theorem 3.11]{Berm16}) to the test configuration $(\cX, \otimes_i \cL_{[i]})$ for $(X,K_X^{\otimes -1})$ with the induced fiber metric on $\otimes_i \cL_{[i]}$, and compute the asymptotic slope of the third term in \eqref{dcomfD} as
\[
\lim_{t \to \infty} \frac{D \big( \sum_i \FS_{[i]}^{(k)}(H_{[i]}(t)) \big)}{t}=-\frac{(\sum_i \bar{\cL}_{[i]})^{n+1}}{(n+1)k^{n+1} (-K_X)^n}+\frac{(K_{\bar{\cX}/\P^1}+\frac{1}{k} \sum_i \bar{\cL}_{[i]})\cdot (\sum_i \bar{\cL}_{[i]})^n}{k^n (-K_X)^n}-q,
\]
where the number $q$ is a non-negative and rational determined by the central fiber $\otimes_i \cL_{[i]}|_0$. The quantity $q$ vanishes if and only if $\cX$ is $\Q$-Gorenstein with $\otimes_i \cL_{[i]}$ isomorphic to the relative canonical bundle $K_{\cX/\C}^{\otimes -k}$ and $\cX_0$ reduced, and its normalization has at worst klt singularities\footnote{We need normality of the total space $\cX$ to apply \cite[Theorem 3.11]{Berm16}. Also we note that the normality of $\cX$ may not imply the normality of $\cX_0$.}. On the other hand, for the first and second term of \eqref{dcomfD}, one can apply the slope formula for Aubin-Mabuchi energy (\cf \cite[Theorem 4.2]{BHJ19}) to get
\[
\lim_{t \to \infty} \frac{\AM_{[i]}(\FS_{[i]}^{(k)}(H_{[i]}(t)))}{t}=\frac{(\bar{\cL}_{[i]})^{n+1}}{(n+1)k^{n+1} L_{[i]}^n},
\]
\[
\lim_{t \to \infty} \frac{\AM \big( \sum_i \FS_{[i]}^{(k)}(H_{[i]}(t)) \big)}{t}=\frac{(\sum_i \bar{\cL}_{[i]})^{n+1}}{(n+1)k^{n+1} (-K_X)^n}.
\]
\begin{dfn}
For a test configuration $(\cX,(\cL_{[i]}))$ (of exponent $k$) for $(X, (L_{[i]}))$, we define the quantized Donaldson-Futaki invariant
\[
\DF^{(k)}(\cX,(\cL_{[i]})):=\sum_i \Chow^{(k)}(\cX, \cL_{[i]})+\DF(\cX,(\cL_{[i]})).
\]
\end{dfn}
Then the above argument already shows that
\begin{equation} \label{sfmlgD}
\lim_{t \to \infty} \frac{\cD(\bFS^{(k)}(\bH(t)))}{t}=\DF^{(k)}(\cX,(\cL_{[i]}))-q.
\end{equation}
We note that the invariant $\DF(\cX, (\cL_{[i]}))$ is unchanged by replacing $(\cL_{[i]})$ with its power $(m\cL_{[i]})$, while we have
\[
\Chow^{(km)}(\cX, m \cL_{[i]})=\frac{e_{[i], 0}}{a_{[i], 0}}-\frac{w_{[i]}^{(km)}}{km D_{[i]}^{(km)}}=O(m^{-1})
\]
as $m \to \infty$. Hence we have
\begin{equation} \label{asinv}
\lim_{m \to \infty} \DF^{(km)}(\cX,(m \cL_{[i]}))=\DF(\cX,(\cL_{[i]})).
\end{equation}
\begin{dfn}
We say that:
\begin{enumerate}
\item $(X, (L_{[i]}))$ is semistable at level $k$ if $\DF^{(k)}(\cX,(\cL_{[i]})) \geq 0$ for all test configurations $(\cX,(\cL_{[i]}))$ of exponent $k$.
\item $(X, (L_{[i]}))$ is polystable at level $k$ if it is semistable at level $k$ and the equality $\DF^{(k)}(\cX,(\cL_{[i]}))=0$ holds for a test configuration $(\cX,(\cL_{[i]}))$ of exponent $k$ if and only if $(\cX,(\cL_{[i]}))$ is product.
\item $(X, (L_{[i]}))$ is asymptotically (semi/poly)stable if there exists $k_0 \in \N$ such that $(X, (L_{[i]}))$ is (semi/poly)stable at level $k$ for all $k \geq k_0$.
\end{enumerate}
\end{dfn}
Then from Proposition \ref{cqd}, \eqref{sfmlgD} and \eqref{asinv}, we have the following:
\begin{thm} \label{sims}
If $(X, (L_{[i]}))$ admits a balanced metric at level $k$, then it is polystable at level $k$. Moreover, the asymptotic semistability implies K-semistability, \ie $DF(\cX, (\cL_{[i]})) \geq 0$ holds for all test configurations $(\cX, (\cL_{[i]}))$.
\end{thm}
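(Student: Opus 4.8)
The plan is to feed the positivity of the radial quantized Ding functional (Proposition \ref{cqd}) into the slope identities assembled above. Fix a test configuration $(\cX,(\cL_{[i]}))$ of exponent $k$ for $(X,(L_{[i]}))$. Trivializing $\cX$ over $\C^\ast$ by its $\C^\ast$-action and pulling back the Fubini--Study metrics --- equivalently, exponentiating the Hermitian generator of the induced $\C^\ast$-action on each $H^0(X,L_{[i]}^{\otimes k})$ with an $S^1$-invariant base point --- produces a Bergman geodesic ray $\bH(t)=(H_{[i]}(t))\subset\prod_i\cH_{[i]}^{(k)}$. Combining the decomposition \eqref{dpqdf}, the identity $\lim_{t\to\infty}Z_{[i]}^{(k)}(H_{[i]}(t))/t=\Chow^{(k)}(\cX,\cL_{[i]})$, the slope formula \eqref{sfmlgD}, and the definition of $\DF^{(k)}$, one records the key relation $\cD_{\rm rad}^{(k)}(\{\bH(t)\})=\DF^{(k)}(\cX,(\cL_{[i]}))-q$, where $q\geq 0$ is the rational number attached to the central fibre of $\otimes_i\cL_{[i]}$.

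Now assume a balanced metric exists at level $k$. Then Proposition \ref{cqd} gives $\cD_{\rm rad}^{(k)}(\{\bH(t)\})\geq 0$, hence $\DF^{(k)}(\cX,(\cL_{[i]}))\geq q\geq 0$; as the test configuration was arbitrary, this is semistability at level $k$. For the equality clause, suppose first that $\DF^{(k)}(\cX,(\cL_{[i]}))=0$; then $\cD_{\rm rad}^{(k)}(\{\bH(t)\})=-q\leq 0$, so by Proposition \ref{cqd} both $q=0$ and $\cD_{\rm rad}^{(k)}(\{\bH(t)\})=0$, and the equality part of Proposition \ref{cqd} forces $\bH(t)=f_t^\ast\bH(0)$ for the flow $f_t$ of a holomorphic vector field. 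Since for $k$ large and divisible the normal total space $\cX$ (with each $\cL_{[i]}$ relatively ample) is recovered as the normalization of the closure of the $\C^\ast$-orbit of $X$ in the relevant product of projective bundles over $\C$, and here that orbit is already closed ($f_tX=X$), the total space is the trivial family $X\times\C$ equipped with a twisted $\C^\ast$-action, i.e. $(\cX,(\cL_{[i]}))$ is product. Conversely, if $(\cX,(\cL_{[i]}))$ is product then its Bergman geodesic ray is generated by a holomorphic vector field, so by the equality clause of Proposition \ref{cqd} we have $\cD_{\rm rad}^{(k)}(\{\bH(t)\})=0$; since a product total space is $\Q$-Gorenstein with smooth reduced central fibre and relatively anticanonical polarization, $q=0$, whence $\DF^{(k)}(\cX,(\cL_{[i]}))=0$. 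This proves polystability at level $k$.

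For the second assertion, let $(\cX,(\cL_{[i]}))$ be an arbitrary test configuration for $(X,(L_{[i]}))$, presented with some exponent $k$. For every $m\geq 1$ the scaled family $(\cX,(m\cL_{[i]}))$ is a test configuration of exponent $km$, so asymptotic semistability yields $\DF^{(km)}(\cX,(m\cL_{[i]}))\geq 0$ as soon as $km\geq k_0$. Letting $m\to\infty$ and invoking \eqref{asinv} gives $\DF(\cX,(\cL_{[i]}))=\lim_{m\to\infty}\DF^{(km)}(\cX,(m\cL_{[i]}))\geq 0$, which is K-semistability.

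The only genuinely non-formal point is the identification, in the equality case of the first assertion, of a test configuration whose induced Bergman geodesic ray is an $\Aut$-orbit with a product configuration; this rests on the standard fact that for sufficiently large and divisible $k$ a test configuration is determined by the induced $\C^\ast$-action on the spaces $H^0(X,L_{[i]}^{\otimes k})$ through the normalization of the orbit closure, so that a closed orbit forces a trivial total space. Everything else is a bookkeeping assembly of Proposition \ref{cqd}, \eqref{dpqdf}, the Chow weight slope formula, \eqref{sfmlgD} and \eqref{asinv}.
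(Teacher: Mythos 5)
Your argument is correct and follows exactly the route the paper indicates (the paper itself merely cites Proposition \ref{cqd}, \eqref{sfmlgD} and \eqref{asinv} without writing out the assembly): from $\cD_{\rm rad}^{(k)}=\DF^{(k)}-q$ with $q\geq 0$ you read off semistability, the equality analysis via the rigidity clause of Proposition \ref{cqd} gives polystability, and \eqref{asinv} gives K-semistability in the limit. One small bookkeeping remark: as printed, \eqref{sfmlgD} records the slope of $\cD\circ\bFS^{(k)}$ with $\DF^{(k)}$ on the right, so combining it literally with \eqref{dpqdf} and the Chow slope would double-count $\sum_i\Chow^{(k)}$; your final relation $\cD_{\rm rad}^{(k)}=\DF^{(k)}-q$ is the correct one and amounts to reading that display as the slope of $\cD^{(k)}$ itself (equivalently, replacing $\DF^{(k)}$ by $\DF$ on the right-hand side of \eqref{sfmlgD}), which is what the surrounding computation actually establishes.
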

\begin{rem}
Theorem \ref{sims} combined with Theorem \ref{balp} implies that if $(X,(L_{[i]}))$ admits a CKE metric, then it is K-semistable. In fact, a stronger result proving K-polystability already appeared in \cite[Theorem D]{HN17}.
\end{rem}
Now we focus on the case when $(\cX, (\cL_{[i]}))$ is a product configuration generated by a holomorphic vector field $V$. In this case, the Donaldson-Futaki invariant coincides with the coupled Futaki invariant \cite[Section 3.2]{DP19}:
\begin{equation} \label{coupledFut}
\DF(\cX, (\cL_{[i]}))=\Fut_c(V):=-\sum_i \frac{1}{V_{[i]}} \int_X \theta_V(\omega_{[i]}) \omega_{[i]}^n+\sum_i \int_X \theta_V(\omega_{[i]}) \mu(\bomega),
\end{equation}
where $V_{[i]}^n:=(2\pi L_{[i]})^n$, $\bomega=(\omega_{[i]}) \in \prod_i (\cH_{[i]}/\R)$ denotes any ${\rm Im}(V)$-invariant metric and $\theta_V(\omega_{[i]})$ is a Hamiltonian, \ie $i_V \omega_{[i]}=\sqrt{-1} \bp \theta_V(\omega_{[i]})$. The variational formula \eqref{vfding} combined with \eqref{coupledFut} yields that
\begin{equation} \label{vfhdf}
\ddt \cD(\exp(t{\rm Re}V)^\ast \bomega)=\DF(\cX, (\cL_{[i]})).
\end{equation}
Also the equation \eqref{dpqdf} combined with \eqref{sfmlgD} yields that
\begin{equation} \label{vfhqdf}
\ddt \cD^{(k)}(\exp(t{\rm Re}V)^\ast \bomega)=\DF^{(k)}(\cX, (\cL_{[i]}))
\end{equation}
since the LHS does not depend on $t$ by Proposition \ref{qdf} (2), and $q=0$ (in the equation \eqref{sfmlgD}) for product configurations $(\cX, (\cL_{[i]}))$. For any sufficiently large integer $m$, we consider the asymptotic expansion of $\DF^{(k)}(\cX,(m\cL_{[i]}))$:
\begin{eqnarray*}
f(km)&:=& km D_{[1]}^{(km)} \cdots D_{[N]}^{(km)} \cdot  \DF^{(k)}(\cX,(m\cL_{[i]})) \\
&=& km D_{[1]}^{(km)} \cdots D_{[N]}^{(km)} \Fut_c(V)\\
&-& \sum_i D_{[1]}^{(km)} \cdots \elm{D_{[i]}^{(km)}} \cdots D_{[N]}^{(km)} \sum_{j=1}^n \frac{(km)^{n+1-j}}{(n+1-j)!} \cF_{[i], j} (V),
\end{eqnarray*}
where $\cF_{[i], j}(V)$ denotes the $j$-th order Futaki invariant with respect to a product configuration $(\cX, \cL_{[i]})$, and the notion $\elm{T}$ means eliminating the term $T$. So $f(km)$ is a polynomial of $km$ of degree $nN+1$.
\begin{dfn}
We define the higher order coupled Futaki invariants $\cF_{c, j}$ as the coefficients of $f$:
\[
f(km)=\cF_{c,1}(V)(km)^{nN+1}+\cdots+\cF_{c, nN+1}(V) km.
\]
\end{dfn}
If $(X, (L_{[i]}))$ is asymptotically semistable, then we have $\DF^{(k)}(\cX,(m\cL_{[i]}))=0$ for all positive integers $m$ and product configurations $(\cX,(\cL_{[i]}))$. Thus the invariants $\cF_{c, j}$ defines an obstruction to the asymptotic semistability:
\begin{thm}
If $(X, (L_{[i]}))$ is asymptotically semistable, then we have $\cF_{c, j} \equiv 0$ for all $j=1, \ldots, nN+1$.
\end{thm}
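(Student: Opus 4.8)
The plan is to derive, from asymptotic semistability, that the quantized Donaldson-Futaki invariant $\DF^{(k)}(\cX,(m\cL_{[i]}))$ vanishes for every product configuration $(\cX,(\cL_{[i]}))$ and every $m\geq 1$, and then to feed this into the polynomial identity for $f(km)$: up to the nowhere-vanishing prefactor $km\,D_{[1]}^{(km)}\cdots D_{[N]}^{(km)}$, the quantity $f(km)$ is a polynomial in the single variable $km$ of degree $nN+1$ whose coefficients are exactly the $\cF_{c,j}(V)$, so forcing it to vanish along an infinite set of $m$ will kill every $\cF_{c,j}(V)$; letting $V$ range over all product configurations then finishes the proof.

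First I would fix a holomorphic vector field $V$ generating a $\C^\ast$-action on $X$ and the associated product test configuration $(\cX,(\cL_{[i]}))$ for $(X,(L_{[i]}))$. For every positive integer $m$ the power $(\cX,(m\cL_{[i]}))$ is again a product test configuration, now of exponent $km$, and so is the configuration generated by $-V$. Choose $k\geq k_0$, the threshold in the definition of asymptotic semistability; then every $km$ with $m\geq 1$ lies above $k_0$. Semistability at level $km$, applied to the two product configurations generated by $V$ and by $-V$, gives $\DF^{(k)}(\cX,(m\cL_{[i]}))\geq 0$ and $-\DF^{(k)}(\cX,(m\cL_{[i]}))\geq 0$, the second inequality because, by \eqref{vfhqdf}, the value of $\DF^{(k)}$ on a product configuration is the constant derivative of $\cD^{(k)}$ along the induced flow and hence is odd under $V\mapsto -V$. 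Therefore $\DF^{(k)}(\cX,(m\cL_{[i]}))=0$, and consequently $f(km)=km\,D_{[1]}^{(km)}\cdots D_{[N]}^{(km)}\cdot\DF^{(k)}(\cX,(m\cL_{[i]}))=0$ for all $m\geq 1$.

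On the other hand, for $m$ large the Hilbert functions $D_{[i]}^{(km)}$ coincide with the corresponding Hilbert polynomials and the weight data appearing in the Chow weights coincide with their polynomial expressions, so by \eqref{hof} and the expansion displayed just before the definition of $\cF_{c,j}$, the value $f(km)$ coincides with the fixed polynomial $\cF_{c,1}(V)(km)^{nN+1}+\cdots+\cF_{c,nN+1}(V)(km)$ evaluated at $km$. This polynomial has degree at most $nN+1$ yet vanishes for all sufficiently large $m$, that is, at infinitely many values of $km$; hence it is the zero polynomial and $\cF_{c,j}(V)=0$ for every $j=1,\ldots,nN+1$. Since this holds for every product configuration, i.e. for every admissible $V$, we conclude $\cF_{c,j}\equiv 0$ for all $j$.

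The argument is essentially bookkeeping once the slope formula \eqref{sfmlgD}, the limiting relation \eqref{asinv}, and the oddness \eqref{vfhqdf} are in hand, and I do not expect a genuinely new difficulty beyond this. The one point deserving care is matching the two regimes in which $f(km)$ is controlled: one must check that the set of $m$ for which simultaneously $f(km)=0$ and $f(km)$ equals the stated polynomial is infinite. This is automatic here, since the first property holds for all $m\geq 1$ and the second for all large $m$, so their intersection is cofinite.
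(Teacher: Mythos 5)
Your proof is correct and takes essentially the same approach as the paper: apply asymptotic semistability simultaneously to the product configurations generated by $V$ and $-V$ (using the oddness of $\DF^{(k)}$ in $V$, visible from \eqref{vfhqdf} or from the sign-flip of the weights) to force $\DF^{(k)}(\cX,(m\cL_{[i]}))=0$, then observe that $f(km)$ is a degree-$(nN+1)$ polynomial in $km$ vanishing at infinitely many integers, so all its coefficients $\cF_{c,j}(V)$ are zero. The paper states this more tersely, but your filling-in of the $\pm V$ step and the ``cofinitely many $m$'' bookkeeping is exactly the intended reasoning.
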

By definition, each $\cF_{c, j}$ can be written as a linear combination of $\Fut_c$ and $\{\cF_{[i], \ell}\}_{\ell=1,\ldots,n, \; i=1,\ldots, N}$ whose coefficients $\{a_{[i], s}\}_{s=0,\ldots,n, \; i=1,\ldots, N}$ comes from those of the Hilbert polynomials of $\{(X, L_{[i]})\}_{i=1,\ldots, N}$. For instance,
\[
\cF_{c,1}=\prod_i a_{[i], 0} \cdot \Fut_c,
\]
\[
\cF_{c,2}=\sum_i a_{[1],0} \cdots \elm{a_{[i], 0}} \cdots a_{[N], 0} \cdot a_{[i], 1} \cdot \Fut_c - \frac{1}{n!} \sum_i a_{[1], 0} \cdots \elm{a_{[i], 0}} \cdots a_{[N], 0} \cF_{[i], 1}.
\]
In particular, the invariant $\cF_{c,1}$ is a positive multiple of $\Fut_c$. Also we remark that $\DF^{(k)}(\cX, (m\cL_{[i]}))$ vanishes for all product configurations $(\cX, (\cL_{[i]}))$ and sufficiently large integer $m$ if and only if $\cF_{c, j} \equiv 0$ for all $j=1, \ldots, nN+1$ by the definition of $f$. Combining with \eqref{vfhdf} and \eqref{vfhqdf}, we can conclude the following:
\begin{prop} \label{Ginvariance}
The functional $\cD$ is invariant under the $G$-action if and only if $\Fut_c$ (or $\cF_{c,1}$) vanishes. Also the quantized functional $\cD^{(k)}$ are invariant under the $G$-action for sufficiently large $k$ if and only if $\cF_{c, j} \equiv 0$ for all $j=1, \ldots, nN+1$.
\end{prop}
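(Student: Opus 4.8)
The plan is to read off both equivalences from the slope identities \eqref{vfhdf} and \eqref{vfhqdf}, together with the observation that $G=\Aut_0(X)$ is connected. First I would record a reduction: since $G$ is connected, every $g\in G$ is a finite product of time-one flows $\exp(\Re V_j)$ with $V_j\in\fg:=\Lie(G)$, so a functional $F$ on $\prod_i\cH_{[i]}$ (resp.\ on $\prod_i\cH_{[i]}^{(k)}$) is $G$-invariant if and only if the curve $t\mapsto F(\exp(t\Re V)^{\ast}\bphi)$ is constant for every $V\in\fg$ and every $\bphi$ --- one passes from the latter to $G$-invariance by peeling off the factors $\exp(\Re V_j)$ one at a time. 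So it suffices to decide when $t\mapsto\cD(\exp(t\Re V)^{\ast}\bphi)$ and $t\mapsto\cD^{(k)}(\exp(t\Re V)^{\ast}\bphi)$ are constant.

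For $\cD$: by \eqref{vfhdf} combined with \eqref{coupledFut}, this curve has $t$-independent derivative $\DF(\cX,(\cL_{[i]}))=\Fut_c(V)$, where $(\cX,(\cL_{[i]}))$ is the product configuration generated by $V$. Hence $\cD$ is constant along $\exp(t\Re V)$ exactly when $\Fut_c(V)=0$; letting $V$ range over $\fg$ and using the reduction, $\cD$ is $G$-invariant iff $\Fut_c\equiv0$. Finally $\Fut_c\equiv0$ is the same condition as $\cF_{c,1}\equiv0$, since $\cF_{c,1}=\prod_i a_{[i],0}\cdot\Fut_c$ with every leading Hilbert coefficient $a_{[i],0}>0$.

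For $\cD^{(k)}$ I would run the same scheme with \eqref{vfhqdf}: for fixed $k$ the curve $t\mapsto\cD^{(k)}(\exp(t\Re V)^{\ast}\bphi)$ is affine (this is Proposition \ref{qdf}(2) applied to the Bergman geodesic $\exp(t\Re V)^{\ast}\bphi$) with slope $\DF^{(k)}(\cX,(\cL_{[i]}))$, so $\cD^{(k)}$ is $G$-invariant iff $\DF^{(k)}(\cX,(\cL_{[i]}))=0$ for the exponent-$k$ product configuration attached to every $V\in\fg$. Now, for the product configuration generated by $V$, replacing $(\cL_{[i]})$ by $(m\cL_{[i]})$ and multiplying by $km\prod_i D_{[i]}^{(km)}$ produces precisely the polynomial $f(km)=\sum_{j=1}^{nN+1}\cF_{c,j}(V)(km)^{nN+2-j}$ in the variable $km$; since each $D_{[i]}^{(\cdot)}$ agrees with a Hilbert polynomial with positive leading term for large argument, for all sufficiently large $k$ the equality $\DF^{(k)}(\cX,(\cL_{[i]}))=0$ is equivalent to $f(k)=0$ (the $m=1$ case). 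Therefore: if $\cF_{c,j}\equiv0$ for all $j$ then $f\equiv0$, so $\DF^{(k)}(\cX,(\cL_{[i]}))=0$ for all $V$ and all large $k$, whence $\cD^{(k)}$ is $G$-invariant for all such $k$; conversely, if $\cD^{(k)}$ is $G$-invariant for all $k\geq k_0$, then for each $V$ the polynomial $f$ vanishes at every integer $k\geq k_0$, hence vanishes identically, so all its coefficients $\cF_{c,j}(V)$ vanish, i.e.\ $\cF_{c,j}\equiv0$ for $j=1,\dots,nN+1$.

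Since every ingredient has already been established in this section, no step is genuinely hard; the two points to handle with a little care are (i) the reduction of $G$-invariance to invariance along the one-parameter subgroups $\exp(t\Re V)$, which uses connectedness of $G$ and the fact that $\fg$ is closed under multiplication by $\sqrt{-1}$ so that the ``imaginary'' directions are captured as well, and (ii) verifying that \eqref{vfhdf} and \eqref{vfhqdf} really give a $t$-independent derivative equal to the relevant Futaki-type invariant regardless of the chosen base point, which in the quantized case is exactly the affineness in Proposition \ref{qdf}(2). I expect (i) to be the main (if still mild) obstacle, as it is the only place where one must argue rather than simply quote a formula.
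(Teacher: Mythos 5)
Your proposal is correct and follows the same route the paper intends: read off both equivalences from the slope identities \eqref{vfhdf} and \eqref{vfhqdf}, plus the positivity of $k D_{[1]}^{(k)}\cdots D_{[N]}^{(k)}$ and the ``polynomial vanishing on infinitely many integers'' argument. The paper itself gives essentially no proof (it simply says ``combining with \eqref{vfhdf} and \eqref{vfhqdf}, we can conclude''), and what you write out -- the reduction to one-parameter subgroups via connectedness of $G$, the observation that $\cF_{c,1}$ is a positive multiple of $\Fut_c$, and the $m=1$ specialization of the polynomial $f$ -- is exactly the substance the paper leaves implicit.
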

%==============Subsection 4==========================
\section{Existence and weak convergence of balanced metrics} \label{ewcbm}
We prove Theorem \ref{balp}. The proof is essentially the same as the case $N=1$ (\cf \cite{BBGZ13, BN14, Tak15}) once the comparison formula between $\cJ$ and $\cJ^{(k)}$ is established (see \eqref{cfjf} in the proof).
\begin{proof}[Proof of Theorem \ref{balp}]
Let $\lambda=1$ (but the same proof works for the case $\lambda=-1$ although the $C^\infty$-convergence result is given in Theorem \ref{crconv}). We assume that the manifold $X$ admits a CKE metric with $\cF_{c, 1} \equiv \ldots \equiv \cF_{c, nN+1} \equiv 0$. In particular, by Proposition \ref{Ginvariance}, we know that the functionals $\cD$ and $\cD^{(k)}$ are invariant under the $G$-action. Then the coercivity of $\cD$ (\cf Theorem \ref{Coercivity}) shows that there exist some constants $\d, C>0$ such that for any $\bH \in \prod_i \cH_{[i]}^{(k)}$ and $f \in G$, we have
\[
\cD(\bFS^{(k)}(\bH)) \geq \d \cJ(\bFS^{(k)}(f^\ast \bH))-C.
\]
We can compute the LHS as
\begin{eqnarray*}
\cD(\bFS^{(k)}(\bH))
&=& \cD(f^\ast \bomega_{\bFS^{(k)}(\bH)})\\
&=&\cD(\bFS^{(k)}(f^\ast \bH))\\
&=& \cJ(\bFS^{(k)}(f^\ast \bH))+(\cL-\hat{\cL})(\bFS^{(k)}(f^\ast \bH)).
\end{eqnarray*}
Hence we have
\[
(1-\d) \cJ(\bFS^{(k)}(f^\ast \bH))+(\cL-\hat{\cL})(\bFS^{(k)}(f^\ast \bH)) \geq -C.
\]
We compare $\cJ$ with the exhaustion function $\cJ^{(k)}$ on a finite dimensional space by using the following lemma:
\begin{lem}[See \cite{BBGZ13}, Lemma 7.7]
There exists a sequence $\d_{[i]}^{(k)} \to 0$ such that
\[
J_{[i]} \circ \FS_{[i]}^{(k)} \leq (1+\d_{[i]}^{(k)}) J_{[i]}^{(k)}+\d_{[i]}^{(k)}.
\]
\end{lem}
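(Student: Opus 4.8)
Since the statement concerns only the single polarisation $(X,L_{[i]})$, it is the single-polarisation comparison recorded as \cite[Lemma~7.7]{BBGZ13} applied with $L=L_{[i]}$; so the ``proof'' is a citation, and I only sketch the mechanism. Fix $i$ and suppress it from the notation. The starting point is the algebraic identity, immediate from $J^{(k)}=-\AM^{(k)}+\hat L\circ\FS^{(k)}$ and $J\circ\FS^{(k)}=-\AM\circ\FS^{(k)}+\hat L\circ\FS^{(k)}$,
\[
J^{(k)}-J\circ\FS^{(k)}=\AM\circ\FS^{(k)}-\AM^{(k)}=:Z^{(k)},
\]
the ``quantisation defect'', whose asymptotic slope along a Bergman geodesic ray is a Chow-type weight (cf. Section~\ref{Gquant}). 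The inequality to be proved is equivalent to $Z^{(k)}\ge -\delta^{(k)}(J^{(k)}+1)$, and since $\inf_{\cH}J=0$ gives $J\circ\FS^{(k)}\ge0$, it is in fact enough to produce a sequence $c^{(k)}\to0$ with $Z^{(k)}\ge -c^{(k)}(J\circ\FS^{(k)}+1)$.

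I would run the estimate along Bergman geodesics $\{H(t)\}_{t\in[0,1]}$ issuing from the reference metric $\hat H^{(k)}$. There $\AM^{(k)}$ is affine (Section~\ref{sfspl}), while $\FS^{(k)}(H(t))$ is an $\hat\omega$-subgeodesic and $\AM$, $\hat L$ are convex along subgeodesics, so both $Z^{(k)}$ and $J^{(k)}$ are convex along $\{H(t)\}$. At the reference metric both are $O(k^{-1})$: by the Tian--Yau--Zelditch expansion of the Bergman kernel of $\hat h^{\otimes k}$ one has $\FS^{(k)}(\hat H^{(k)})\to0$ in $C^0$ at rate $O(k^{-1})$ and $T^{(k)}(\hat H^{(k)})(\hat H^{(k)})^{-1}=\Id+O(k^{-1})$ in operator norm. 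By convexity, the inequality then splits into a \emph{bounded regime}, where one combines the $O(k^{-1})$ control at $\hat H^{(k)}$ with compactness modulo scaling (both $J^{(k)}$ and $\dist^{(k)}$ are scale-invariant, and $J^{(k)}$ is a proper exhaustion of $\cH^{(k)}/\R$ since it is convex along Bergman geodesics and has strictly positive recession slope along every ray, as one checks directly along one-parameter subgroups), and an \emph{asymptotic regime}, where what matters is the ratio of the recession slope of $Z^{(k)}$ to that of $J^{(k)}$.

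The asymptotic regime is the core of the argument. Along a Bergman geodesic ray the recession slope of $Z^{(k)}$ is a Chow-type weight of the form $\tfrac{e_0}{a_0}-\tfrac{w^{(k)}}{kD^{(k)}}$, and that of $J^{(k)}$ is a related leading-order quantity; both are governed by the asymptotic Riemann--Roch data of $(X,L^{\otimes k})$ and of the degeneration. Since the Chow correction is of lower order --- compare \eqref{hof}, and \eqref{asinv} where the analogous terms satisfy $\Chow^{(km)}(\cX,m\cL_{[i]})=O(m^{-1})$ --- it is dominated by $\delta^{(k)}$ times the $J^{(k)}$-slope once $k$ is large; together with the bounded regime this gives the claim. \textbf{The main obstacle} is to make this domination uniform in $k$ \emph{and} over all Bergman geodesic rays (equivalently, over all test configurations) simultaneously: this is precisely the content of \cite[Lemma~7.7]{BBGZ13}, where it is obtained by comparing $J^{(k)}$ with the continuous functional $J\circ\FS^{(k)}$ via Berman--Boucksom's asymptotic equidistribution (``growth of balls of holomorphic sections'') estimates, which promote the pointwise convergence $J^{(k)}\to J$ to convergence uniform on bounded sets with uniform control in the non-compact directions. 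I expect this uniform asymptotic Riemann--Roch input to be the only genuinely non-routine step; the algebraic identity, the convexity statements, and the Bergman-kernel expansions at the reference metric are all standard.
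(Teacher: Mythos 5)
The paper does not prove this lemma; it is cited verbatim from \cite[Lemma~7.7]{BBGZ13}, which is also what you do, so your proposal agrees with the paper on the only issue the paper actually addresses and the citation is the correct one. Your heuristic gloss on the mechanism is candid about being a sketch, and the algebraic reformulations in it (the identity $J^{(k)}-J\circ\FS^{(k)}=Z^{(k)}$ and the equivalent inequality $Z^{(k)}\geq -\delta^{(k)}(J^{(k)}+1)$) are correct; but the sketch does not describe how \cite{BBGZ13} actually argue. They do not run along Bergman geodesic rays comparing recession slopes of the convex functionals $Z^{(k)}$ and $J^{(k)}$, and no Chow weights enter: the lemma there follows from direct pointwise two-sided estimates on $\FS^{(k)}\circ\Hilb^{(k)}-\Id$ (an Ohsawa--Takegoshi-type lower bound paired with a sub-mean-value upper bound) fed through the ``growth of balls'' machinery of \cite{BB10} to pass to the energy level; that route sidesteps precisely the uniformity-over-all-rays issue you flag as the ``main obstacle.'' Also, invoking \eqref{asinv} ($\Chow^{(km)}(\cX,m\cL_{[i]})=O(m^{-1})$) as evidence that the Chow correction is ``of lower order'' is not germane: that estimate is for a \emph{fixed} test configuration raised to powers $m\to\infty$, whereas what the lemma needs is a bound uniform over all configurations of exponent $k$ as $k\to\infty$. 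None of this affects the verdict---both treatments are citations---but your sketch should not be read as a summary of the cited argument.
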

Since $J_{[i]}$ is non-negative, the above lemma in particular implies that
\[
\inf_{\cH_{[i]}^{(k)}} J_{[i]}^{(k)} \geq -\frac{\d_{[i]}^{(k)}}{1+\d_{[i]}^{(k)}}=:-\e_{[i]}^{(k)}
\]
with $\e_{[i]}^{(k)} \to 0$. Hence we have
\begin{eqnarray*}
J_{[i]} \circ \FS_{[i]}^{(k)} &\leq& (1+\d_{[i]}^{(k)}) J_{[i]}^{(k)}+\d_{[i]}^{(k)}\\
&=& (1+\d_{[i]}^{(k)})(J_{[i]}^{(k)}+\e_{[i]}^{(k)})+\d_{[i]}^{(k)}-(1+\d_{[i]}^{(k)}) \e_{[i]}^{(k)}\\
&\leq& (1+\d^{(k)})(J_{[i]}^{(k)}+\e_{[i]}^{(k)})+\d_{[i]}^{(k)}-(1+\d_{[i]}^{(k)}) \e_{[i]}^{(k)}\\
&=&  (1+\d^{(k)}) J_{[i]}^{(k)}+\d_{[i]}^{(k)}+(\d^{(k)}-\d_{[i]}^{(k)}) \e_{[i]}^{(k)},
\end{eqnarray*}
where we set $\d^{(k)}:=\max_i \d_{[i]}^{(k)}$. Summing up in $i$, we obtain
\begin{equation} \label{cfjf}
\cJ \circ \bFS^{(k)} \leq (1+\d^{(k)}) \cJ^{(k)}+\tau^{(k)}
\end{equation}
with $\tau^{(k)} \to 0$. Thus
\begin{eqnarray*}
\cD^{(k)}(\bH) &=& \cD^{(k)}(f^\ast \bH) \\
&=& \cJ^{(k)}(f^\ast \bH)+(\cL-\hat{\cL})(\bFS^{(k)}(f^\ast \bH)) \\
&\geq& \bigg[ \frac{1}{1+\d^{(k)}}-(1-\d) \bigg] \cJ(\bFS^{(k)}(f^\ast H))-\frac{\tau^{(k)}}{1+\d^{(k)}}-C.
\end{eqnarray*}
Hence if we take $k$ sufficiently large so that $(1+\d^{(k)})^{-1}>1-\d$, we obtain
\begin{equation} \label{ebm}
\cD^{(k)}(\bH) \geq \d' \cJ_G^{(k)}(\bH)-C'
\end{equation}
for some ($k$-independent) uniform constants $\d', C'>0$. We take a sequence $\{\bH_\ell\} \subset \prod_i \cH_{[i]}^{(k)}$ so that $\lim_{\ell \to \infty} \cD^{(k)}(\bH_\ell)=\inf_{\prod_i \cH_{[i]}^{(k)}} \cD^{(k)}$. Since $\cD^{(k)}$ is $G$-invariant, by multiplying some elements in $G$, we may assume that $\{ \bH_\ell\}$ is contained in a sublevel set of $\cJ^{(k)}$ which is compact from \cite[Lemma 7.6]{BBGZ13}. Hence by passing to a subsequence, $\bH_\ell$ converges to a minimizer $\bH^{(k)}$ of $\cD^{(k)}$, and hence balanced.
Since $\bH^{(k)}$ is a minimizer of $\cD^{(k)}$, for any $\bphi \in \prod_i \cH_{[i]}$, we have
\begin{equation} \label{minqD}
\cD^{(k)}(\bH^{(k)}) \leq \cD^{(k)}(\bHilb_{\bMA({\bf 0})}^{(k)}(\bphi)).
\end{equation}
By using the asymptotic expansion of Bergman kernel (\cf Theorem \ref{asexp}), we have $\bFS^{(k)}(\bHilb_{\bMA({\bf 0})}^{(k)}(\bphi)) \to \bphi$ in the $C^0$-topology, which yields that $\cL(\bFS^{(k)}(\bHilb_{\bMA({\bf 0})}^{(k)}(\bphi)) \to \cL(\bphi)$. Moreover, we also have $\AM_{[i]}^{(k)}(\Hilb_{[i], \bMA({\bf 0})}^{(k)}(\bphi)) \to \AM_{[i]}(\phi_{[i]})$ (\cf \cite[Theorem A]{BB10}). Thus as in the formula \cite[page 242, line 21]{BBGZ13} in the case $N=1$, we know that
\begin{equation} \label{convqD}
\cD^{(k)}(\bHilb_{\bMA({\bf 0})}^{(k)}(\bphi)) \to \cD(\bphi).
\end{equation}
From \eqref{minqD} and \eqref{convqD}, we know that for any $\bphi \in \prod_i \cH_{[i]}$, there exists a sequence $\g^{(k)}=\g^{(k)}(\bphi) \to 0$ such that
\begin{equation} \label{ufb}
\cD^{(k)}(\bH^{(k)})-\cD(\bphi) \leq \g^{(k)}.
\end{equation}
Set $\bphi^{(k)}:=\bFS^{(k)}(\bH^{(k)})$. From \eqref{ebm}, after adjusting by some element in $G$, we may further assume that $\bH^{(k)}$ satisfies $\cD^{(k)}(\bH^{(k)}) \geq \d' \cJ^{(k)}(\bH^{(k)})-C'$. Putting all things together, we can compute
\begin{eqnarray*}
\cD(\bphi^{(k)})-\cD^{(k)}(\bH^{(k)}) &=&\cJ(\bphi^{(k)})-\cJ^{(k)}(\bH^{(k)})\\
&\leq& \d^{(k)} \cJ^{(k)}(\bH^{(k)})+\tau^{(k)}\\
&\leq& \frac{\d^{(k)}}{\d'} \cD^{(k)}(\bH^{(k)})+\tau^{(k)}+\frac{\d^{(k)}C'}{\d'}\\
&\to& 0
\end{eqnarray*}
since $\cD^{(k)}(\bH^{(k)})$ is uniformly controlled from above by \eqref{ufb}. Thus we obtain
\[
\liminf_{k \to \infty} \cD(\bphi^{(k)}) \leq \cD(\bphi)
\]
for all $\bphi \in \prod_i \cH_{[i]}$. In particular, $\bphi^{(k)}$ is a minimizing sequence of $\cD$. Hence, by using the coercivity of $\cD$ and compactness of the sublevel sets of $\cJ$ in the weak topology (\cf \cite[Lemma 3.3]{BBGZ13}), we can extract a weak convergent subsequence of $\bphi^{(k)}$, whose limit $\bphi^{(\infty)}$ should be CKE by the lower semicontinuity for $\cD$. We remark that the above arguments hold for {\it any} subsequence of $\bH_\ell$. Combining with the uniqueness result of CKE metrics modulo the $G$-action, we conclude that $\bphi^{(k)}$ converges to $\bphi^{(\infty)}$ weakly modulo the $G$-action without taking a subsequence. This completes the proof.
\end{proof}
%==============Section 5==========================
\section{Towards the $C^\infty$-convergence} \label{TCR}
%==============Subsection 5.1==========================
\subsection{Bergman kernel asymptotics and almost balanced metrics} \label{BKABM}
In what follows, we always assume that $\lambda=-1$, or $\lambda=1$ and $(X, (L_{[i]}))$ admits a CKE metric with discrete automorphisms.
Let us consider a single polarization $L \to X$ and a smooth probability measure $\nu$. Then for any $\phi \in \cH$, we define the {\it Bergman function} $B_\nu^{(k)}(\phi)$ by
\begin{equation} \label{Bergman}
B_\nu^{(k)}(\phi):=\sum_\a |s_\a|_{k\phi}^2,
\end{equation}
where $(s_\a)$ is {\it any} ONB with respect to $\Hilb_{\nu}^{(k)}(\phi)$. Indeed, the above definition is independent of the choice of $(s_\a)$. The function $B_\nu^{(k)}$ measures the deviation of $S_\nu^{(k)}$ from the identity
\[
S_\nu^{(k)}(\phi)-\phi=\frac{1}{k} \log \bigg( \frac{1}{D^{(k)}} B_\nu^{(k)}(\phi) \bigg).
\]
The asymptotic expansion of Bergman functions has been well studied \cite{Bou90, Cat99, Tia90, Zel98}. Especially, we use the asymptotic expansion of the Bergman function $B_\nu^{(k)}(\phi)$ associated to the space of global sections of $\C \otimes L^{\otimes k}$, where $\C$ denotes the trivial bundle equipped with the fiber metric $\nu/\MA(\phi)$ (\cf \cite{Zel98, MM07}):
\begin{thm} \label{asexp}
We have the following asymptotic expansion of $B_\nu^{(k)}$:
\begin{equation} \label{asebk}
B_\nu^{(k)}(\phi)=(b_0k^n+b_1k^{n-1}+b_2 k^{n-2}+\cdots) \cdot \frac{\MA(\phi)}{\nu},
\end{equation}
where $b_0=L^n/n!$ and each coefficient $b_j$ can be expressed as a polynomial in the Riemannian curvature ${\rm Riem}(\omega_\phi)$, the curvature of the fiber metric $\nu/\MA(\phi)$, their derivatives and contractions with respect to $\omega_\phi$. The above expansion is uniform in $\phi$ and $\nu$ as long as they stay in a compact set of $\phi$ in the $C^\infty$-topology for which $\omega_\phi$ is bounded from below (by some given K\"ahler metric). More precisely, for any integer $p$ and $r$, there exists a constant $C_{p,r}$ such that
\[
\bigg\| \frac{\nu}{\MA(\phi)}B_\nu^{(k)}(\phi) -\sum_{j=0}^p b_j k^{n-j} \bigg\|_{C^r} \leq C_{p, r} \cdot k^{n-p-1}.
\]
\end{thm}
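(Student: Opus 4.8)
The plan is to recognise $B_\nu^{(k)}(\phi)$ as the on-diagonal value of a twisted Bergman kernel and to reduce the statement to the Tian--Yau--Zelditch expansion in the form that allows an auxiliary twisting metric (\cf \cite{Zel98, MM07}), while carefully tracking the normalisations forced by our conventions and the uniformity in $(\phi,\nu)$.

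First I would put $h:=\hat h\,e^{-\phi}$, a smooth Hermitian metric on $L$ with curvature $\omega_\phi$, and let $w:=\nu/\MA(\phi)$, which is a smooth positive function on $X$ because $\nu$ is a smooth positive probability measure; I regard $w$ as a fibre metric on the trivial line bundle $\C$ over $X$. The identity
\[
\int_X |s|_{k\phi}^2\,\nu=\int_X |s|_{w\otimes h^{\otimes k}}^2\,\MA(\phi),\qquad s\in H^0(X,L^{\otimes k}),
\]
then shows that $\Hilb_\nu^{(k)}(\phi)$ is exactly the $L^2$-inner product on $H^0(X,\C\otimes L^{\otimes k})$ determined by the metric $w\otimes h^{\otimes k}$ and the probability volume form $\MA(\phi)=\omega_\phi^n/\Omega^n$. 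Hence, for an $\Hilb_\nu^{(k)}(\phi)$-ONB $(s_\a)$, the standard Bergman density is $\rho_k:=\sum_\a |s_\a|_{w\otimes h^{\otimes k}}^2=w\cdot B_\nu^{(k)}(\phi)$, so $B_\nu^{(k)}(\phi)=\rho_k\cdot\MA(\phi)/\nu$ and it suffices to expand $\rho_k$.

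Next I would invoke the asymptotic expansion of $\rho_k$ for the polarisation $(L,h)$ twisted by $(\C,w)$, computed on the K\"ahler manifold $(X,\omega_\phi)$ (\cf \cite{Cat99, Tia90, Zel98}, and \cite{MM07} for the twisted version with the explicit coefficients): in the $C^\infty$-topology one has $\rho_k=\sum_{j\ge 0}b_j\,k^{n-j}$, where each $b_j$ is a universal polynomial in the curvature of $\omega_\phi$, in the curvature of the weight $\nu/\MA(\phi)$, and in their covariant derivatives, all fully contracted using $\omega_\phi$. The leading term is the ratio of $\omega_\phi^n/n!$ to the reference volume form up to the universal factor $(2\pi)^{-n}$; since here the volume form $\MA(\phi)$ is normalised by $\Omega^n=(2\pi)^n L^n$, this gives $b_0=L^n/n!$. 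Dividing by $w$ then yields \eqref{asebk}.

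Finally, for the uniform estimates, I would use that Tian's peak-section construction (equivalently, the resolvent/heat-kernel analysis of \cite{MM07}) produces the coefficients $b_0,\ldots,b_p$ together with an $O(k^{n-p-1})$ remainder whose implied constant depends only on finitely many derivatives of the metric data $(\omega_\phi,w)$ and on a positive lower bound for $\omega_\phi$; a compactness argument over the prescribed $C^\infty$-bounded family of $(\phi,\nu)$ then produces a single constant $C_{p,r}$. \textbf{The step that needs care} is exactly this effectivity/uniformity: one must check that at each order only finitely many derivatives of $(\omega_\phi,w)$ enter, and that the constants can be chosen uniform over a $C^\infty$-compact family with uniformly bounded geometry. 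This is where I would lean on the quantitative forms of the TYZ expansion already exploited in this circle of ideas (\cf \cite{Don01, Fin10}).
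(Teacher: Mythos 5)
Your proposal is correct and takes essentially the same approach the paper intends: the paper states Theorem~\ref{asexp} without proof, citing \cite{Zel98, MM07} and indicating that it is the TYZ expansion for $H^0(X,\C\otimes L^{\otimes k})$ with $\C$ carrying the auxiliary metric $\nu/\MA(\phi)$. Your reduction via $h=\hat h e^{-\phi}$, $w=\nu/\MA(\phi)$, the identity $\Hilb_\nu^{(k)}(\phi)=\Hilb_{w\otimes h^{\otimes k},\MA(\phi)}$, the relation $B_\nu^{(k)}(\phi)=\rho_k\cdot\MA(\phi)/\nu$, and the normalization check $b_0=L^n/n!$ coming from $\Omega^n=(2\pi)^nL^n$ are exactly what one needs, and the reliance on the quantitative/uniform forms of the expansion over a $C^\infty$-compact family with uniformly positive curvature (\cf \cite{Don01,Fin10}, or the resolvent estimates of \cite{MM07}) is the standard and intended route.
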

For the coupled settings, let $\bnu$ be an $N$-tuple of probability measures, and define the function $B_{[i],\bnu}^{(k)}(\bphi)$ by the sum of the norm square of any $\Hilb_{[i], \bnu}^{(k)}(\bphi)$-ONB measured by the fiber metric $| \cdot |_{k\phi_{[i]}}^2$. We set
\[
\bar{B}_{[i],\bnu}^{(k)}(\bphi):=\frac{1}{D_{[i]}^{(k)}} B_{[i],\bnu}^{(k)}(\bphi).
\]
Especially, we are interested in the $N$-tuple of Bergman functions $B_{[1], \bmu}^{(k)}(\bphi),\ldots, B_{[N], \bmu}^{(k)}(\bphi)$. By \eqref{Bergman}, a metric $\bphi \in \prod_i \cH_{[i]}$ is balanced if and only if
\[
\bar{B}_{[1],\bmu}^{(k)}(\bphi)=\ldots=\bar{B}_{[N],\bmu}^{(k)}(\bphi)=1.
\]
By Theorem \eqref{asexp}, at the CKE potential $\bphi_{\CKE}$, we obtain
\[
\bar{B}_{[i], \bmu}^{(k)}(\bphi_{\CKE})=1+O(k^{-1}), \quad i=1,\ldots, N.
\]
However it is not enough for proving the $C^\infty$-convergence of balanced metrics. Following argument is essentially same as \cite[Section 4.1]{Don01}: for any integer $\ell$, we will find a potential $\bphi_\ell=(\phi_{[i], \ell})$ such that
\[
\bar{B}_{[i], \bmu}^{(k)}(\bphi_\ell)=1+O(k^{-\ell-1}), \quad i=1,\ldots, N.
\]
We call such metrics {\it almost balanced metrics}. More precisely, we try to construct such a metric by adding the polynomial of $k^{-1}$ to $\bphi_{\CKE}$ whose the coefficients $\eta_{[i], j}$'s are smooth functions:
\[
\phi_{[i], \ell}:=\phi_{\CKE, [i]}+\sum_{j=1}^\ell k^{-j} \eta_{[i], j},
\]
We note that for any fixed $\ell$, the potential $\phi_{[i], \ell}$ stays in a compact set in the $C^\infty$-topology since it includes only finitely many perturbations. Now we will find $\bet_1:=(\eta_{[1],1},\ldots, \eta_{[N], 1})$  satisfying the above property. For any $\bphi \in \prod_i \cH_{[i]}$, we write the coefficients in the asymptotic expansion of $\bar{B}_{[i], \bmu}^{(k)}(\bphi)$ as
\[
B_{[i], \bmu}^{(k)}(\bphi)=(b_{[i], 0}(\bphi) k^n+b_{[i], 1}(\bphi) k^{n-1}+b_{[i], 2}(\bphi) k^{n-2}+\cdots) \cdot \frac{\MA(\phi_{[i]})}{\mu(\bphi)}.
\]
Since each $b_{[i],p}(\bphi)$ are polynomials in the Riemannian curvature ${\rm Riem}(\omega_{\phi_{[i]}})$, $\mu(\bphi)/\MA(\phi_{[i]})$, together with their derivatives and metric contractions, it follows that
\[
b_{[i],p}(\bphi_1)=b_{[i],p}(\bphi_{\CKE})+O(k^{-1}), \quad p=0,1,2,\ldots.
\]
On the other hand, from Proposition \ref{vfmu}, the linearization of $\MA(\phi_{[i]})/\mu(\bphi)$ in the direction $(\d \phi_{[1]},\ldots, \d \phi_{[N]})$ is given by
\[
\frac{\MA(\phi_{[i]})}{\mu(\bphi)}\bigg(\Delta_{\phi_{[i]}} \d \phi_{[i]}+\lambda \sum_j \d \phi_{[j]} -\lambda \sum_j \int_X \d \phi_{[j]} \mu(\bphi) \bigg).
\]
Thus we have an expansion
\begin{eqnarray*}
B_{[i], \bmu}^{(k)}(\bphi_1) &=& \frac{L_{[i]}^n}{n!} k^n+\bigg( b_{[i],1}(\bphi_{\CKE})+\Delta_{\phi_{\CKE, [i]}} \eta_{[i],1}+\lambda \sum_j \eta_{[j], 1} -\lambda \sum_j \int_X \eta_{[j], 1} \mu(\bphi_{\CKE}) \bigg) k^{n-1}\\
&+& O(k^{n-2}).
\end{eqnarray*}
Hence what we have to do is to find an $N$-tuple of functions $\bet_1$ solving the coupled Poisson equations:
\begin{equation} \label{cfeq}
\begin{cases}
\Delta_{\phi_{\CKE, [1]}} \eta_{[1],1}+\lambda \sum_j \eta_{[j],1}-\lambda \sum_j \int_X \eta_{[j],1} \mu(\bphi_{\CKE}) +b_{[1],1}(\bomega_{\bphi_{\CKE}})=c_{[1],1}\\
\hspace{20mm}
\vdots \\
\Delta_{\phi_{\CKE, [N]}} \eta_{[N],1}+\lambda \sum_j \eta_{[j],1}-\lambda \sum_j \int_X \eta_{[j],1} \mu(\bphi_{\CKE}) +b_{[N],1}(\bomega_{\bphi_{\CKE}})=c_{[N],1}\\
\end{cases}
\end{equation}
with some constants $c_{[1],1}, \ldots, c_{[N],1}$. If we can do this, we have $\bar{B}_{[i], \bmu}^{(k)}(\bphi_1)=1+O(k^{-2})$ for all $i=1,\ldots, N$ with this $\bet_1$, and will continue the argument. For the general $\ell$th-step, taking into the account that $b_{[i],p}(\bphi_\ell)=b_{[i],p}(\bphi_{\ell-1})+O(k^{-\ell})$, the first contribution of $\bet_\ell$ to $b_{[i],p}(\bphi_\ell)$ occurs at $O(k^{-\ell})$. Thus the similar argument shows that the equation that we should solve is given by
\begin{equation} \label{cleq}
\begin{cases}
\Delta_{\phi_{\CKE, [1]}} \eta_{[1],\ell}+\lambda \sum_j \eta_{[j], \ell}-\lambda \sum_j \int_X \eta_{[j],\ell} \mu(\bphi_{\CKE})+F_{[1],\ell}(\bomega_{\bphi_{\CKE}}, \bet_1, \ldots, \bet_{\ell-1})=c_{[1],\ell} \\
\hspace{20mm}
\vdots \\
\Delta_{\phi_{\CKE, [N]}} \eta_{[N], \ell}+\lambda \sum_j \eta_{[j], \ell}-\lambda \sum_j \int_X \eta_{[j],\ell} \mu(\bphi_{\CKE})+F_{[N],\ell}(\bomega_{\bphi_{\CKE}}, \bet_1, \ldots, \bet_{\ell-1})=c_{[N],\ell}\\
\end{cases}
\end{equation} 
where $c_{[1],\ell}, \ldots, c_{[N],\ell}$ are some constants and the functions $F_{[i],\ell} \in C^\infty(X;\R)$  depend smoothly on the data $(\bomega_{\bphi_{\CKE}}, \bet_1, \ldots, \bet_{\ell-1})$ constructed in the previous steps. From Proposition \ref{wlaplacian}, we may take the constants
\[
c_{[i],\ell}=\int_X F_{[i],\ell} \mu(\bphi_{\CKE}), \quad i=1,\ldots, N,
\]
so that $(c_{[i],\ell}-F_{[i],\ell})_{i=1,\ldots,N}$ lies in the orthogonal complement of $\Ker \cP_{\bphi_{\CKE}}$ with respect to the Hermitian product \eqref{lproduct}, which is equivalent to say that the system of equations \eqref{cleq} has a solution $\bet_\ell$. We note that the solution $\bet_\ell$ is real since the CKE condition yields that the operator $\cP_{\bphi_{\CKE}}$ is real. Summarizing up, we obtain:
\begin{thm}[Theorem \ref{happroximation}]
Let $X$ be a compact K\"ahler manifold with $\lambda c_1(X)>0$ ($\lambda=\pm 1$), and $(L_{[i]})$ a rational decomposition of $K_X^{\otimes -\lambda}$. In the $\lambda=1$ case, we further assume that the decomposition $(L_{[i]})$ admits a coupled K\"ahler-Einstein metric and $\Aut_0(X)$ is trivial. Let $\bphi_{\CKE} \in \prod_i \cH_{[i]}$ denote the unique coupled K\"ahler-Einstein metric. Then there exist $\bet_1, \ldots, \bet_\ell \in (C^\infty(X;\R))^N$ such that $\bphi_\ell:=\bphi_{\CKE}+\sum_{j=1}^\ell k^{-j} \bet_j$ satisfies
\begin{equation} \label{hberg}
\bar{B}_{[i], \bmu}^{(k)}(\bphi_\ell)=1+O(k^{-\ell-1}), \quad i=1,\ldots, N.
\end{equation}
\end{thm}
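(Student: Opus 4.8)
The plan is to build the functions $\bet_j$ one at a time by induction on $j$, following \cite[Section 4.1]{Don01}: the asymptotic expansion of the Bergman function (Theorem \ref{asexp}) is the engine, and the surjectivity of $\cP_{\bphi_{\CKE}}$ modulo constants (Proposition \ref{wlaplacian}) is the mechanism that solves the resulting linear systems. For the base case, at $\bphi=\bphi_{\CKE}$ the coupled Ricci potentials vanish, so $\MA(\phi_{\CKE,[i]})=\mu(\bphi_{\CKE})$ for all $i$; combining Theorem \ref{asexp} with $D_{[i]}^{(k)}=\frac{L_{[i]}^n}{n!}k^n+O(k^{n-1})$ gives $\bar B_{[i],\bmu}^{(k)}(\bphi_{\CKE})=1+O(k^{-1})$, which is \eqref{hberg} for $\ell=0$.

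Now suppose $\bet_1,\dots,\bet_{\ell-1}$ have been found so that $\bphi_{\ell-1}:=\bphi_{\CKE}+\sum_{j=1}^{\ell-1}k^{-j}\bet_j$ satisfies $\bar B_{[i],\bmu}^{(k)}(\bphi_{\ell-1})=1+O(k^{-\ell})$, and write $\bphi_\ell=\bphi_{\ell-1}+k^{-\ell}\bet_\ell$. Two facts drive the induction. First, each $b_{[i],p}$ is a universal polynomial in the curvature of $\omega_{\phi_{[i]}}$, the curvature of the weight $\mu/\MA(\phi_{[i]})$ and their covariant derivatives, so perturbing $\bphi_{\ell-1}$ by $O(k^{-\ell})$ moves $b_{[i],p}$ only by $O(k^{-\ell})$; hence the leading effect of $\bet_\ell$ on $\bar B_{[i],\bmu}^{(k)}$ sits at order $k^{-\ell}$. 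Second, that effect is exactly the linearization of $\MA(\phi_{[i]})/\mu(\bphi)$ at $\bphi_{\CKE}$ applied to $\bet_\ell$, which by Proposition \ref{vfmu} equals $\Delta_{\phi_{\CKE,[i]}}\eta_{[i],\ell}+\lambda\sum_j\eta_{[j],\ell}-\lambda\sum_j\int_X\eta_{[j],\ell}\,\mu(\bphi_{\CKE})$. Collecting the order-$k^{-\ell}$ term that must be cancelled, the requirement $\bar B_{[i],\bmu}^{(k)}(\bphi_\ell)=1+O(k^{-\ell-1})$ becomes the linear system \eqref{cleq} with a datum $F_{[i],\ell}\in C^\infty(X;\R)$ assembled from $(\bomega_{\bphi_{\CKE}},\bet_1,\dots,\bet_{\ell-1})$ and an undetermined constant vector $(c_{[i],\ell})$.

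The crux is solving \eqref{cleq}. After subtracting the constants, its left-hand side is precisely $\cP_{\bphi_{\CKE}}$ applied to $\bet_\ell$, since at a CKE potential the terms $(\p\r_{[i]},\p\,\cdot\,)_{[i]}$ vanish ($\r_{[i]}(\bphi_{\CKE})=0$) and the measure in the Hermitian product \eqref{lproduct} is $\bmu(\bphi_{\CKE})$. By Proposition \ref{wlaplacian}, $\cP_{\bphi_{\CKE}}$ is elliptic, self-adjoint and non-positive with $\Ker\cP_{\bphi_{\CKE}}\simeq\C^N$ the constants (this is where the hypothesis $\lambda=-1$ or $G$ trivial enters), so its image is the orthogonal complement of the constants for \eqref{lproduct}. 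Taking $c_{[i],\ell}:=\int_X F_{[i],\ell}\,\mu(\bphi_{\CKE})$ makes $(c_{[i],\ell}-F_{[i],\ell})_i$ orthogonal to $\Ker\cP_{\bphi_{\CKE}}$, so \eqref{cleq} has a solution; elliptic regularity gives $\bet_\ell\in(C^\infty(X;\R))^N$, the reality being automatic because $\cP_{\bphi_{\CKE}}$ is a real operator at the CKE metric. This closes the induction.

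It remains to note the uniformity in $k$: for each fixed $\ell$ the potential $\bphi_\ell$ is a finite perturbation of $\bphi_{\CKE}$, hence stays in a $C^\infty$-compact set on which $\omega_{\phi_{[i],\ell}}$ is bounded below, so the remainder bounds of Theorem \ref{asexp} apply with $\ell$-dependent but $k$-independent constants, yielding \eqref{hberg} in the stated form. I expect the only genuine difficulty to be the bookkeeping: tracking precisely which power of $k$ in the expansion of $\bar B_{[i],\bmu}^{(k)}(\bphi_\ell)$ is affected by $\bet_\ell$, and confirming that the obstruction to killing it is governed exactly by $\cP_{\bphi_{\CKE}}$ so that Proposition \ref{wlaplacian} applies verbatim. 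Once that identification is in place, the remaining arguments are routine.
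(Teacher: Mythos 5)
Your proposal is correct and follows essentially the same route as the paper's own argument in Section~\ref{BKABM}: it uses Theorem~\ref{asexp} for the expansion, Proposition~\ref{vfmu} to identify the linearization of $\MA(\phi_{[i]})/\mu(\bphi)$ as the operator $\cP_{\bphi_{\CKE}}$ (since $\r_{[i]}(\bphi_{\CKE})=0$), and Proposition~\ref{wlaplacian} to solve \eqref{cleq} after choosing $c_{[i],\ell}=\int_X F_{[i],\ell}\,\mu(\bphi_{\CKE})$. The only cosmetic difference is that you organize the construction as an explicit induction on $\ell$, while the paper presents the first step then indicates how the general step proceeds; the substance, including the reality of $\bet_\ell$ and the uniformity remark from $C^\infty$-compactness, is identical.
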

%==============Subsection 5.2==========================
\subsection{$R$-bounded geometry}
Take an arbitrary number $R>0$ and consider the single polarization $L \to X$. In this subsection, we review the $R$-bounded geometry and some related results about it (\cf \cite{Don01, Fin10}). We note that in Donaldson's paper \cite{Don01}, he considered ``large'' metrics which live in $2\pi kc_1(L)$, while we consider the rescaled ones in $2\pi c_1(L)$ in order to unify the notations throughout the paper. Also we note that the distance $\dist^{(k)}$ and length of paths in $\cH^{(k)}$ are computed with respect to the normalized Riemannian structure $\langle A, B \rangle_H=\frac{1}{k^2 D^{(k)}} \Tr(A \circ B)$.
\begin{dfn}
We say that a K\"ahler metric $\omega \in 2 \pi c_1(L)$ has $R$-bounded geometry in $C^r$ (with respect to $\hat{\omega}$) if $\omega \geq R^{-1} \hat{\omega}$ and
\[
\|k\omega-k\hat{\omega}\|_{C^r(k\hat{\omega})} <R.
\]
\end{dfn}
\begin{rem} \label{unifk}
The condition $\|\omega-\hat{\omega}\|_{C^0(\hat{\omega})} <R$ yields that $\Tr_{\hat{\omega}} \omega \leq c$. Thus we have another bound $\omega \leq c \cdot \hat{\omega}$ and hence $\osc_X \phi \leq c$ by the Ko\l odziej's $C^0$-estimate \cite{Kol03, Kol05}, where we set $\omega=\hat{\omega}+\dd \phi$.
\end{rem}
\begin{lem}[Lemma 13, \cite{Fin10}] \label{cKL}
Let $H(s)$ be a smooth path in $\cH^{(k)}$. If $\omega(s):=\omega_{\FS^{(k)}(H(s))}$ have $R$-bounded geometry in $C^r$ and $\| \overline{M}_{\MA(\FS^{(k)}(H(s)))}(H(s)) \|_{\op}=K \cdot k^{-n}$, then
\[
\|k\omega(0)-k\omega(1)\|_{C^{r-2}(k\hat{\omega})}<cKL \cdot k^{\frac{3}{2}n}.
\]
where $c$ is a uniform constant which only depends on $r$ and $R$, and $L$ is a length of the path $\{H(s)\}_{0 \leq s \leq 1}$.
\end{lem}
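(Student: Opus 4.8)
This is Lemma 13 of \cite{Fin10}; it involves only a single polarization $L\to X$, so I would simply reproduce the argument given there. The plan is as follows. Write $A(s):=\dot H(s)\in\sqrt{-1}\fu_{H(s)}^{(k)}$ for the velocity of the path and $h_{A(s)}=\Tr(A(s)\circ M(H(s)))$ for the associated Hamiltonian of Section \ref{sfspl}. Differentiating the identity $k\omega(s)=\omega_{\aFS(H(s))}|_X$ along the path and using the differentiation formula $\ddt\FS^{(k)}(H(t))=k^{-1}h_{\dot H(t)}$ established in the proof of Proposition \ref{qdf}, I obtain
\[
k\omega(1)-k\omega(0)=\int_0^1\dd\,h_{A(s)}\,ds .
\]
Hence the statement reduces to the pointwise estimate $\norm{\dd\,h_{A(s)}}_{C^{r-2}(k\hat\omega)}\le c(R,r)\norm{A(s)}_{\op}$, followed by integration and the conversion of operator norms into the Riemannian length.

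For the pointwise estimate I would argue in two stages. First, on the ambient projective space $\P^\ast(H^0(X,L^{\otimes k}))$ the function $h_A$ is, in any $\omega_{\aFS(H(s))}$-unitary affine chart, the quotient of a degree-two polynomial and $1+|w|^2$ whose Taylor coefficients at the centre are entries of $A$ weighted by purely combinatorial constants; consequently $\norm{h_A}_{C^r}$ with respect to $\omega_{\aFS(H(s))}$ is at most $c_r\norm{A}_{\op}$, with $c_r$ \emph{independent of} $D^{(k)}$. Second, the Kodaira map $\iota_k$ realises $(X,k\omega(s))$ as a K\"ahler submanifold of $(\P^\ast(H^0(X,L^{\otimes k})),\omega_{\aFS(H(s))})$, and its covariant derivatives up to order $r-1$ --- equivalently the second fundamental form of $\iota_k(X)$ and its derivatives --- are controlled, via the Gauss--Codazzi equations, by the uniformly bounded curvature of the ambient Fubini-Study metric together with the curvature of $k\omega(s)$ and its derivatives; the latter are bounded by the $R$-bounded geometry hypothesis in $C^r$ (together with the $C^0$-estimate of Remark \ref{unifk}). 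Pulling back therefore yields $\norm{h_{A(s)}|_X}_{C^r(k\omega(s))}\le c(R,r)\norm{A(s)}_{\op}$; applying $\dd$ costs the two derivatives appearing in the statement, and since $R$-bounded geometry makes $k\omega(s)$ and $k\hat\omega$ uniformly $C^r$-equivalent, one concludes $\norm{\dd\,h_{A(s)}}_{C^{r-2}(k\hat\omega)}\le c(R,r)\norm{A(s)}_{\op}$.

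It then remains to pass to the length. By the normalisation $\langle A,B\rangle_H=\frac{1}{k^2 D^{(k)}}\Tr(A\circ B)$ one has $\norm{A(s)}_{\op}\le\norm{A(s)}_{\mathrm{HS}}=k\,(D^{(k)})^{1/2}\langle A(s),A(s)\rangle_{H(s)}^{1/2}$, and $D^{(k)}\le C k^{n}$, so integrating over $[0,1]$ gives $\int_0^1\norm{A(s)}_{\op}\,ds\le C'k^{1+n/2}L$; since $\overline M_{\MA(\FS^{(k)}(H(s)))}(H(s))$ is positive semidefinite of trace one, its operator norm is $\ge (D^{(k)})^{-1}$, so the hypothesis forces $K\ge c_0>0$ and hence $k^{1+n/2}\le c_1 K k^{3n/2}$, which gives the asserted bound with a constant depending only on $r$ and $R$. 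The genuinely delicate point --- and where the $R$-bounded geometry hypothesis is indispensable --- is the first stage above: the ambient projective space has dimension of order $k^{n}$, and one must verify that this never enters the constants, which is precisely what the uniform control of the embedding geometry achieves.
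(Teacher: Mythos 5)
The paper gives no proof of this lemma; it is a direct citation of Fine's work (\cite{Fin10}, Lemma 13), stated in the paper's normalised conventions, so there is nothing of Takahashi's to compare against. As a reconstruction of Fine's argument, your outline is mostly in the right spirit: the integral formula $k\omega(1)-k\omega(0)=\int_0^1\dd\,h_{A(s)}\,ds$, the reduction to a $C^{r-2}(k\hat\omega)$ bound on $\dd h_{A(s)}|_X$ in terms of $\|A(s)\|_{\op}$, and the final conversion $\|A\|_{\op}\le\|A\|_{\mathrm{HS}}=k(D^{(k)})^{1/2}\langle A,A\rangle_H^{1/2}$ all line up with Fine's scheme. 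But one intermediate claim, as stated, is false: you assert that $\|h_A\|_{C^r}$ on the \emph{ambient} projective space (w.r.t.\ $\omega_{\aFS(H)}$) is bounded by $c_r\|A\|_{\op}$ with $c_r$ independent of $D^{(k)}$. If $C^r$ means the usual tensor (Hilbert--Schmidt) norm, this already fails at $r=2$: in the unitary affine chart, $\partial_\alpha\partial_{\bar\beta}h_A|_0=A_{\beta\alpha}-\delta_{\alpha\beta}A_{11}$ for $\alpha,\beta\ge 2$, whose full Frobenius norm is of order $(D^{(k)})^{1/2}\|A\|_{\op}$ because of the sum over $D^{(k)}-1$ diagonal entries. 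What \emph{is} dimension-free is the spectral norm, equivalently the restriction of $\nabla^m h_A$ to the finitely many directions tangent to the embedded $X$; the honest statement (and the one Fine actually uses after controlling the second fundamental form via $R$-bounded geometry) is the estimate on $\|\dd(h_A|_X)\|_{C^{r-2}(k\hat\omega)}$ directly, not on an ambient $C^r$-norm of $h_A$. Your ``genuinely delicate point'' remark is the right instinct, but the proposal phrases the ambient estimate in a way that, read literally, is wrong and would propagate $D^{(k)}$-dependence into the constant.

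Separately, note that your derivation never actually uses the hypothesis $\|\overline{M}_{\MA(\FS^{(k)}(H(s)))}(H(s))\|_{\op}=K\,k^{-n}$; you obtain a bound of order $L\,k^{1+n/2}$ and then invoke the trivial lower bound $K\ge c_0$ to deduce the looser stated bound $cKL\,k^{3n/2}$. This is logically fine (an upper bound stronger than claimed still proves the claim), but it means your bookkeeping does not actually reproduce Fine's lemma in the form where $K$ carries genuine information (the moment-map quantity $K$ is bounded above \emph{a priori} only by $k^n/D^{(k)}\cdot D^{(k)}=k^n$, and it is precisely the range $K\gg 1$ that the $K$-factor is there to accommodate). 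So if the goal was truly to ``reproduce the argument given there,'' you should either follow Fine's own bound with the $K$-dependence active, or explicitly flag that you are proving a sharper estimate. Neither issue invalidates the conclusion, but the ambient-$C^r$ misstatement is a genuine slip that should be corrected.
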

\begin{lem}[Lemma 14, \cite{Fin10}] \label{bnhd}
Let $H^{(k)} \in \cH^{(k)}$ be a sequence of Hermitian forms such that the corresponding sequence of K\"ahler forms $\omega_{\FS^{(k)}(H^{(k)})}$ has $R/2$-bounded geometry in $C^{r+2}$ and such that $\|\overline{M}_{\MA(\FS^{(k)}(H^{(k)}))}(H^{(k)}) \|_{\op}=O(k^{-n})$. Then there exists a constant $c>0$ (which depends only on $R$, but not on $k$) such that if $H \in \cH^{(k)}$ satisfies $\dist^{(k)}(H^{(k)}, H)<c k^{-\frac{n}{2}-1}$, then the corresponding K\"ahler form $\omega_{\FS^{(k)}(H)}$ has $R$-bounded geometry in $C^r$.
\end{lem}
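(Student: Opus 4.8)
This is a single-polarization statement — it will be used afterwards for each factor $(X,L_{[i]})$ in turn — and is in essence \cite[Lemma 14]{Fin10}; the plan is to reproduce the continuity argument along a geodesic in $\cH^{(k)}$ that underlies it. Fix a small constant $c>0$, depending only on $R$, and let $H\in\cH^{(k)}$ satisfy $\dist^{(k)}(H^{(k)},H)<ck^{-\frac{n}{2}-1}$. Since $\cH^{(k)}$ is a Riemannian symmetric space, the geodesic from $H^{(k)}=:H(0)$ to $H=:H(1)$ has the form $H(s)=\exp(-sA)\cdot H^{(k)}$ with $A\in\sqrt{-1}\,\fu^{(k)}_{H^{(k)}}$ and constant speed, so that $\|A\|_{\op}\leq\|A\|_{\mathrm{HS}}=\dist^{(k)}(H^{(k)},H)\cdot k\sqrt{D^{(k)}}=O(c)$ by the normalisation $\langle A,A\rangle_H=\tfrac{1}{k^2D^{(k)}}\Tr(A^2)$ together with $D^{(k)}\sim k^n$. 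In particular the potential moves slowly: $\sup_X\bigl|\FS^{(k)}(H(s))-\FS^{(k)}(H^{(k)})\bigr|=O(c/k)$ for all $s\in[0,1]$.

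The heart of the matter is a connectedness argument. Set
\[
S:=\bigl\{\,s_*\in[0,1]\ \bigm|\ \omega_{\FS^{(k)}(H(s))}\ \text{has }R\text{-bounded geometry in }C^r\ \text{for all }s\in[0,s_*]\,\bigr\}.
\]
Then $0\in S$, since $\omega_{\FS^{(k)}(H^{(k)})}$ has $\tfrac{R}{2}$-bounded geometry in $C^{r+2}$ and hence, with room to spare, $R$-bounded geometry in $C^r$; and $S$ is closed, because $s\mapsto\omega_{\FS^{(k)}(H(s))}$ is continuous in $C^r$ (reading the defining inequalities non-strictly, which is harmless given the strict bounds obtained below). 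It remains to prove that $S$ is open; connectedness of $[0,1]$ will then force $S=[0,1]$, and in particular the conclusion at $s=1$.

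For openness, fix $s_*\in S$. First I would propagate the operator-norm hypothesis along $[0,s_*]$: using the identity $D^{(k)}\,\|\overline{M}_{\MA(\FS^{(k)}(H))}(H)\|_{\op}=\sup_{\|t\|_H=1}\int_X|t|^2_{k\FS^{(k)}(H)}\,\MA(\FS^{(k)}(H))$, one sees that moving along the short geodesic multiplies $\|t\|_{H(s)}^2$ (for fixed $t$), the weight $e^{-k\FS^{(k)}(H(s))}$ and the volume form $\MA(\FS^{(k)}(H(s)))$ — the last two controlled by the $R$-bounded geometry available on $[0,s_*]$ — each only by a factor bounded in terms of $R$, so that $\|\overline{M}_{\MA(\FS^{(k)}(H(s)))}(H(s))\|_{\op}=K\cdot k^{-n}$ with $K\leq K_0(R)$ throughout $[0,s_*]$, starting from the hypothesis $\|\overline{M}_{\MA(\FS^{(k)}(H^{(k)}))}(H^{(k)})\|_{\op}=O(k^{-n})$. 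Feeding this and $\dist^{(k)}(H^{(k)},H)<ck^{-\frac{n}{2}-1}$ into Lemma \ref{cKL}, together with the two-sided metric bound and potential-oscillation bound of Remark \ref{unifk} (which absorb the two derivatives lost in Lemma \ref{cKL}) and the $C^{r+2}$-control of $\omega_{\FS^{(k)}(H^{(k)})}$ at $s=0$, I would obtain
\begin{gather*}
\bigl\|k\omega_{\FS^{(k)}(H(s_*))}-k\hat{\omega}\bigr\|_{C^r(k\hat{\omega})}<\tfrac{R}{2}+C_0(R)\,c,\\
\omega_{\FS^{(k)}(H(s_*))}\geq\bigl(\tfrac{2}{R}-C_0(R)\,c\bigr)\hat{\omega}.
\end{gather*}
Choosing $c=c(R)$ so small that $C_0(R)\,c<\min\{\tfrac{R}{2},\tfrac{1}{R}\}$ makes both inequalities strict at $s_*$; by continuity they persist on a neighbourhood of $s_*$ in $[0,1]$, whence $S$ is open. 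This completes the argument.

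The step I expect to be the main obstacle is precisely this last one, for two reasons. On one hand one must check that the polynomial factor $k^{\frac{3}{2}n}$ in Lemma \ref{cKL} is exactly compensated by the $k^{-\frac{n}{2}-1}$ radius of the geodesic ball, so that the error $C_0(R)\,c$ is genuinely $k$-independent and fits inside the $\tfrac{R}{2}$ slack the hypothesis leaves; and one must organise the two-derivative loss in Lemma \ref{cKL} so that the continuity argument closes at the level $C^r$ rather than degrading by two derivatives at each step — this is where the potential-theoretic $C^0$-estimate of Remark \ref{unifk} enters. On the other hand, propagating the bound $\|\overline{M}_{\MA(\FS^{(k)}(H(s)))}(H(s))\|_{\op}=O(k^{-n})$ uniformly along the path, so that Lemma \ref{cKL} applies with a $k$-independent $K$, uses both the smallness of $c$ and the a priori $R$-bounded geometry, and is the other delicate point. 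All of this is carried out in \cite{Fin10} in the single-polarization case, and nothing changes when the statement is applied to each $(X,L_{[i]})$ in the coupled setting.
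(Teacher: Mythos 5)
The paper gives no proof of this lemma; it is quoted verbatim from Fine and used as a black box, so there is no in-paper argument to compare against, only Fine's. Your reconstruction follows the correct outline of Fine's argument (openness/closedness along the geodesic from $H^{(k)}$ to $H$, using Lemma~\ref{cKL} to bound the drift of the K\"ahler form and propagating the $\overline{M}$ operator-norm bound), and the propagation of the operator-norm bound is fine as you state it and is essentially Lemma~\ref{opop}. The two places you flag as ``the main obstacle'' are, however, exactly where the sketch does not go through as written.

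First, the arithmetic does not cancel: $k^{3n/2}\cdot k^{-n/2-1}=k^{n-1}$, which is not $O(1)$ for $n\geq 2$, so the Lemma~\ref{cKL} error is \emph{not} ``exactly compensated'' by the geodesic ball radius, contrary to your assertion. (Note that Lemma~\ref{opop} carries the factor $k^{n/2+1}$, which does pair correctly with the ball radius $k^{-n/2-1}$; in Fine's own normalization the corresponding exponents in his Lemmas~13 and~14 are $k^{1/2}$ and $k^{-1/2}$, which do cancel. The mismatch here appears to be an internal inconsistency in the paper's transcription of Fine's Lemma~13 into the rescaled Riemannian metric $\langle A,B\rangle_H=\frac{1}{k^2D^{(k)}}\Tr(A\circ B)$, not something you can simply assert away.) Second, the derivative bookkeeping: Lemma~\ref{cKL} produces a bound in $C^{r-2}(k\hat{\omega})$ from $R$-bounded geometry in $C^r$, while openness of your set $S$ needs a $C^r$ bound, and Remark~\ref{unifk} does not ``absorb the two derivatives'' — that remark is Ko\l odziej's $C^0$-estimate and controls only the oscillation of the potential and the two-sided $C^0$ equivalence of the metrics, not higher-order norms. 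To close at $C^r$ you must apply Lemma~\ref{cKL} at level $r+2$, which requires $R$-bounded geometry in $C^{r+2}$ along the whole sub-interval; the hypothesis only gives $R/2$-bounded geometry in $C^{r+2}$ at $s=0$, and the set $S$ as you have defined it (at level $C^r$) does not deliver $C^{r+2}$ control along the path. This is the genuine gap in the openness step and needs an argument that your sketch does not supply.
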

\begin{lem}[Proposition 24, \cite{Fin10}] \label{opop}
For any $H, H^\dag \in \cH^{(k)}$, we have
\[
\|\overline{M}_{\MA(\FS^{(k)}(H^\dag))}(H^\dag)\|_{\op} \leq \exp(c k^{\frac{n}{2}+1} \dist^{(k)}(H, H^\dag)) \|\overline{M}_{\MA(\FS^{(k)}(H))}(H)\|_{\op}
\]
for some ($k$-independent) uniform constant $c>0$.
\end{lem}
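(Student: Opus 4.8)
The plan is to convert both operator norms into comparisons of Hermitian forms on $H^0(X,L^{\otimes k})$. The point of departure is the identity already used in the proof of Proposition \ref{qdf}: for any probability measure $\nu$ and any $H$-ONB $(s_\alpha)$ of $H^0(X,L^{\otimes k})$,
\[
\overline{M}_\nu(H)_{\alpha\beta}=\frac{1}{D^{(k)}}(s_\alpha,s_\beta)_{\Hilb_\nu^{(k)}(\FS^{(k)}(H))},
\]
so that, in an $H$-ONB, the matrix $\overline{M}_\nu(H)$ equals $(D^{(k)})^{-1}$ times the Gram matrix of $\Hilb_\nu^{(k)}(\FS^{(k)}(H))$. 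Applying this with $\nu=\MA(\FS^{(k)}(\cdot))$ and abbreviating $T:=\Hilb_{\MA(\FS^{(k)}(H))}^{(k)}(\FS^{(k)}(H))$, $T^\dag:=\Hilb_{\MA(\FS^{(k)}(H^\dag))}^{(k)}(\FS^{(k)}(H^\dag))$, one gets
\[
\big\|\overline{M}_{\MA(\FS^{(k)}(H))}(H)\big\|_{\op}=\frac{1}{D^{(k)}}\sup_{0\ne s}\frac{\|s\|_T^2}{\|s\|_H^2},\qquad
\big\|\overline{M}_{\MA(\FS^{(k)}(H^\dag))}(H^\dag)\big\|_{\op}=\frac{1}{D^{(k)}}\sup_{0\ne s}\frac{\|s\|_{T^\dag}^2}{\|s\|_{H^\dag}^2}.
\]
Let $\mu_1,\dots,\mu_{D^{(k)}}$ be the logarithms of the eigenvalues of $H^\dag$ relative to $H$ and $M:=\max_\alpha|\mu_\alpha|$. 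The Bergman geodesic from $H$ to $H^\dag$ is generated by the Hermitian matrix with these eigenvalues, so $\dist^{(k)}(H,H^\dag)^2=\tfrac1{k^2 D^{(k)}}\sum_\alpha\mu_\alpha^2\ge M^2/(k^2D^{(k)})$; since $D^{(k)}=O(k^n)$ by Riemann--Roch, this gives $M\le k\sqrt{D^{(k)}}\,\dist^{(k)}(H,H^\dag)\le Ck^{\frac n2+1}\dist^{(k)}(H,H^\dag)$ with $C$ uniform. It is therefore enough to bound, pointwise on $X$, the three ratios $\|s\|_H^2/\|s\|_{H^\dag}^2$, $|s|_{k\FS^{(k)}(H^\dag)}^2/|s|_{k\FS^{(k)}(H)}^2$ and $\MA(\FS^{(k)}(H^\dag))/\MA(\FS^{(k)}(H))$ by $e^{c'M}$ for a uniform $c'$.

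The first two ratios are elementary. Diagonalising $H^\dag$ in an $H$-ONB gives immediately $e^{-M}\le\|s\|_H^2/\|s\|_{H^\dag}^2\le e^{M}$. For the second, since $(e^{-\mu_\gamma/2}s_\gamma)$ is an $H^\dag$-ONB we have $e^{k\FS^{(k)}(H^\dag)}=\tfrac1{D^{(k)}}\sum_\gamma e^{-\mu_\gamma}|s_\gamma|_{\hat h^{\otimes k}}^2$ against $e^{k\FS^{(k)}(H)}=\tfrac1{D^{(k)}}\sum_\gamma|s_\gamma|_{\hat h^{\otimes k}}^2$, whence $\|\FS^{(k)}(H^\dag)-\FS^{(k)}(H)\|_{C^0}\le M/k$ and therefore $e^{-M}\le|s|_{k\FS^{(k)}(H^\dag)}^2/|s|_{k\FS^{(k)}(H)}^2\le e^{M}$.

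The substantive point, and the step I expect to be the main obstacle, is the pointwise comparison of the Monge--Amp\`ere measures, which is \emph{not} controlled by the $C^0$-distance of the potentials. For this I would move along the Bergman geodesic $\{H(t)\}_{t\in[0,1]}$ joining $H$ to $H^\dag$, generated by a Hermitian $A$ whose eigenvalues $\mu_\alpha$ (hence $\|A\|_{\op}=M$) are constant in $t$. Along it $\ddt\FS^{(k)}(H(t))=\tfrac1k h_{H(t)}(A)$ as computed in the proof of Proposition \ref{qdf}, and by the standard linearisation of the Monge--Amp\`ere operator $\ddt\log\MA(\FS^{(k)}(H(t)))=\tfrac1k\Delta_{\FS^{(k)}(H(t))}h_{H(t)}(A)$. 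Now $h_{H(t)}(A)=\Tr(A\circ M(H(t)))$ is the Hamiltonian on $\P^\ast(H^0(X,L^{\otimes k}))$ of a vector field whose Fubini--Study potential is, up to an additive constant, a convex combination of the eigenvalues of $A$; restricting to the linear $\C\P^1$ joining two coordinate points reduces the estimate of its complex Hessian to a one-variable identity and yields $|\dd\, h_{H(t)}(A)|_{\aFS(H(t))}\le C\|A\|_{\op}=CM$ on the ambient projective space, with $C$ absolute (in particular independent of $k$ and of $\dim H^0(X,L^{\otimes k})$). Since restricting a real $(1,1)$-form to $X$ cannot increase its operator norm relative to the restricted K\"ahler form, and $\omega_{\FS^{(k)}(H(t))}=k^{-1}\omega_{\aFS(H(t))}|_X$, we get $\big|\tfrac1k\Delta_{\FS^{(k)}(H(t))}h_{H(t)}(A)\big|\le nCM$ pointwise and uniformly in $t$; integrating over $t\in[0,1]$ gives $e^{-nCM}\le\MA(\FS^{(k)}(H^\dag))/\MA(\FS^{(k)}(H))\le e^{nCM}$ pointwise.

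Finally, multiplying the three pointwise bounds inside the integrals defining $\|s\|_T^2$ and $\|s\|_{T^\dag}^2$ gives $\|s\|_{T^\dag}^2\le e^{(1+nC)M}\|s\|_T^2$ for every $s$; combined with $\|s\|_H^2\le e^M\|s\|_{H^\dag}^2$ this yields
\[
\big\|\overline{M}_{\MA(\FS^{(k)}(H^\dag))}(H^\dag)\big\|_{\op}\le e^{(2+nC)M}\,\big\|\overline{M}_{\MA(\FS^{(k)}(H))}(H)\big\|_{\op},
\]
and substituting $M\le Ck^{\frac n2+1}\dist^{(k)}(H,H^\dag)$ proves the lemma with $c:=(2+nC)C$ (for the finitely many values of $k$ for which $D^{(k)}=O(k^n)$ is not yet effective one simply enlarges $c$).
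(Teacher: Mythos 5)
The paper gives no argument for this lemma at all: it is imported verbatim from Fine's Proposition 24 in \cite{Fin10}, so the only question is whether your self-contained reconstruction is sound, and it essentially is --- and it runs along the same mechanism as Fine's proof. Your reduction of the two operator norms to Rayleigh quotients, $\|\overline{M}_\nu(H)\|_{\op}=\frac{1}{D^{(k)}}\sup_{s\neq 0}\|s\|_{\Hilb_\nu^{(k)}(\FS^{(k)}(H))}^2/\|s\|_H^2$, is exactly the Gram-matrix identity already used in the proof of Proposition \ref{qdf}; the two elementary $C^0$ comparisons (of $\|\cdot\|_H$ with $\|\cdot\|_{H^\dag}$ and of the Fubini--Study potentials, both within $e^{\pm M}$) are correct; and the conversion $M=\|A\|_{\op}\le k\sqrt{D^{(k)}}\,\dist^{(k)}(H,H^\dag)\le Ck^{\frac n2+1}\dist^{(k)}(H,H^\dag)$ via Riemann--Roch is precisely where the exponent $\frac n2+1$ in the statement comes from.

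The one step you should tighten is the ambient bound $\pm\sqrt{-1}\p\bp\, h_{H(t)}(A)\le CM\,\omega_{\aFS(H(t))}$: restricting to the lines ``joining two coordinate points'' does not control the complex Hessian at an arbitrary point in an arbitrary direction. The repair is immediate and keeps the constant dimension-free: through any point and tangent direction of $\P^\ast(H^0(X,L^{\otimes k}))$ pass a projective line $\ell$; in an $H(t)$-orthonormal frame of the corresponding $2$-plane the compression of $A$ is a $2\times 2$ Hermitian matrix with spectrum in $[\min_\alpha\mu_\alpha,\max_\alpha\mu_\alpha]$, the restriction $\omega_{\aFS(H(t))}|_\ell$ is the standard Fubini--Study form of $\ell\simeq\C\P^1$, and your one-variable computation then gives $\pm\sqrt{-1}\p\bp\,h_{H(t)}(A)\le 2M\,\omega_{\aFS(H(t))}$ in every direction; restriction of semipositive $(1,1)$-forms to $X$ then yields $|\frac1k\Delta_{\FS^{(k)}(H(t))}h_{H(t)}(A)|\le 2nM$ as you want (note also that differentiating $\log\MA(\FS^{(k)}(H(t)))$ tacitly assumes $\omega_{\FS^{(k)}(H(t))}$ is nondegenerate, i.e.\ $k$ is in the very ample range in which the lemma is actually used). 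Alternatively, the Monge--Amp\`ere comparison follows without any flow from the standard two-sided bound $e^{-2M}\omega_{\aFS(H)}\le\omega_{\aFS(H^\dag)}\le e^{2M}\omega_{\aFS(H)}$, which is proved by the same compression-to-a-line trick applied to the potential difference; that shortcut is closer in spirit to the original argument and shaves the bookkeeping, but your version, once the line argument is stated for arbitrary lines, is a complete proof with the claimed uniform constant.
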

We also use the following lemma obtained in \cite[Section 5]{PS04}:
\begin{lem} \label{projest}
Assume that $G$ is trivial and for $H \in \cH^{(k)}$, $\FS^{(k)}(H)$ has $R$-bounded geometry in $C^r$ ($r \geq 4$). Then for any $A \in \sqrt{-1} \fu_H^{(k)}$, we have the following inequalities:
\begin{equation} \label{projo}
k \|A\|_{H}^2 \leq c \| \xi_A \|_{L^2(\MA({\FS^{(k)}(H)}))}^2,
\end{equation}
\begin{equation} \label{projs}
\| \xi_A \|_{L^2(\MA({\FS^{(k)}(H)}))}^2=\| \xi_A^\top \|_{L^2(\MA({\FS^{(k)}(H)}))}^2+\| \xi_A^\bot \|_{L^2(\MA({\FS^{(k)}(H)}))}^2,
\end{equation}
\begin{equation} \label{projt}
c' \| \xi_A^\top \|_{L^2(\MA({\FS^{(k)}(H)}))}^2 \leq k \| \xi_A^\bot \|_{L^2(\MA({\FS^{(k)}(H)}))}^2,
\end{equation}
where $c, c'>0$ are constants which depend only on $R$, and $\| \cdot \|_{L^2(\MA({\FS^{(k)}(H)}))}^2$ denotes the $L^2$ norm with respect to the probability measure $\MA({\FS^{(k)}(H)})$ on the base and the Fubini-Study metric $| \cdot |_{\aFS(H)}^2$ on the fiber. Also the sections $\xi_{A_{[i]}}^\top$ and $\xi_{A_{[i]}}^\bot$ are the components of the pointwise decomposition defined in \eqref{pwdcomp}.
\end{lem}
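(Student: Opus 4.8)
Since Lemma \ref{projest} is the estimate established in \cite[Section 5]{PS04}, my plan is to recall the structure of that argument, emphasising the point at which the hypothesis of $R$-bounded geometry enters to make the constants independent of $k$. Fix an $H$-orthonormal basis $(s_\a)$ of $H^0(X,L^{\otimes k})$, identify $\xi_A$ with the holomorphic gradient vector field on $\P^\ast(H^0(X,L^{\otimes k}))$ generated by the Hermitian matrix $A$, and recall from the computation preceding \eqref{pwdcomp} that over $X$ one has $|\xi_A^\top|_{\FS^{(k)}(H)}^2 = k^{-2}|\p h_A|_{\FS^{(k)}(H)}^2$, where $h_A$ is the Fubini--Study Hamiltonian $\Tr(A\circ M(H))$. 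The identity \eqref{projs} is immediate: it is the pointwise orthogonal decomposition of $\xi_A|_X$ into its $\omega_{\aFS(H)}$-tangential and $\omega_{\aFS(H)}$-normal components. For \eqref{projo} one first discards the scalar part of $A$, which generates the trivial flow on projective space, and then rewrites both sides through the moment map $M(H)$ and the Bergman function of the embedding; the inequality reduces to the uniform comparison $B^{(k)}_{\MA(\FS^{(k)}(H))}(\FS^{(k)}(H)) \asymp k^n$ with $k$-independent constants, which is Theorem \ref{asexp} applied to $\{\FS^{(k)}(H)\}$: under $R$-bounded geometry this family has uniform $C^r$-bounds (cf. Remark \ref{unifk}), which is enough for the finite-order expansion of Theorem \ref{asexp} to hold with constants depending only on $R$.

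The substantive estimate is \eqref{projt}. The key observation is that $\xi_A$ is holomorphic on the ambient projective space, so $\bp\xi_A=0$ there; restricting to $X$ and decomposing as in \eqref{pwdcomp}, one finds that $\bp_X\xi_A^\top$ equals the shape operator of the Kodaira embedding $X\hookrightarrow\P^\ast(H^0(X,L^{\otimes k}))$ contracted with $\xi_A^\bot$, so pointwise $|\bp_X\xi_A^\top|_{\aFS(H)}\le |\mathrm{II}|_{\aFS(H)}\,|\xi_A^\bot|_{\aFS(H)}$. Under $R$-bounded geometry the $k$-rescaled embedding has geometry bounded independently of $k$, and the second fundamental form of the $k$-th Kodaira embedding is bounded by $C(R)\,k^{1/2}$ with respect to $\omega_{\aFS(H)}$; hence $\|\bp_X\xi_A^\top\|_{L^2}^2 \le c\,k\,\|\xi_A^\bot\|_{L^2}^2$ for $c=c(R)$. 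Since $G=\Aut_0(X)$ is trivial, $X$ carries no nonzero holomorphic vector field, so $\bp_X$ acting on sections of $T^{1,0}X$ is injective with closed range and therefore admits a Poincar\'e-type estimate $\|\xi_A^\top\|_{L^2}^2 \le C_0\,\|\bp_X\xi_A^\top\|_{L^2}^2$. Combining the two inequalities gives \eqref{projt} with $c'=(C_0 c)^{-1}$.

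The main difficulty — and the reason the hypotheses are phrased as they are — is the uniformity in $k$ of the constants $c,c'$ of the statement. The comparison of the Bergman kernel with $k^n$ (needed for \eqref{projo}) and the bound on the second fundamental form (needed for \eqref{projt}) both come from Theorem \ref{asexp} once $\{\FS^{(k)}(H)\}$ is known to lie in a family with uniform $C^r$-bounds, which is precisely what $R$-bounded geometry in $C^r$ provides via Remark \ref{unifk}. The uniformity of the Poincar\'e constant $C_0$ is the one point where triviality of $G$ is genuinely used: it amounts to a uniform positive lower bound for the first eigenvalue of the $\bp$-Laplacian on $T^{1,0}X$ over $(X,\omega_{\FS^{(k)}(H)})$, which follows from the uniform $C^r$-control of $\omega_{\FS^{(k)}(H)}$ together with the absence of holomorphic vector fields. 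The remaining steps are the routine but lengthy bookkeeping carried out in \cite[Section 5]{PS04}; see also \cite{Don01, Fin10} for closely related computations.
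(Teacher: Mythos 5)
The paper does not supply its own proof of this lemma; it is imported wholesale from Phong--Sturm \cite[Section 5]{PS04}, so there is no internal argument to compare your attempt against. What can be checked is whether your sketch is internally sound and matches the shape of the Phong--Sturm argument.

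The overall architecture you describe — \eqref{projs} is the pointwise orthogonal decomposition, \eqref{projo} runs through the moment map and a Bergman-kernel comparison with constants made uniform by $R$-bounded geometry, and \eqref{projt} combines a second-fundamental-form estimate for $\bp_X\xi_A^\top=-\mathrm{II}^*(\xi_A^\bot)$ with a Poincar\'e inequality that is nondegenerate because $G$ is trivial — is in the right direction and captures the role of each hypothesis. Discarding the scalar part of $A$ in \eqref{projo} is also the right instinct: as stated the inequality fails for $A=\lambda\Id$, and one must observe that where the lemma is applied ($A=\dot{\bH}$ along the balancing flow) the trace vanishes.

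The quantitative bookkeeping in the \eqref{projt} step, however, is not right, and the fact that it nonetheless lands on the correct answer is a coincidence of two compensating slips. First, the claim that $|\mathrm{II}|_{\aFS(H)}\leq C(R)\,k^{1/2}$ is false in the stated normalization: for the $k$-th Kodaira embedding under $R$-bounded geometry the Gauss equation gives $|\mathrm{II}|_{\aFS(H)}=O(1)$ (check it on the Veronese curve $\P^1\hookrightarrow\P^k$: holomorphic sectional curvature $2/k$ on the curve, $2$ in $\P^k$, forcing $|\mathrm{II}|^2_{\aFS}=1-1/k$). So the pointwise bound is $|\bp_X\xi_A^\top|_{\aFS}\leq C(R)\,|\xi_A^\bot|_{\aFS}$ with no power of $k$. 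Second, the Poincar\'e constant is not $O(1)$ in the norms used in the lemma. Uniform coercivity of $\bp^*\bp$ on $T^{1,0}X$ holds against the bounded metric $\omega_{\FS^{(k)}(H)}$, and since on $X$ one has $\omega_{\aFS(H)}=k\,\omega_{\FS^{(k)}(H)}$ while $\MA(\FS^{(k)}(H))$ is a fixed probability measure, the vector-field $L^2$-norm picks up a factor of $k$ under this rescaling while $\|\bp_X\xi^\top\|_{L^2}$ is scale-invariant; hence the Poincar\'e constant $C_0$ in the $\aFS(H)$-normalization is $O(k)$, not $O(1)$. The single factor of $k$ in \eqref{projt} therefore originates in the Poincar\'e step, not in the second fundamental form. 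Your two misattributions cancel, but as written the argument is not a valid derivation: you would need to state $|\mathrm{II}|_{\aFS(H)}=O(1)$ and then track the $O(k)$ in the Poincar\'e constant explicitly (or equivalently work throughout with $\omega_{\FS^{(k)}(H)}$-norms on $T^{1,0}X$ and be careful about the change of fiber metric). The "the $k$-rescaled embedding has geometry bounded independently of $k$" claim should also be softened: the \emph{metric} on $X$ is controlled, but the embedding itself does not have uniformly bounded second fundamental form in the rescaled picture, where $|\mathrm{II}|^2=O(k)$.

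The sketch of \eqref{projo} is plausible but stays too vague to verify — "rewrites both sides through the moment map and the Bergman function" hides essentially all of the work — and should at minimum indicate how the $X$-integral of $|\xi_A|^2_{\aFS}$ against $\MA$ is related to $\Tr(A^2)$ rather than simply asserting a reduction to $B^{(k)}\asymp k^n$.
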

%==============Subsection 5.3==========================
\subsection{The balancing flow}
We consider the gradient flow of $\cD^{(k)}$ referred as the {\it balancing flow}:
\begin{equation} \label{balflowr}
\begin{cases}
\ddt H_{[1]}(t)=k(\Id-D_{[1]}^{(k)} \overline{M}_{[1], \bmu}(\bH(t))) \\
\hspace{20mm} \vdots \\
\ddt H_{[N]}(t)=k(\Id-D_{[N]}^{(k)} \overline{M}_{[N], \bmu}(\bH(t))).
\end{cases}
\end{equation}
This is an ODE on a finite dimensional symmetric space and short time existence follows from the standard ODE theory. Set
\[
\cR^{(k)}(\bH):=k^2 \big\| \big(\Id-D_{[i]}^{(k)} \overline{M}_{[i], \bmu}(\bH) \big) \big\|_{\bH}^2.
\]
In order to find the fixed point of the flow, we need the following lemma:
\begin{lem} \label{regularityofbf}
Assume that there exist constants $C, \d>0$ such that the balancing flow $\bH(t)$ satisfies the following conditions:
\begin{enumerate}
\item $\cR^{(k)}(\bH(0)) \leq \frac{1}{16} \d^2 C^2$.
\item We have
\[
\ddt \cR^{(k)}(\bH(t)) \leq -\d \cR^{(k)}(\bH(t))
\]
as long as $\bH(t) \in B_{\bH(0)}(C)$.
\end{enumerate}
(Here the constants $C, \d$ are allowed to depend on $k$.) Then the balancing flow starting from $\bH(0)$ exists for all time and converges to a balanced metric $\bH^{(k)}$ which lies in $B_{\bH(0)}(2\cR^{(k)}(\bH(0))^{1/2}/\d)$.
\end{lem}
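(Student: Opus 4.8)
The plan is to run the standard trapping argument for a descending gradient flow, with hypotheses (1)--(2) arranged precisely so that the trajectory never escapes the ball $B_{\bH(0)}(C)$. First I would record that, by the variational formula in Proposition \ref{qdf}(1), the balancing flow \eqref{balflowr} is the gradient flow of $-\cD^{(k)}$ for the Riemannian structure $\langle\cdot,\cdot\rangle_{\bH}=\sum_i\langle\cdot,\cdot\rangle_{H_{[i]}}$; in particular its speed is
\[
\Big\|\tfrac{d}{dt}\bH(t)\Big\|_{\bH(t)}=\cR^{(k)}(\bH(t))^{1/2}.
\]
Let $[0,T)$ be the maximal interval of existence supplied by ODE theory, and let $J\subset[0,T)$ be the set of $t$ with $\bH(s)\in B_{\bH(0)}(C)$ for all $s\in[0,t]$; this is a subinterval containing $0$ on which hypothesis (2) is available.

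On $J$, hypothesis (2) together with Gr\"onwall's inequality gives $\cR^{(k)}(\bH(t))\le\cR^{(k)}(\bH(0))\,e^{-\d t}$. Integrating the speed, the length of $\bH|_{[0,t]}$ is therefore at most
\[
\int_0^{t}\cR^{(k)}(\bH(0))^{1/2}e^{-\d s/2}\,ds\le\frac{2\,\cR^{(k)}(\bH(0))^{1/2}}{\d}\le\frac{C}{2},
\]
the last step being exactly hypothesis (1), which reads $\cR^{(k)}(\bH(0))^{1/2}\le\tfrac14\d C$. Thus on $J$ one has $\dist^{(k)}(\bH(0),\bH(t))\le C/2$, so the trajectory stays well inside $B_{\bH(0)}(C)$ and cannot reach its boundary; a routine open--closed argument then forces $J=[0,T)$. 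Since $\prod_i\cH_{[i]}^{(k)}$ is a complete Riemannian manifold (a product of symmetric spaces), the closed ball $\overline{B_{\bH(0)}(C/2)}$ is compact by Hopf--Rinow, and a flow line confined to a compact set cannot blow up in finite time; hence $T=\infty$.

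Finally, the same length bound gives $\int_0^{\infty}\|\tfrac{d}{dt}\bH(t)\|_{\bH(t)}\,dt<\infty$, so $\{\bH(t)\}$ is a Cauchy path and converges to a limit $\bH^{(k)}$ with $\dist^{(k)}(\bH(0),\bH^{(k)})\le 2\,\cR^{(k)}(\bH(0))^{1/2}/\d$, i.e. $\bH^{(k)}\in B_{\bH(0)}(2\,\cR^{(k)}(\bH(0))^{1/2}/\d)$. Letting $t\to\infty$ in $\cR^{(k)}(\bH(t))\le\cR^{(k)}(\bH(0))e^{-\d t}$ and using continuity of $\cR^{(k)}$, we get $\cR^{(k)}(\bH^{(k)})=0$, that is $\overline{M}_{[i],\bmu}(\bH^{(k)})=\Id/D_{[i]}^{(k)}$ for every $i$; by Proposition \ref{qdf}(1) this means $\bH^{(k)}$ is a critical point of $\cD^{(k)}$, hence balanced.

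I do not expect a genuine obstacle: the statement is essentially a repackaging of a classical trapping lemma, and the numerical constants in (1)--(2) are calibrated precisely so that the length estimate $2\,\cR^{(k)}(\bH(0))^{1/2}/\d\le C/2$ keeps the flow off the boundary of $B_{\bH(0)}(C)$. The only points that need a little care are the open--closed verification that $J=[0,T)$ and the appeal to completeness of $\prod_i\cH_{[i]}^{(k)}$ to upgrade boundedness to global existence; both are standard, and everything else is Gr\"onwall plus an elementary integration.
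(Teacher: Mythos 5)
Your proof is correct and follows essentially the same trapping strategy as the paper: Gr\"onwall gives $\cR^{(k)}(\bH(t))\le e^{-\delta t}\cR^{(k)}(\bH(0))$, integrating the speed $\|\dot{\bH}\|_{\bH}=\cR^{(k)}(\bH)^{1/2}$ gives the same length bound $2\cR^{(k)}(\bH(0))^{1/2}/\delta\le C/2$, and this calibration keeps the trajectory off $\partial B_{\bH(0)}(C)$. The two places you deviate in detail are: (i) for long-time existence the paper multiplies the right-hand side of the ODE by a cut-off supported near $B_{\bH(0)}(2C)$ and invokes global existence for the modified flow, whereas you invoke completeness of the symmetric space $\prod_i\cH_{[i]}^{(k)}$ together with Hopf--Rinow to say the flow is trapped in a compact set and so cannot blow up; and (ii) for convergence the paper first extracts a subsequential limit and then upgrades to full convergence via uniqueness of balanced metrics (Proposition \ref{qdf}(3), valid since $G$ is trivial), whereas you observe directly that the flow line has finite total length and is hence a Cauchy path. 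Both substitutions are standard and valid; your version of step (ii) is slightly cleaner since it does not appeal to the uniqueness result at all.
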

\begin{proof}
Let $T<\infty$ be the maximum time such that the flow $\bH(t)$ exists and $\bH(t) \in B_{\bH(0)}(C)$ for all $t \in [0,T)$. Then we have
\[
\ddt \cR^{(k)}(\bH(t)) \leq -\d \cR^{(k)}(\bH(t)), \quad t \in [0,T].
\]
Thus we have
\begin{equation} \label{edr}
\cR^{(k)}(\bH(t)) \leq e^{-\d t} \cR^{(k)}(\bH(0)), \quad t \in [0,T].
\end{equation}
Integrating on $[0,T]$, we observe that
\[
\dist^{(k)}(\bH(0), \bH(T)) \leq \int_0^T \cR^{(k)}(\bH(t))^{\frac{1}{2}} dt \leq \frac{2}{\d} \cR^{(k)}(\bH(0))^{\frac{1}{2}} \leq \frac{C}{2}.
\]
As argued in \cite[Section 5.3]{Has15}), we take a cut-off function which is equal to $1$ on $B_{\bH(0)}(C)$ and supported on $B_{\bH(0)}(2C)$. Then multiplying the RHS of \eqref{balflowr} does not change the flow as long as it stays in $B_{\bH(0)}(C)$. So by the completeness, there exists $T^\ast>T$ such that the flow $\bH(t)$ exists and $\bH(t) \in B_{\bH(0)}(C)$ for $t \in [0, T^\ast)$, which contradicts the assumption for $T$. Eventually we have $T=\infty$, and by passing to a subsequence the balancing flow $\bH(t)$ converges to some element $\bH^{(k)} \in B_{\bH(0)}(2\cR^{(k)}(\bH(0))^{1/2}/\d)$. Moreover, by  letting $t \to \infty$ in \eqref{edr}, we have $\cR^{(k)}(\bH^{(k)})=0$, which shows that $\bH^{(k)}$ is balanced. Since $G$ is trivial                                      , balanced metrics are unique by Proposition \ref{qdf} (3). Thus we conclude that the flow $\{\bH(t)\}$ converges without taking subsequences.
\end{proof}
%==============Subsection 5.4==========================
\subsection{Proof of Theorem \ref{crconv}} \label{prfscb}
In what follows, we concentrate on the balancing flow $\bH=\bH(t)$ starting from the almost balanced metric $\bH(0):=\bHilb_{\bmu}^{(k)}(\bphi_\ell)$.
\begin{lem} \label{estop}
We have
\[
\| \overline{M}_{[i], \bMA}(\bH(0))\|_{\op}=O(k^{-n}).
\]
\end{lem}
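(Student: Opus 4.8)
The plan is to identify $\overline{M}_{[i],\bMA}(\bH(0))$, up to the scalar factor $1/D_{[i]}^{(k)}$, with the Gram matrix of an $H_{[i]}(0)$-orthonormal basis taken with respect to a second $L^2$-Hermitian form, so that its operator norm is governed by the comparison constant between those two forms; that comparison constant is in turn controlled pointwise by the Bergman function $\bar B_{[i],\bmu}^{(k)}(\bphi_\ell)$ and by the Radon--Nikodym density of $\MA$ against $\mu(\bphi_\ell)$, both of which stay $O(1)$ uniformly in $k$ at the almost balanced metric $\bphi_\ell$.

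To set this up, put $H_{[i]}(0)=\Hilb_{[i],\bmu}^{(k)}(\bphi_\ell)$, fix an $H_{[i]}(0)$-orthonormal basis $(s_{[i],\a})_{\a=1,\dots,D_{[i]}^{(k)}}$, and write $\psi_{[i]}:=\FS_{[i]}^{(k)}(H_{[i]}(0))=S_{[i],\bmu}^{(k)}(\bphi_\ell)$. By the definition of $\FS_{[i]}^{(k)}$ one has $\sum_{\g}|s_{[i],\g}|_{k\psi_{[i]}}^2\equiv D_{[i]}^{(k)}$ on $X$, so writing the moment map $M(H_{[i]}(0))$ in the fibre metric $|\cdot|_{k\psi_{[i]}}$ gives, exactly as in the proof of Proposition \ref{qdf}, the matrix identity
\[
\bigl(\overline{M}_{[i],\bMA}(\bH(0))\bigr)_{\a\b}=\frac{1}{D_{[i]}^{(k)}}\int_X (s_{[i],\a},s_{[i],\b})_{k\psi_{[i]}}\,\MA(\psi_{[i]})=\frac{1}{D_{[i]}^{(k)}}\,(s_{[i],\a},s_{[i],\b})_{T_{[i],\bMA}^{(k)}(\bH(0))},
\]
where $T_{[i],\bMA}^{(k)}(\bH(0))=\Hilb_{[i],\MA(\psi_{[i]})}^{(k)}(\psi_{[i]})$ by definition. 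Since $\overline{M}_{[i],\bMA}(\bH(0))$ is positive semidefinite (it is the integral of the pointwise positive semidefinite matrices $M(H_{[i]}(0))$) and $(s_{[i],\a})$ is $H_{[i]}(0)$-orthonormal, its operator norm equals
\[
\frac{1}{D_{[i]}^{(k)}}\sup_{0\neq s}\frac{\|s\|_{T_{[i],\bMA}^{(k)}(\bH(0))}^2}{\|s\|_{H_{[i]}(0)}^2}=\frac{1}{D_{[i]}^{(k)}}\sup_{0\neq s}\frac{\int_X |s|_{k\psi_{[i]}}^2\,\MA(\psi_{[i]})}{\int_X |s|_{k\phi_{[i],\ell}}^2\,\mu(\bphi_\ell)},
\]
the supremum running over nonzero $s\in H^0(X,L_{[i]}^{\otimes k})$.

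It then remains to bound this supremum by a constant independent of $k$. On the fibre side, $\psi_{[i]}-\phi_{[i],\ell}=\frac1k\log\bar B_{[i],\bmu}^{(k)}(\bphi_\ell)$, hence $|s|_{k\psi_{[i]}}^2=\bar B_{[i],\bmu}^{(k)}(\bphi_\ell)^{-1}\,|s|_{k\phi_{[i],\ell}}^2$ pointwise. On the measure side, for each fixed $\ell$ the potentials $\bphi_\ell$ stay in a compact subset of $\prod_i\cH_{[i]}$ in the $C^\infty$-topology as $k\to\infty$, and $\psi_{[i]}\to\phi_{\CKE,[i]}$ in $C^\infty$; since $\MA(\phi_{\CKE,[i]})=\mu(\bphi_{\CKE})$ by the coupled K\"ahler--Einstein equation, continuity of $\MA$ and $\mu$ yields $\MA(\psi_{[i]})/\mu(\bphi_\ell)=1+o(1)$ uniformly on $X$. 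Combining,
\[
\int_X |s|_{k\psi_{[i]}}^2\,\MA(\psi_{[i]})=\int_X\Bigl(\bar B_{[i],\bmu}^{(k)}(\bphi_\ell)^{-1}\,\frac{\MA(\psi_{[i]})}{\mu(\bphi_\ell)}\Bigr)|s|_{k\phi_{[i],\ell}}^2\,\mu(\bphi_\ell)\leq\Bigl\|\,\bar B_{[i],\bmu}^{(k)}(\bphi_\ell)^{-1}\frac{\MA(\psi_{[i]})}{\mu(\bphi_\ell)}\,\Bigr\|_{C^0(X)}\|s\|_{H_{[i]}(0)}^2 ,
\]
and the $C^0$-norm on the right is $1+o(1)$ (already the leading term of Theorem \ref{asexp} bounds $\bar B_{[i],\bmu}^{(k)}(\bphi_\ell)$ away from $0$ and $\infty$ on $X$ for large $k$; Theorem \ref{happroximation} sharpens this to $\bar B_{[i],\bmu}^{(k)}(\bphi_\ell)=1+O(k^{-\ell-1})$). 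Therefore $\|\overline{M}_{[i],\bMA}(\bH(0))\|_{\op}=O\bigl(1/D_{[i]}^{(k)}\bigr)=O(k^{-n})$, using $D_{[i]}^{(k)}\geq c\,k^n$ for a fixed $c>0$ and all large $k$.

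I do not anticipate a genuine obstacle here; the only point requiring attention is the uniformity in $k$ of all the error terms — the Bergman expansion of $\bar B_{[i],\bmu}^{(k)}(\bphi_\ell)$ and the comparison of $\MA(\psi_{[i]})$ with $\mu(\bphi_\ell)$ — which is guaranteed because for each fixed $\ell$ the family $\{\bphi_\ell\}_k$ lies in a compact set of potentials whose forms are uniformly bounded below. The same computation in fact yields the finer estimate $\|D_{[i]}^{(k)}\overline{M}_{[i],\bMA}(\bH(0))-\Id\|_{\op}=O(k^{-1})$, but only the bound $O(k^{-n})$ is needed in the sequel.
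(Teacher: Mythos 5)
Your argument is correct and, after unwinding the notation, is essentially the same as the paper's: both identify $D_{[i]}^{(k)}\overline{M}_{[i],\bMA}(\bH(0))$ with a Gram matrix (equivalently, a Toeplitz-type operator) whose deviation from $\Id$ is governed by the pointwise density $\bar B_{[i],\bmu}^{(k)}(\bphi_\ell)^{-1}\,\MA(\psi_{[i]})/\mu(\bphi_\ell)=1+O(k^{-1})$, and then divide by $D_{[i]}^{(k)}\sim a_{[i],0}k^n$. The only cosmetic difference is that the paper converts the $C^0$ density bound into an operator-norm bound via the factorization $p_{[i]}\circ m_{F^{(k)}}\circ\iota_{[i]}$, whereas you use the Rayleigh-quotient characterization of the operator norm of the positive matrix; both deliver the same finer estimate $\|D_{[i]}^{(k)}\overline{M}_{[i],\bMA}(\bH(0))-\Id\|_{\op}=O(k^{-1})$.
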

\begin{proof}
Let $(s_{[i],\a})_{\a=1,\ldots, D_{[i]}^{(k)}}$ denotes a $\bH(0)$-ONB. Then we can compute the matrix representation of $D_{[i]}^{(k)} \overline{M}_{[i],\bMA}(\bH(0))$ as
\begin{eqnarray*}
\big( D_{[i]}^{(k)} \overline{M}_{[i], \bMA}(\bH(0)) \big)_{\a \b}&=& \int_X \frac{(s_{[i],\a}, s_{[i],\b})_{k \phi_{[i], \ell}}}{\bar{B}_{[i], \bmu}^{(k)}(\bphi_\ell)} \MA(\FS_{[i]}^{(k)}(H_{[i]}(0)))\\
&=& \int_X \frac{(s_{[i],\a}, s_{[i],\b})_{k \phi_{[i], \ell}}}{\bar{B}_{[i], \bmu}^{(k)}(\bphi_\ell)}(1+O(k^{-\ell-2})) \MA(\phi_{[i],\ell}) \\
&=& \int_X \frac{(s_{[i],\a}, s_{[i],\b})_{k \phi_{[i], \ell}}}{1+O(k^{-1})}(1+O(k^{-\ell-2})) \mu(\bphi_\ell) \\
&=& \int_X (s_{[i],\a}, s_{[i],\b})_{k \phi_{[i], \ell}}(1+O(k^{-1})) \mu(\bphi_\ell),
\end{eqnarray*}
where we used the uniform asymptotic expansion \eqref{asebk} and the fact that $\omega_{\FS_{[i]}^{(k)}(H_{[i]}(0))}=\omega_{\phi_{[i], \ell}}+O(k^{-\ell-2})$. Thus we have
\[
\big( D_{[i]}^{(k)} \overline{M}_{[i], \bMA}(\bH(0))-\Id \big)_{\a \b}= \int_X (s_{[i],\a}, s_{[i],\b})_{k \phi_{[i], \ell}}F^{(k)} \mu(\bphi_\ell)
\]
for some function $F^{(k)} \in C^{\infty}(X;\R)$ with $F^{(k)}=O(k^{-1})$. On the other hand, we can decompose $D_{[i]}^{(k)} \overline{M}_{[i], \bMA}(\bH(0))-\Id$ as
\[
D_{[i]}^{(k)} \overline{M}_{[i], \bMA}(\bH(0))-\Id=p_{[i]} \circ m_{F^{(k)}} \circ \iota_{[i]},
\]
where $\iota_{[i]} \colon H^0(X,L_{[i]}^{\otimes k}) \longrightarrow L^2 (X,L_{[i]}^{\otimes k})$ is the inclusion into the space of all $L^2$-integrable sections, $m_{F^{(k)}}$ is multiplication by $F^{(k)}$, $p_{[i]} \colon L^2(X,L_{[i]}^{\otimes k}) \longrightarrow H^0(X,L_{[i]}^{\otimes k})$ is the $L^2$-orthogonal projection, and all of these $L^2$ items are defined with respect to the $L^2$-inner product $\int_X (\cdot, \cdot)_{k \phi_{[i],\ell}} \mu(\bphi_\ell)$. Hence we obtain
\[
\|D_{[i]}^{(k)} \overline{M}_{[i], \bMA}(\bH(0))-\Id \|_{\rm op} \leq \|F^{(k)}\|_{C^0}=O(k^{-1}).
\]
Using the fact that $D_{[i]}^{(k)}=O(k^n)$, we obtain the desired result.
\end{proof}
\begin{lem}
We have
\[
\cR^{(k)}(\bH(0))=O(k^{-2\ell-4}).
\]
\end{lem}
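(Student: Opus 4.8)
The plan is to run the argument of Lemma~\ref{estop} essentially verbatim, the only change being that the reference probability measure $\MA(\FS_{[i]}^{(k)}(H_{[i]}(0)))$ is replaced by $\mu(\bFS^{(k)}(\bH(0)))$, and then to convert the resulting matrix identity into a bound on $\cR^{(k)}$ by means of the reproducing property of the Bergman kernel.

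First I would fix, for each $i$, a $\bH(0)$-orthonormal basis $(s_{[i],\alpha})_{\alpha=1,\ldots,D_{[i]}^{(k)}}$ of $H^0(X,L_{[i]}^{\otimes k})$. Using $\FS_{[i]}^{(k)}(H_{[i]}(0))-\phi_{[i],\ell}=\tfrac1k\log\bar B_{[i],\bmu}^{(k)}(\bphi_\ell)$, the same computation as in Lemma~\ref{estop} gives
\[
\big(\Id-D_{[i]}^{(k)}\overline{M}_{[i],\bmu}(\bH(0))\big)_{\alpha\beta}
=\int_X (s_{[i],\alpha},s_{[i],\beta})_{k\phi_{[i],\ell}}\,G_{[i]}^{(k)}\,\mu(\bphi_\ell),
\qquad
G_{[i]}^{(k)}:=1-\frac{\mu(\bFS^{(k)}(\bH(0)))}{\bar B_{[i],\bmu}^{(k)}(\bphi_\ell)\,\mu(\bphi_\ell)}.
\]
The crux is then to estimate $\|G_{[i]}^{(k)}\|_{C^0}$. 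By Theorem~\ref{asexp} the almost-balanced identity \eqref{hberg} holds in every $C^r$-norm, whence $\FS_{[j]}^{(k)}(H_{[j]}(0))=\phi_{[j],\ell}+O_{C^r}(k^{-\ell-2})$; since $\mu(\bpsi)=e^{-\lambda\sum_j\psi_{[j]}}\theta_0^n\big/\!\int_X e^{-\lambda\sum_j\psi_{[j]}}\theta_0^n$ depends smoothly on $\bpsi$ and $\bphi_\ell$ stays in a fixed compact subset of $\prod_i\cH_{[i]}$ (it is a finite perturbation of $\bphi_{\CKE}$), this forces $\mu(\bFS^{(k)}(\bH(0)))/\mu(\bphi_\ell)=1+O_{C^r}(k^{-\ell-2})$; combined with \eqref{hberg} one obtains $\|G_{[i]}^{(k)}\|_{C^0}=O(k^{-\ell-2})$.

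To finish, let $P_{[i]}(x,y)$ be the Bergman kernel of $H^0(X,L_{[i]}^{\otimes k})$ for the inner product $\int_X(\cdot,\cdot)_{k\phi_{[i],\ell}}\,\mu(\bphi_\ell)$. Squaring and summing the identity above (the matrix is Hermitian in the chosen basis),
\[
\Tr\!\big((\Id-D_{[i]}^{(k)}\overline{M}_{[i],\bmu}(\bH(0)))^2\big)
=\int_X\!\!\int_X|P_{[i]}(x,y)|_{k\phi_{[i],\ell}}^2\,G_{[i]}^{(k)}(x)\,G_{[i]}^{(k)}(y)\,\mu(\bphi_\ell)\,\mu(\bphi_\ell),
\]
and the reproducing identities $\int_X|P_{[i]}(x,y)|_{k\phi_{[i],\ell}}^2\,\mu(\bphi_\ell)=B_{[i],\bmu}^{(k)}(\bphi_\ell)(x)$ and $\int_X B_{[i],\bmu}^{(k)}(\bphi_\ell)\,\mu(\bphi_\ell)=D_{[i]}^{(k)}$, together with two applications of Cauchy--Schwarz, bound the right-hand side by $\|G_{[i]}^{(k)}\|_{C^0}^2\,D_{[i]}^{(k)}$. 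Dividing by $D_{[i]}^{(k)}$, summing over $i$, and recalling $\cR^{(k)}(\bH(0))=\sum_i\tfrac1{D_{[i]}^{(k)}}\Tr\!\big((\Id-D_{[i]}^{(k)}\overline{M}_{[i],\bmu}(\bH(0)))^2\big)$ yields $\cR^{(k)}(\bH(0))=O(k^{-2\ell-4})$.

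The only step demanding genuine care is the middle one: carrying the uniform Bergman expansion of Theorem~\ref{asexp} through the nonlinear dependence of $\mu(\bpsi)$ on $\bpsi$, and keeping an honest count of the powers of $k^{-1}$ (in particular using the normalization $\int_X(\bar B_{[j],\bmu}^{(k)}(\bphi_\ell)-1)\,\mu(\bphi_\ell)=0$, which removes a would-be lower-order contribution to $G_{[i]}^{(k)}$). Everything else is the routine finite-dimensional linear algebra already carried out in Lemma~\ref{estop}, and parallels the $N=1$ case of \cite{Don01}.
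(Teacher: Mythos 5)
Your overall strategy matches the paper's: reduce to estimating the Toeplitz-type matrix $\Id-D_{[i]}^{(k)}\overline{M}_{[i],\bmu}(\bH(0))$, whose $(\alpha,\beta)$-entry you correctly identify as $\int_X(s_{[i],\alpha},s_{[i],\beta})_{k\phi_{[i],\ell}}\,G_{[i]}^{(k)}\,\mu(\bphi_\ell)$, and then square and trace. The reproducing-kernel computation you insert is an unobjectionable substitute for the paper's device of choosing an $H_{[i]}(0)$-orthonormal, $T_{[i],\bmu}^{(k)}(\bH(0))$-orthogonal basis along which the matrix is diagonal; both give $\Tr\bigl((\Id-D_{[i]}^{(k)}\overline{M}_{[i],\bmu}(\bH(0)))^2\bigr)\leq D_{[i]}^{(k)}\|G_{[i]}^{(k)}\|_{C^0}^2$.

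The step that does not hold is $\|G_{[i]}^{(k)}\|_{C^0}=O(k^{-\ell-2})$, which is precisely where the advertised exponent is decided. Write $\bar B_{[i],\bmu}^{(k)}(\bphi_\ell)=1+\epsilon_{[i]}$, so by the almost-balanced condition $\epsilon_{[i]}=O(k^{-\ell-1})$ (not $O(k^{-\ell-2})$), and write $\mu(\bFS^{(k)}(\bH(0)))/\mu(\bphi_\ell)=1+\delta$, so $\delta=O(k^{-\ell-2})$ by the argument you give. Then
\[
G_{[i]}^{(k)}\;=\;1-\frac{1+\delta}{1+\epsilon_{[i]}}\;=\;\frac{\epsilon_{[i]}-\delta}{1+\epsilon_{[i]}},
\]
and the $C^0$-norm of this is governed by the larger term, $\|\epsilon_{[i]}\|_{C^0}=O(k^{-\ell-1})$; the $\bar B_{[i]}$ in the denominator does not enter only at order $k^{-\ell-2}$. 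The normalization $\int_X(\bar B_{[i]}-1)\mu(\bphi_\ell)=0$ that you invoke kills only the zero mode: it gives $\int_X G_{[i]}^{(k)}\mu(\bphi_\ell)=O(k^{-2\ell-2})$ and hence $\Tr\bigl(\Id-D_{[i]}^{(k)}\overline{M}_{[i],\bmu}(\bH(0))\bigr)=O(D_{[i]}^{(k)}k^{-2\ell-2})$, but it does nothing to $\|G_{[i]}^{(k)}\|_{C^0}$ — a function with zero $\mu$-mean can still be of size $k^{-\ell-1}$ pointwise, and that is exactly the generic situation for $\epsilon_{[i]}$. So the chain of inequalities as you have written it only yields $\cR^{(k)}(\bH(0))=O(k^{-2\ell-2})$. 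For what it is worth, the paper's own proof computes the diagonal entries $1-\|s_{[i],\alpha}\|^2_{T_{[i],\bmu}^{(k)}(\bH(0))}$ to be $O(k^{-\ell-1})$ and then records $O(k^{-2\ell-4})$, so you have reproduced what appears to be an off-by-two in the exponent rather than introduced a new gap; the discrepancy is harmless for Theorem \ref{crconv}, which only requires $\ell$ to exceed an explicit threshold, but the intermediate bound you assert for $G_{[i]}^{(k)}$ is not justified and should be corrected (or the threshold on $\ell$ adjusted).
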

\begin{proof}
Let $(s_{[i],\a})$ be an $H_{[i]}(0)$-orthonormal and $T_{[i], \bmu}^{(k)}(\bH(0))$-orthogonal basis. Then
\[
\Id-D_{[i]}^{(k)} \overline{M}_{[i], \bmu}(\bH(0))=\diag(1-\|s_{[i],1}\|_{T_{[i], \bmu}^{(k)}(\bH(0))}^2,\ldots, 1-\|s_{[i], D_{[i]}^{(k)}} \|_{T_{[i], \bmu}^{(k)}(\bH(0))}^2),
\]
and each component is computed as
\begin{eqnarray*}
1-\|s_{[i],\a}\|_{T_{[i],\bmu}^{(k)}(H(0))}^2&=&\int_X \bigg[ 1-\frac{|s_{[i],\a}|_{k\phi_{[i], \ell}}^2}{\bar{B}_{[i], \bmu}^{(k)}(\bphi_\ell)} \bigg] \mu(\tilde{S}_{\bmu}^{(k)}(\bphi_\ell)) \\
&=& \int_X \bigg[ 1-\frac{|s_{[i],\a}|_{k\phi_{[i], \ell}}^2}{1+O(k^{-\ell-1})} \bigg] (1+O(k^{-\ell-2}))\mu(\bphi_\ell) \\
&=& O(k^{-\ell-1}),
\end{eqnarray*}
where we used $S_{[i], \bmu}^{(k)}(\bphi_\ell)=\phi_{[i],\ell}+O(k^{-\ell-2})$ and $\int_X |s_{[i],\a}|_{k\phi_{[i],\ell}}^2 \mu(\bphi_\ell)=1$. Thus we obtain the desired statement.
\end{proof}
For any sufficiently large integer $r$, we know that the sequence $\{\bomega_{\bphi_\ell} \}_{k=1,2,\ldots}$ has $R/2$-bounded geometry in $C^r$ for some $R>0$ from the construction. We seek constants $\d, C>0$ satisfying the properties in Lemma \ref{regularityofbf}:
\begin{lem} \label{cobf}
Assume $\ell>\frac{n}{2}$. Then for sufficiently large $k$, the balancing flow $\bH(t)$ starting from $\bH(0)=\bHilb_{\bmu}^{(k)}(\bphi_\ell)$ converges to a balanced metric $\bH^{(k)}$ with $\dist^{(k)}(\bH(0), \bH^{(k)})=O(k^{-\ell-1})$.
\end{lem}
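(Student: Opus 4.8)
\emph{Proof strategy.} The plan is to verify the two hypotheses of Lemma \ref{regularityofbf} for a carefully chosen pair of ($k$-dependent) constants $\d,C>0$, and then read off the distance estimate from its conclusion. The starting point is the observation that, by Proposition \ref{qdf}(1), the balancing flow \eqref{balflowr} is exactly the negative gradient flow of $\cD^{(k)}$ with respect to the Riemannian structure $\langle\cdot,\cdot\rangle_{\bH}$, and that $\cR^{(k)}(\bH)=\|\mathrm{grad}\,\cD^{(k)}|_{\bH}\|_{\bH}^2$. Consequently, along the flow one has the standard gradient-flow identity on a Riemannian symmetric space
\[
\ddt\cR^{(k)}(\bH(t))=-2\,\nabla^2\cD^{(k)}|_{\bH(t)}(\bA(t),\bA(t)),\qquad \bA(t):=\mathrm{grad}\,\cD^{(k)}|_{\bH(t)},
\]
where the covariant Hessian coincides with the second derivative along Bergman geodesics as in Proposition \ref{hessfqd}. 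So condition (2) of Lemma \ref{regularityofbf} reduces to a coercivity lower bound for $\nabla^2\cD^{(k)}$ along the flow.

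First I would establish that coercivity. As long as $\bH(t)$ stays in a ball $B_{\bH(0)}(C)$ with $C<ck^{-n/2-1}$ (the constant $c$ from Lemma \ref{bnhd}), each Fubini--Study metric $\omega_{\FS_{[i]}^{(k)}(H_{[i]}(t))}$ has $R$-bounded geometry: we know $\bomega_{\bphi_\ell}$ has $R/2$-bounded geometry by construction, the initial Fubini--Study metric differs from $\omega_{\phi_{[i],\ell}}$ by $O(k^{-\ell-2})$ by the Bergman kernel expansion (as in the proof of Lemma \ref{estop}), and Lemma \ref{estop} supplies the hypothesis $\|\overline{M}_{[i],\bMA}(\bH(0))\|_{\op}=O(k^{-n})$ needed to invoke Lemma \ref{bnhd} componentwise. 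Granted $R$-bounded geometry, I would feed the Hessian lower bound \eqref{ehdima} into Lemma \ref{projest}: by \eqref{projs}--\eqref{projt} one has $\|\xi_{A_{[i]}}\|_{L^2}^2\le(1+k/c')\|\xi_{A_{[i]}}^\bot\|_{L^2}^2$, hence by \eqref{projo}, for $k$ large, $\|\xi_{A_{[i]}}^\bot\|_{L^2}^2\ge c_1\|A_{[i]}\|_{H_{[i]}}^2$ with $c_1>0$ uniform; summing over $i$ and inserting into \eqref{ehdima} gives
\[
\nabla^2\cD^{(k)}|_{\bH(t)}(\bA,\bA)\ge c_1k^{-1}\|\bA\|_{\bH(t)}^2=c_1k^{-1}\cR^{(k)}(\bH(t)).
\]
Thus $\ddt\cR^{(k)}(\bH(t))\le-\d\,\cR^{(k)}(\bH(t))$ with $\d:=2c_1k^{-1}$, which is condition (2).

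Next I would pin down $C$. From the preceding lemma, $\cR^{(k)}(\bH(0))=O(k^{-2\ell-4})$, so $4\cR^{(k)}(\bH(0))^{1/2}/\d=O(k^{-\ell-1})$. Since $\ell>n/2$, for $k$ large the window $4\cR^{(k)}(\bH(0))^{1/2}/\d\le C<ck^{-n/2-1}$ is non-empty (because $k^{-\ell-1}=o(k^{-n/2-1})$), so one may take, say, $C$ of size a constant times $k^{-\ell-1}$; this choice makes condition (1), $\cR^{(k)}(\bH(0))\le\frac1{16}\d^2C^2$, hold. Lemma \ref{regularityofbf} then yields that the balancing flow starting from $\bHilb_{\bmu}^{(k)}(\bphi_\ell)$ exists for all time and converges to a balanced metric $\bH^{(k)}$ with
\[
\dist^{(k)}(\bH(0),\bH^{(k)})\le\frac{2}{\d}\cR^{(k)}(\bH(0))^{1/2}=O(k^{-\ell-1}),
\]
as claimed (uniqueness of $\bH^{(k)}$ being automatic since $G$ is trivial). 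The hard part is the Hessian coercivity along the flow: one has to track the exact power of $k$ produced by \eqref{ehdima} together with \eqref{projt} (a single factor $k^{-1}$), and this is then just enough to beat the $O(k^{-2\ell-4})$ smallness of the initial gradient so that the orbit never leaves the ball on which $R$-bounded geometry --- hence the Hessian estimate itself --- remains valid; the condition $\ell>n/2$ is precisely what makes these two requirements compatible.
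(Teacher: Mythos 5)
Your proposal is correct and follows essentially the same strategy as the paper's proof: establish $R$-bounded geometry on a small ball via Lemma \ref{bnhd} and Lemma \ref{estop}, obtain a coercivity bound $\ddt\cR^{(k)}\le -ck^{-1}\cR^{(k)}$ from the Hessian of $\cD^{(k)}$ together with Lemma \ref{projest}, and then feed the constants $\delta\sim k^{-1}$, $C\lesssim k^{-n/2-1}$, and $\cR^{(k)}(\bH(0))=O(k^{-2\ell-4})$ into Lemma \ref{regularityofbf}, where $\ell>n/2$ is exactly what makes condition (1) compatible with the window on which the Hessian estimate is available. The only variation is in how the Hessian coercivity is obtained. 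The paper's own proof invokes the full formula of Proposition \ref{hessfqd}, discards the $\cP$-term by its non-positivity, and then needs several measure comparisons via Remark \ref{unifk} to trade $\mu(\bFS^{(k)}(\bH))$ for $\MA(\FS^{(k)}_{[i]}(H_{[i]}))$ before Lemma \ref{projest} can be applied. You instead go directly from the inequality \eqref{ehdima}, which already has the right measure, so those intermediate comparisons are not needed; the paper's remark after Proposition \ref{hessfqd} explicitly endorses this shortcut for the proof of Lemma \ref{cobf}. Both routes produce the same power $k^{-1}$ in the exponential decay rate, and your bookkeeping of the constants and exponents matches the paper's.
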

\begin{proof}
In what follows, let $c_j>0$ ($j=1,2,\ldots$) be uniform constants depending only on $R$ (but not on $k$). By Lemma \ref{bnhd} and Lemma \ref{estop}, we also know that there exists a uniform constant $c_1>0$ such that for any $\bH(t) \in B_{\bH(0)}(c_1k^{-\frac{n}{2}-1})$, the corresponding K\"ahler forms $\bomega_{\bFS^{(k)}(\bH(t))}$ has $R$-bounded geometry in $C^{r-2}$. Now we apply the Hessian formula for $\cD^{(k)}$ (\cf Proposition \ref{hessfqd}):
\begin{eqnarray*}
\ddt \cR^{(k)}(\bH) &=& -\frac{d^2}{dt^2}\cD^{(k)}(\bH)\\
&=& -2\nabla^2 \cD^{(k)}|_{\bH}(\dot{\bH},\dot{\bH}) \\
&=& -2k^{-1} \sum_i \int_X |\xi_{\dot{H}_{[i]}}^\bot|_{\aFS(H_{[i]})}^2 \mu(\bFS^{(k)}(\bH))\\
&+& 2k^{-2}(\cP_{\bFS^{(k)}(\bH)} (h_{\dot{H}_{[i]}}), (h_{\dot{H}_{[i]}}))_{\bmu(\bFS^{(k)}(\bH))},
\end{eqnarray*}
where we used the fact that $\bH$ is the gradient flow of $\cD^{(k)}$ in the first and second equality. We note that in general, if $(M,g)$ is a Riemannian manifold and $\g(t)$ is the gradient flow of $F \in C^\infty(X;\R)$, a direct tensor computation shows that
\[
\frac{d^2}{dt^2} F(\g(t))=-2g(\nabla_{\dot{\g}} \nabla F, \nabla F)=2\nabla F(\nabla_{\dot{\g}} \dot{\g})=2(\nabla^2 F)(\dot{\g},\dot{\g}).
\]
The operator $\cP_{\bFS^{(k)}(\bH)}$ is non-positive by Proposition \ref{wlaplacian}. On the other hand, applying Remark \ref{unifk} and Lemma \ref{projest} \footnote{We need the triviality of $G$ to use \eqref{projt}.}, we have
\begin{eqnarray*}
\int_X |\xi_{\dot{H}_{[i]}}^\bot|_{\aFS(H_{[i]})}^2 \mu(\bFS^{(k)}(\bH)) &\geq& c_2 \int_X |\xi_{\dot{H}_{[i]}}^\bot|_{\aFS(H_{[i]})}^2 \mu({\bf 0})\\
&\geq& c_3 \int_X |\xi_{\dot{H}_{[i]}}^\bot|_{\aFS(H_{[i]})}^2 \MA(0_{[i]})\\
&\geq& c_4 \int_X |\xi_{\dot{H}_{[i]}}^\bot|_{\aFS(H_{[i]})}^2 \MA(\FS_{[i]}^{(k)}(H_{[i]}))\\
&\geq& c_5 \|\dot{H}_{[i]}\|_{H_{[i]}}^2
\end{eqnarray*}
for any $\bH(t) \in B_{\bH(0)}(c_1k^{-\frac{n}{2}-1})$. Hence we can find a uniform constant $c_6>0$ such that
\[
\ddt \cR^{(k)}(\bH) \leq -c_6 k^{-1} \cR^{(k)}(\bH)
\]
for $\bH(t) \in B_{\bH(0)}(c_1k^{-\frac{n}{2}-1})$. Since $\cR^{(k)}(\bH(0))=O(k^{-2\ell-4})$, by taking $\ell$ so that $\ell>\frac{n}{2}$, we have
\[
\cR^{(k)}(\bH(0)) \leq \frac{1}{16} (\d')^2 (C')^2
\]
for sufficiently large $k$ with $C':=c_1k^{-\frac{n}{2}-1}$ and $\d':=c_6 k^{-1}$. Thus by Lemma \ref{regularityofbf}, we obtain convergence of balancing flow to the balanced metric $\bH^{(k)}=(H_{[i]}^{(k)})$ with an estimate
\[
\dist^{(k)}(\bH(0), \bH^{(k)})=O(k^{-\ell-1}).
\]
\end{proof}
Now we are ready to prove Theorem \ref{crconv}.
\begin{proof}[Proof of Theorem \ref{crconv}]
From Lemma \ref{cobf} and Lemma \ref{bnhd}, we find that $\omega_{\FS_{[i]}^{(k)}(H_{[i]}^{(k)})}$ has $R$-bounded geometry in $C^{r-2}$. Combining with Lemma \ref{opop}, we also know that $\|\overline{M}_{[i],\bMA}(\bH^{(k)})\|_{\op}=O(k^{-n})$ as long as $\ell>\frac{n}{2}$. Then it follows from Lemma \ref{cKL} that after scaling metrics, we have
\[
\bigg\|\omega_{\FS_{[i]}^{(k)}(H_{[i]}(0))}-\omega_{\FS_{[i]}^{(k)}(H_{[i]}^{(k)})} \bigg\|_{C^{r-4}}=O(k^{\frac{3}{2}n+\frac{r}{2}-\ell-2}).
\]
On the other hand, by using the (uniform) asymptotic expansion of Bergman kernel (\cf Theorem \ref{asexp}), we know that $\bomega_{\bFS^{(k)}(\bH(0))}$ converges to the CKE metric $\bomega_{\CKE}$ in the $C^\infty$-topology. Therefore if $\ell>\frac{3}{2}n+\frac{r}{2}-2$, then we obtain
\[
\bomega_{\bFS^{(k)}(\bH^{(k)})} \to \bomega_{\CKE}
\]
in the $C^{r-4}$-topology. Finally, the balanced metric $\bH^{(k)}$ obtained as the limit point of the flow is independent of $\ell$ chosen to define $\bH(0)$. Then as in \cite[page 511]{Don01}, we can obtain the $C^\infty$-convergence by the uniqueness of balanced metrics since $G$ is trivial (\cf Proposition \ref{qdf} (3)). Indeed, the above argument shows that for any integer $r$, there exists $k_r$ such that the balanced metric $\bomega_{\bFS^{(k)}(\bH^{(k)})}$ exists for all $k \geq k_r$ and $\bomega_{\bFS^{(k)}(\bH^{(k)})}$ converges to $\bomega_{\CKE}$ in $C^r$. We write this sequence as $\{\bomega_{k,r}\}_{k \geq k_r}$ Then we use the uniqueness result to get $\bomega_{k,r}=\bomega_{k,r+j}$ for all $j \in \N$ and $k \geq \max\{k_r,k_{r+j}\}$, which yields the $C^\infty$-convergence. This completes the proof.
\end{proof}
%==============References==========================
\newpage

\end{document}